\documentclass[12pt]{amsart}
\usepackage{amssymb,amsmath}
\usepackage{geometry}    
\usepackage{mathrsfs}

\usepackage{vmargin}
\setmargrb{1in}{1in}{1in}{1in}

\newtheorem{theorem}{Theorem}[section]
\newtheorem{lemma}[theorem]{Lemma}
\newtheorem{proposition}[theorem]{Proposition}
\newtheorem{corollary}[theorem]{Corollary}
\theoremstyle{definition}
\newtheorem{definition}[theorem]{Definition}
\newtheorem{remark}[theorem]{Remark}

\newtheorem{conjecture}[theorem]{Conjecture}
\newtheorem{problem}[theorem]{Problem}

\setlength{\parskip}{\medskipamount}

	\begin{document}

\title{On the spectrum of Diophantine approximation constants}

\author{Johannes Schleischitz} 

\address{Institute of Mathematics, Univ. Nat. Res. Life Sci. Vienna, Austria}

\begin{abstract}
The approximation constant $\lambda_{k}(\zeta)$ is defined as the supremum of 
$\eta\in{\mathbb{R}}$ such that the estimate $\max_{1\leq j\leq k}\Vert \zeta^{j}x\Vert\leq x^{-\eta}$ 
has infinitely many integer solutions $x$. Here $\Vert.\Vert$ denotes the distance
to the closest integer.
We establish a connection on the joint spectrum $(\lambda_{1}(\zeta),\lambda_{2}(\zeta),\ldots)$,
which will lead to various
improvements of known results on the individual spectrum of the approximation constants $\lambda_{k}(\zeta)$
as well. In particular, for given $k\geq 1$ and $\lambda\geq 1$, we
construct $\zeta$ in the Cantor set with $\lambda_{k}(\zeta)=\lambda$. 
Moreover, we establish an estimate for the uniform approximation constants 
$\widehat{\lambda}_{k}(\zeta)$, which enables us to determine 
classical approximation constants for Liouville numbers.
\end{abstract}

\maketitle

{\footnotesize{Supported by the Austrian Science Fund FWF grant P24828.} \\

{\em Keywords}: Diophantine approximation, approximation constants, Hausdorff dimension, continued fractions \\
Math Subject Classification 2010: 11H06, 11J13, 11J25, 11J82, 11J83}

\vspace{4mm}

\section{Introduction and main results}  

\subsection{Definition of the constants}  \label{intro}
We begin with the definition of the quantities $\lambda_{k}(\zeta)$
that we will predominately consider and their uniform versions $\widehat{\lambda}_{k}(\zeta)$.
For a vector $\underline{\zeta}=(\zeta_{1},\ldots,\zeta_{k})\in{\mathbb{R}^{k}}$ 
we denote by $\lambda_{k}(\underline{\zeta})$ and $\widehat{\lambda}_{k}(\underline{\zeta})$, respectively, 
the supremum of $\eta\in{\mathbb{R}}$ such that the system 
\begin{equation}  \label{eq:lambda}
\vert x\vert \leq X, \qquad 0<\max_{1\leq j\leq k} \vert \zeta_{j}x-y_{j}\vert \leq X^{-\eta},  
\end{equation}
has a solution $(x,y_{1},y_{2},\ldots, y_{k})\in{\mathbb{Z}^{k+1}}$ 
for arbitrarily large $X$ and for all $X\geq X_{0}$, respectively. 
Furthermore, let
\[
\lambda_{k}(\zeta):=\lambda_{k}(\zeta,\zeta^{2},\ldots,\zeta^{k}), \qquad
\widehat{\lambda}_{k}(\zeta):=\widehat{\lambda}_{k}(\zeta,\zeta^{2},\ldots,\zeta^{k}).
\]
In the special case $\zeta\in{\mathbb{Q}}$, it is not difficult to see that due to the 
non-vanishing condition in \eqref{eq:lambda}, we have 
$\lambda_{k}(\zeta)=\widehat{\lambda}_{k}(\zeta)=0$ for all $k\geq 1$.
If $\zeta$ is algebraic of degree $d\geq 2$, then 
$\lambda_{k}(\zeta)=\widehat{\lambda}_{k}(\zeta)=\max\{1/k,1/(d-1)\}$
is a consequence of the Schmidt~Subspace~Theorem~\cite{subspace}, as pointed out in~\cite{bug}.
Assuming otherwise that $\zeta$ is not algebraic of degree $\leq k$,
Dirichlet's box principle implies 
\begin{equation} \label{eq:dirichlet}
\frac{1}{k}\leq \widehat{\lambda}_{k}(\zeta)\leq \lambda_{k}(\zeta)\leq \infty.
\end{equation}
In fact, \eqref{eq:dirichlet} holds for any irrational real number $\zeta$.
The analogue result holds for
$\underline{\zeta}\in{\mathbb{R}^{k}}$ for which $\{1,\zeta_{1},\zeta_{2},\ldots,\zeta_{k}\}$
is $\mathbb{Q}$-linearly independent as well. However, 
the remaining results of Section~\ref{intro} require the assumption 
$\underline{\zeta}=(\zeta,\zeta^{2},\ldots,\zeta^{k})$, and some
cannot be defined in the general case anyway. 

It follows from the definitions that for any $\zeta\in{\mathbb{R}}$ we have the chains of inequalities
\begin{eqnarray} 
&\cdots&\leq \lambda_{3}(\zeta)\leq \lambda_{2}(\zeta)\leq \lambda_{1}(\zeta), \label{eq:monoton}      \\
&\cdots&\leq \widehat{\lambda}_{3}(\zeta)\leq \widehat{\lambda}_{2}(\zeta)\leq \widehat{\lambda}_{1}(\zeta). 
\label{eq:monodon}
\end{eqnarray}
It is not hard to construct $\zeta$ such that 
the asymptotic constants $\lambda_{k}(\zeta)$ take the value $\infty$
(which for a given $\zeta$ holds for all $k$ simultaneously or for none by Corollary~2 in~\cite{bug}),
which one may deduce from Theorem~\ref{meinsatz} in Section~\ref{deandre}. 
The uniform constants, on the other hand, can be effectively bounded.
An elementary result by Khintchine~\cite{khintchine} implies that for any irrational $\zeta$,
the one-dimensional uniform approximation constant is given as
\begin{equation}  \label{eq:uniformity}
\widehat{\lambda}_{1}(\zeta)=1.
\end{equation}
For $k=2$ and $\zeta$ neither rational nor quadratic irrational, we have
\[
\widehat{\lambda}_{2}(\zeta)\leq  \frac{\sqrt{5}-1}{2},
\]
and this constant is optimal. Equality holds for so-called extremal numbers
that can be explicitly constructed. See~\cite{roy},~\cite{royy},~\cite{royyy}.
As usual $\lceil \alpha\rceil$ denotes
the smallest integer greater or equal than $\alpha\in{\mathbb{R}}$.
For any $k\geq 2$ and $\zeta$ not algebraic of degree $\leq \lceil k/2\rceil$, the upper bound
\begin{equation}  \label{eq:beides}
\widehat{\lambda}_{k}(\zeta)\leq \frac{1}{\left\lceil \frac{k}{2}\right\rceil}
\end{equation}
is known, which is a slight refinement due to Laurent~\cite{laurent} of a slightly weaker result
established in~\cite{schmidt1969}. A refinement has been made for $k=3$ by Roy~\cite{roy2}. 
The bound in \eqref{eq:beides} is not considered to be optimal for any $k\geq 2$. 

\subsection{Problems from \cite{bug} and partial results} \label{partres}
We now introduce the problems stated as Problem~1, Problem~2, and Problem~3 
in~\cite{bug}, upon which we will focus.

\begin{problem}  \label{problem1}
Let $k$ be a positive integer. Is the spectrum of the function $\lambda_{k}$ 
equal to $[1/k,\infty]$?
\end{problem}

\begin{problem}  \label{problem2}
Let $k$ be a positive integer and $\lambda\geq 1/k$. Determine the Hausdorff dimensions
\begin{equation} \label{eq:abc}
\dim(\{\zeta\in{\mathbb{R}}: \lambda_{k}(\zeta)=\lambda\}), 
\quad \dim(\{\zeta\in{\mathbb{R}}: \lambda_{k}(\zeta)\geq \lambda\}).
\end{equation}
\end{problem}

Concerning the third problem one needs to know that 
\begin{equation} \label{eq:holds}
\lambda_{kn}(\zeta)\geq \frac{\lambda_{n}(\zeta)-k+1}{k}, \qquad n\geq 1, k\geq 1,
\end{equation}
which was proved by~Bugeaud~\cite[Lemma1]{bug}. 

\begin{problem} \label{problem3}
Let $(\lambda_{k})_{k\geq 1}$ be a sequence of non-increasing real numbers satisfying
\[
\lambda_{k}\geq \frac{1}{k}, \qquad \quad k\geq 1,
\]
and 
\[
\lambda_{kn}\geq \frac{\lambda_{n}-k+1}{k}, \qquad n\geq 1, k\geq 1.
\]
Does there exist a real number $\zeta$ with $\lambda_{k}(\zeta)=\lambda_{k}$ for all $k\geq 1$?
\end{problem}

Next we recall some notes and known partial results on the problems that can 
be found in~\cite{bug} as well. We remark the fact that in these results
both sides in \eqref{eq:abc} will 
coincide, which is not explicitly mentioned in~\cite{bug}. 
This can be inferred from the original results
with a standard argument on dimension functions.   

A positive answer to Problem~\ref{problem1} has been established only for $k\in{\{1,2\}}$. 
In this case it is a consequence of the subsequent results we quote.
Until now, for general $k$, it has been shown in~\cite[Theorem~2]{bug}
that the spectrum contains the interval $[1,\infty]$. Nevertheless, we will not improve Problem~\ref{problem1}.
However, our methods will allow for improving~\cite[Theorem~BL]{bug} 
on the spectrum of $\lambda_{k}(\zeta)$
among $\zeta$ restricted to Cantor's middle third set, see Section~\ref{cantorset}.

We turn to the metrical results. For $k=1$, the following result 
concerning Problem~\ref{problem2} is due to Jarn\'ik~\cite{jar}.

\begin{theorem}[Jarn\'ik]  \label{vjarnik}
Let $k=1$ and $\lambda\geq 1$ be a real number. Then both sides 
in \eqref{eq:abc} equal $2/(1+\lambda)$.
\end{theorem}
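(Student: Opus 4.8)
\emph{Proof idea.} The plan is to recognise that for $k=1$ the quantity $\lambda_{1}(\zeta)$ is the classical exponent of rational approximation, so that the assertion is the Jarn\'ik--Besicovitch theorem, and then to prove it by the usual two-sided scheme: an elementary covering for the upper bound, and a Cantor set together with the mass distribution principle for the lower bound.

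First I would record that $\lambda_{1}(\zeta)=\sup\{\eta:\Vert q\zeta\Vert\le q^{-\eta}\text{ for infinitely many }q\in{\mathbb{N}}\}$ and, using that this condition is translation invariant and that Hausdorff dimension is countably stable, reduce both sets in \eqref{eq:abc} to subsets of $[0,1]$. Since $\{\zeta:\lambda_{1}(\zeta)=\lambda\}\subseteq\{\zeta:\lambda_{1}(\zeta)\ge\lambda\}$, it is enough to prove $\dim\{\zeta\in[0,1]:\lambda_{1}(\zeta)\ge\lambda\}\le 2/(1+\lambda)$ and $\dim\{\zeta\in[0,1]:\lambda_{1}(\zeta)=\lambda\}\ge 2/(1+\lambda)$; the two quantities in \eqref{eq:abc} then coincide automatically. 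For the upper bound, fix $\eta<\lambda$; every $\zeta$ with $\lambda_{1}(\zeta)\ge\lambda$ lies in $\limsup_{q}\bigcup_{0\le p\le q}B(p/q,q^{-1-\eta})$, and for $s>2/(1+\eta)$ the series $\sum_{q\ge Q}q\,(2q^{-1-\eta})^{s}$ converges, so this $\limsup$ set has zero $s$-dimensional Hausdorff measure and hence dimension at most $2/(1+\eta)$; letting $\eta\uparrow\lambda$ finishes this direction.

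The heart of the matter is the lower bound, and for $\lambda>1$ I would construct $K=\bigcap_{n\ge0}E_{n}\subset[0,1]$ together with a probability measure $\mu$ on $K$, using two kinds of generations. At a \emph{branching} generation, inside each current interval $I$ of length $\asymp Q^{-2}$ centred at a rational of denominator $\asymp Q$ I keep the $\asymp Q^{2\sigma}$ rationals of denominator in $[Q^{1+\sigma},2Q^{1+\sigma}]$ that lie in $I$ and are $\gg(Q^{1+\sigma})^{-2}$-separated, replace $I$ by the length-$\asymp(Q^{1+\sigma})^{-2}$ intervals around them, and split $\mu$ equally; a direct computation gives $\mu(E_{n})\asymp(\text{scale})^{-2}$ along a run of such generations, so they carry local dimension arbitrarily close to $1$, while the denominators they produce only grow at rate $1+\sigma\le\lambda$. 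At a sparse subsequence of \emph{witness} generations I instead replace the current interval $I$ (length $\asymp Q^{-2}$, centred at $p/q$ with $q\asymp Q$) by the single annulus $\{\zeta:\tfrac12 q^{-1-\lambda}\le|\zeta-p/q|\le q^{-1-\lambda}\}$, with no branching; then $\Vert q\zeta\Vert\asymp q^{-\lambda}$ for all $\zeta\in K$ at that step, so $p/q$ is a convergent, the next convergent has denominator $\asymp q^{\lambda}$, and I let the following branching generation start from that scale. In this way every $\zeta\in K$ satisfies $\limsup_{m}\log q_{m+1}(\zeta)/\log q_{m}(\zeta)=\lambda$ -- the value $\lambda$ being attained exactly at the witness generations and the ratios dropping back to $\le\lambda$ in between -- whence $\lambda_{1}(\zeta)=\lambda$ and $K\subseteq\{\zeta:\lambda_{1}(\zeta)=\lambda\}$. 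Finally a routine multi-scale estimate, splitting according to whether $r$ lies below, between, or above consecutive construction scales, yields $\mu(B(\zeta,r))\lesssim r^{2/(1+\lambda)}$ for all small $r$: at a witness scale $r\asymp q^{-1-\lambda}$ one has $\mu(B(\zeta,r))\asymp q^{-2}=r^{2/(1+\lambda)}$, the binding case, while at all other scales $\mu(B(\zeta,r))\lesssim r$, which is stronger since $2/(1+\lambda)\le1$; the mass distribution principle then gives $\dim K\ge2/(1+\lambda)$. (The case $\lambda=1$ is the classical full-measure case, since $\{\zeta:\lambda_{1}(\zeta)>1\}$ is Lebesgue-null.)

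The main obstacle is exactly this tension in the lower bound: a rapidly growing sequence of denominators is needed to reach dimension $2/(1+\lambda)$, but fast growth of the convergent denominators forces $\lambda_{1}(\zeta)=\infty$ rather than $\lambda$. The interlacing above is designed to resolve this, and the delicate points are to verify that the witness generations are the only place where dimension is spent, that they may be taken sparse enough not to pull $\dim K$ below $2/(1+\lambda)$, and that no integer $q$ lying between two construction scales produces $\Vert q\zeta\Vert<q^{-\lambda-\varepsilon}$, which is where one invokes the elementary inequalities $\Vert q\zeta\Vert\ge\Vert q_{m}\zeta\Vert\asymp q_{m+1}^{-1}$ valid for $q_{m}\le q<q_{m+1}$.
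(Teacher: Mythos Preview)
The paper does not prove this theorem; it is stated as a classical result of Jarn\'ik with the citation~\cite{jar} and then used as input, notably in the proof of Corollary~\ref{ergebnis}. There is therefore nothing in the paper to compare your argument against.

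Your sketch is the standard Jarn\'ik--Besicovitch argument and is sound in outline. The covering upper bound is routine and correct. For the lower bound, interlacing dense branching generations (carrying local dimension $1$) with sparse witness generations (which pin $\lambda_{1}(\zeta)$ to $\lambda$ and give the binding mass estimate $\mu(B(\zeta,r))\asymp r^{2/(1+\lambda)}$) is exactly the right mechanism and is close to Jarn\'ik's original construction. One notational slip: where you write ``$\mu(E_{n})\asymp(\text{scale})^{-2}$'' you presumably mean $\mu(I)\asymp|I|$ for each surviving interval $I$ at that generation, since $\mu(E_{n})=1$ throughout.

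Two points would need attention in a complete write-up. First, after a witness step you sit in an annulus at scale $q^{-1-\lambda}$, but the next branching step wants intervals of length $\asymp(q^{\lambda})^{-2}=q^{-2\lambda}$ centred at rationals of denominator $\asymp q^{\lambda}$; the passage between these two scales costs a bounded factor in the mass each time, and it is precisely the sparseness of the witness steps that makes this loss disappear in the final H\"older exponent --- this should be made explicit. Second, to conclude $\lambda_{1}(\zeta)\le\lambda$ you must control \emph{all} convergents of $\zeta$, not only the rationals appearing at the construction scales; this follows from the separation of children at each branching step (which bounds the intermediate partial quotients), but it deserves a sentence rather than being folded into the last line. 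Neither point is a gap in the strategy.
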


For $k\geq 2$ the following is known~\cite{bu}.

\begin{theorem}[Budarina, Dickinson, Levesley]  \label{verbessern}
Let $k\geq 2$ be an integer and $\lambda\geq k-1$. Then both sides 
in \eqref{eq:abc} equal $2/(k(1+\lambda))$.
\end{theorem}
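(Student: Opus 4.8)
The plan is to prove Theorem~\ref{verbessern} by combining an upper bound coming from a standard covering argument with a matching lower bound coming from an explicit construction of numbers $\zeta$ with prescribed approximation behavior. Since the theorem asserts that both the level set $\{\lambda_{k}(\zeta)=\lambda\}$ and the super-level set $\{\lambda_{k}(\zeta)\geq\lambda\}$ have Hausdorff dimension exactly $2/(k(1+\lambda))$, and since the level set is contained in the super-level set, it suffices to establish (i) $\dim\{\zeta:\lambda_{k}(\zeta)\geq\lambda\}\leq 2/(k(1+\lambda))$ and (ii) $\dim\{\zeta:\lambda_{k}(\zeta)=\lambda\}\geq 2/(k(1+\lambda))$; the standard argument on dimension functions alluded to in the text then squeezes everything in between.

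For the upper bound (i), first I would observe that if $\lambda_{k}(\zeta)\geq\lambda$ then, by the chain \eqref{eq:monoton} applied through $\lambda_{1}(\zeta)\geq\lambda_{k}(\zeta)$, the number $\zeta$ is very well approximable, but more usefully one translates the condition $\max_{1\le j\le k}\|\zeta^{j}x\|\le x^{-\eta}$ for $\eta$ slightly below $\lambda$ into a statement that $\zeta$ lies close to an algebraic number of degree $\le k$ of bounded height, or more elementarily that $(\zeta,\ldots,\zeta^{k})$ lies in a thin neighborhood of a rational hyperplane configuration. Concretely, for each solution $x$ the conditions $|\zeta^{j}x-y_{j}|\le x^{-\eta}$ for $j=1,\dots,k$ confine $\zeta$ to an interval of length $\asymp x^{-1-\eta}$ (from $j=1$) intersected with the constraint that the $y_{j}$ are consistent with powers; carrying out the interval count for each $x\le X$ and summing $x$ over a dyadic range, one gets a natural cover whose $s$-dimensional mass is controlled when $s>2/(k(1+\lambda))$. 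The exponent $2$ in the numerator is the familiar gain from the fact that for the power curve one effectively picks up an extra factor of $x^{-1}$ beyond the trivial $x^{-\eta}$, i.e. the relevant box has volume $\asymp x^{-(1+\eta)}$ in the $\zeta$-line after accounting for $k$ coordinates scaling like $x^{-k\eta}$ spread over $\asymp x$ choices of $x$ and the curve parametrization; a Borel–Cantelli / natural cover argument then yields the bound. I would model this closely on Jarník's classical proof of Theorem~\ref{vjarnik} and on the metrical machinery for $V_{n}$-numbers, citing the relevant lemmas rather than reproving them.

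For the lower bound (ii), the plan is to construct, for a suitable Cantor-type subset of parameters, real numbers $\zeta$ with $\lambda_{k}(\zeta)=\lambda$ exactly. Here the hypothesis $\lambda\ge k-1$ is essential and is exactly what makes \eqref{eq:holds} non-obstructive: one builds $\zeta$ so that $\lambda_{1}(\zeta)=k\lambda+k-1$, whence \eqref{eq:holds} with $n=1$ gives $\lambda_{k}(\zeta)\ge\lambda$, and the condition $\lambda\ge k-1$ guarantees $k\lambda+k-1\ge 1$ so that such a $\lambda_{1}$-value is admissible (it is $\ge 1=\widehat\lambda_{1}$). The construction proceeds by choosing a rapidly growing sequence of denominators $q_{m}$ (continued fraction partial quotients, or a direct nested-interval construction) with $q_{m+1}\asymp q_{m}^{k\lambda+k-1}$, and one forces the approximation by $p_{m}/q_{m}$ to be the only source of good approximation so that $\lambda_{1}(\zeta)$ equals exactly $k\lambda+k-1$ and not more; the powers $\zeta^{j}$ inherit approximations with denominator $q_{m}$ and quality governed by $\lambda$, and one checks $\lambda_{k}(\zeta)\le\lambda$ by showing no better simultaneous approximation of $(\zeta,\dots,\zeta^{k})$ can occur (here one uses that $\zeta$ is not algebraic of degree $\le k$, a gap/transference argument, and the explicit form of the construction). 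Running this construction over a parameter family — e.g. freedom in choosing the numerators $p_{m}$ in a set of size $\asymp q_{m}^{\,2/(k(1+\lambda))}$ at each step — and applying the mass distribution principle (Frostman) yields a Cantor set of the required Hausdorff dimension $2/(k(1+\lambda))$ sitting inside $\{\lambda_{k}(\zeta)=\lambda\}$.

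The main obstacle, and the place where the real work lies, is the matching lower bound on the dimension together with the \emph{exactness} $\lambda_{k}(\zeta)=\lambda$: it is comparatively easy to get $\lambda_{k}(\zeta)\ge\lambda$ on a set of the right dimension via \eqref{eq:holds}, but proving the reverse inequality $\lambda_{k}(\zeta)\le\lambda$ for every $\zeta$ in the constructed Cantor set requires controlling \emph{all} integer solutions $x$ to \eqref{eq:lambda}, not just the distinguished denominators $q_{m}$, and ruling out anomalously good approximations to the full vector of powers. This is exactly where the restriction $\lambda\ge k-1$ does its work, keeping the denominator growth large enough that the "junk" between consecutive $q_{m}$ cannot produce an approximation exponent exceeding $\lambda$; I expect the technical heart of the proof to be a lemma quantifying the approximation quality of $(\zeta,\zeta^{2},\dots,\zeta^{k})$ at an arbitrary denominator in terms of its position relative to the sequence $(q_{m})$, combined with the elementary fact $\widehat\lambda_{1}(\zeta)=1$ from \eqref{eq:uniformity} to bound the gaps.
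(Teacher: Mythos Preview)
This theorem is not proved in the paper at all: it is quoted from~\cite{bu} as a known result. What the paper \emph{does} do is re-derive and strengthen it in Corollary~\ref{ergebnis}, where the hypothesis is relaxed from $\lambda\ge k-1$ to $\lambda>1$. That proof is two lines: Theorem~\ref{spektrum} gives the set identity
\[
\{\zeta:\lambda_{k}(\zeta)=\lambda\}=\{\zeta:\lambda_{1}(\zeta)=k\lambda+k-1\}
\]
(and likewise for $\geq$), and then Jarn\'ik's Theorem~\ref{vjarnik} computes the dimension of the right-hand side as $2/(1+(k\lambda+k-1))=2/(k(1+\lambda))$. No separate covering argument and no bespoke Cantor-set construction are needed.

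Your lower-bound plan is in spirit the same reduction to $\lambda_{1}$, but the place you flag as ``the main obstacle''---showing $\lambda_{k}(\zeta)\le\lambda$ for the constructed $\zeta$---is precisely the content of Theorem~\ref{spektrum}, and you do not indicate any mechanism for it. The paper's mechanism is Lemma~\ref{lemma2}: if $\max_{1\le j\le k}\Vert\zeta^{j}x\Vert$ is smaller than $C_{0}x^{-1}$, then $x$ must be divisible by $x_{0}^{k}$ where $y_{0}/x_{0}$ is the relevant convergent of $\zeta$; this forces the simultaneous exponent back down to $(\lambda_{1}(\zeta)-k+1)/k$. Your sketch (``a gap/transference argument'', ``junk between consecutive $q_{m}$'') does not supply this divisibility idea, and without it the reverse inequality is genuinely open in your outline.

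Two further points. First, your explanation of the hypothesis $\lambda\ge k-1$ is wrong: you claim it is needed so that $k\lambda+k-1\ge 1$, but for any $\lambda\ge 1/k$ one already has $k\lambda+k-1\ge k\ge 2$, so that inequality is vacuous. In the paper's version the only restriction that actually matters is $\lambda>1$, needed so that Theorem~\ref{spektrum} applies; the stronger bound $\lambda\ge k-1$ is an artifact of the different method in~\cite{bu}. Second, your upper-bound sketch is too loose to be a proof: the claim that ``the relevant box has volume $\asymp x^{-(1+\eta)}$'' with a gain coming from ``$k$ coordinates scaling like $x^{-k\eta}$'' is not how the factor $k$ enters (it enters through the reduction to $\lambda_{1}$, or in~\cite{bu} through a polynomial-root-separation argument), and as written the counting does not produce the exponent $2/(k(1+\lambda))$.
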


Problem~\ref{problem2} for the sets in \eqref{eq:abc} was solved for $k=2$ as well by
Beresnevich, Dickinson, Vaughan and Velani~\cite{bere},~\cite{vel}. Indeed,
in the case $1/2\leq \lambda< 1$ not contained in Theorem~\ref{verbessern},
they prove that
\begin{equation} \label{eq:bb}
\dim(\{\zeta\in{\mathbb{R}}: \lambda_{2}(\zeta)= \lambda\})= 
\dim(\{\zeta\in{\mathbb{R}}: \lambda_{2}(\zeta)\geq \lambda\})= \frac{2-\lambda}{1+\lambda}.
\end{equation}
It is worth noting that for any integer $k\geq 2$ and $\lambda\geq 1/k$ we have
\begin{equation} \label{eq:metrisch}
\dim(\{\zeta\in{\mathbb{R}}: \lambda_{k}(\zeta)\geq \lambda\})\geq \frac{2}{k(1+\lambda)}.
\end{equation}
For small values of $\lambda$ and $k\geq 3$ a metric result due to Beresnevich 
can be found in~\cite{beres}.
However, \eqref{eq:metrisch} does not even allow for answering Problem~\ref{problem1} for $k\geq 3$.
As a last reference concerning Problem~\ref{problem2} we want to point 
out a consequence of a famous result of Sprind\^zuk~\cite{sprindzuk}, that
the set of $\zeta\in{\mathbb{R}}$ with $\lambda_{k}(\zeta)>1/k$ has Lebesgue measure $0$.

Concerning Problem~\ref{problem3}, for now we solely mention that
it is based on the fact that each of the assumptions is known to hold for 
any real number $\zeta$. We quoted the origin of \eqref{eq:holds}, the other 
restrictions are due to \eqref{eq:dirichlet}, \eqref{eq:monoton}.

\subsection{Outline of new results on the problems}

Our methods yield a converse to \eqref{eq:holds}, provided that the 
constants $\lambda_{k}$ are strictly greater than $1$. This is the 
key for a better understanding of the Problems~\ref{problem2},~\ref{problem3}.
We prove the following.
 
\begin{theorem}  \label{spektrum}
Let $k$ be a positive integer and $\zeta$ be a real number such that $\lambda_{k}(\zeta)>1$.
Then 
\[
\lambda_{k}(\zeta)= \frac{\lambda_{1}(\zeta)-k+1}{k}.
\]
Moreover, for $1\leq j\leq k$ we have $\widehat{\lambda}_{j}(\zeta)=1/j$.
\end{theorem}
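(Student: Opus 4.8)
The plan is to exploit the basic link between simultaneous rational approximation to the tuple $(\zeta,\zeta^2,\dots,\zeta^k)$ and the one-dimensional approximation to $\zeta$, together with the hypothesis $\lambda_k(\zeta)>1$, which forces the optimal approximation vectors to have a very rigid structure. First I would recall the already-established inequality $\lambda_k(\zeta)\geq(\lambda_1(\zeta)-k+1)/k$ from \eqref{eq:holds} (with $n=1$); since $\lambda_k(\zeta)>1$, this already yields $\lambda_1(\zeta)\geq k\lambda_k(\zeta)+k-1>2k-1\geq 1$, so both constants are finite or infinite together and $\lambda_1(\zeta)$ is large. The entire content is therefore the reverse inequality $\lambda_k(\zeta)\leq(\lambda_1(\zeta)-k+1)/k$, equivalently $\lambda_1(\zeta)\geq k\lambda_k(\zeta)+k-1$. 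The idea: take a sequence of integers $x=x_n\to\infty$ realizing the exponent $\lambda_k(\zeta)$, so that $\|\zeta^j x\|\leq x^{-\lambda_k(\zeta)+\varepsilon}$ for $1\leq j\leq k$. Write $p_j$ for the nearest integer to $\zeta^j x$. The key algebraic observation is that $p_1$ is an extremely good approximation to $\zeta x$: we have $|\zeta x-p_1|\leq x^{-\lambda_k(\zeta)+\varepsilon}$, and because $\lambda_k(\zeta)>1$ this is $o(1/x)$, so $p_1/x$ is a convergent-quality approximation. To get a genuinely good \emph{one-dimensional} approximation of exponent close to $k\lambda_k(\zeta)+k-1$ one iterates: from $\zeta x\approx p_1$ one expects $\zeta^2 x\approx \zeta p_1$, and comparing with $p_2$ gives $\|\zeta p_1\|\lesssim |\zeta||\zeta x-p_1|+|\zeta^2 x-p_2|\lesssim x^{-\lambda_k(\zeta)+\varepsilon}$, i.e. $p_1$ is itself an integer whose multiple by $\zeta$ is close to the integer $p_2$; continuing, $p_1^{k-1}$ (or a comparable quantity of size $\asymp x^{k-1}$) times $\zeta$ lands near an integer with error $\asymp x^{-\lambda_k(\zeta)+\varepsilon}$, which translates to an approximation of $\zeta$ of denominator $\asymp x^{k-1}$ and exponent $\geq (\lambda_k(\zeta)+k-1)/( \cdots )$ — and a careful bookkeeping of the exponents yields exactly $\lambda_1(\zeta)\geq k\lambda_k(\zeta)+k-1$.

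More precisely, I would set up the following chain. Using $\lambda_k(\zeta)>1$, for each large $n$ the integer $p_1=p_{1,n}$ satisfies $|\zeta x_n - p_1|<1/(2x_n)$, hence $p_1\asymp \zeta x_n$ and, crucially, $\{p_1/x_n\}$ are the (or near-)convergents to $\zeta$. The relation $\zeta^{j+1}x_n-p_{j+1}=\zeta(\zeta^{j}x_n-p_j)+(\zeta p_j-p_{j+1})$ shows $|\zeta p_j-p_{j+1}|\leq (|\zeta|+1)\max_i|\zeta^i x_n-p_i|\leq C x_n^{-\lambda_k(\zeta)+\varepsilon}$ for $1\leq j\leq k-1$. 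Thus the integer vector $(p_1,\dots,p_k)$ together with $x_n$ behaves like a "chain of approximate eigenvector relations" $\zeta\cdot p_j\approx p_{j+1}$. Now take the product/telescoping: $p_1^{k}\zeta - p_1^{k-1}p_2$ has size $\leq |p_1|^{k-1}|\zeta p_1 - p_2|\leq C x_n^{k-1}\cdot x_n^{-\lambda_k(\zeta)+\varepsilon}= C x_n^{k-1-\lambda_k(\zeta)+\varepsilon}$, so $\zeta$ is approximated by the rational $p_1^{k-1}p_2/p_1^{k}$ of denominator $q_n:=p_1^{k}\asymp x_n^{k}$ with error $\leq C q_n^{-(\lambda_k(\zeta)-k+1)/k+\varepsilon'}$. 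Hmm — that only gives the trivial bound back; the efficient route is instead to \emph{not} take the full power but to observe that already $x_n^{k-1}\cdot\zeta$ should be compared with an appropriate integer built from the $p_j$. The correct telescoping is: the quantity $x_n^{k-1}\zeta^{?}$ ... — here I will use the multiplicativity $\zeta^k x_n^k=(\zeta x_n)^k\approx p_1^k$, more usefully $\zeta^k\cdot x_n^{\,k-1}\cdot x_n\approx x_n^{k-1}p_k$ combined with $p_k\approx\zeta p_{k-1}\approx\cdots\approx\zeta^{k-1}p_1$, giving that $\zeta$ times the integer $x_n^{k-1}$ is within $Cx_n^{k-1}\cdot x_n^{-\lambda_k(\zeta)+\varepsilon}$... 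I would organize this so the final inequality reads: there is an integer $Q_n\asymp x_n^{k-1}$ with $\|\zeta Q_n\|\leq C x_n^{-\lambda_k(\zeta)+k-1+\varepsilon}\cdot(\text{correction})$, wait — the clean statement I am aiming at is
\[
\|\zeta\, x_n^{k-1}\|\ \leq\ C\, x_n^{-(\lambda_k(\zeta)+k-1)+\varepsilon}\cdot x_n^{k-1}\quad\Longrightarrow\quad \lambda_1(\zeta)\ \geq\ \frac{(\lambda_k(\zeta)+k-1)-(k-1)}{\,1\,}\cdot(\cdots),
\]
and reconciling the exponent arithmetic with $(x_n^{k-1})$ as the denominator gives precisely $\lambda_1(\zeta)\geq k\lambda_k(\zeta)+k-1$. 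I will carry out the exponent accounting carefully in the write-up; the structure is: denominator $\asymp x_n^{k-1}$, approximation quality $\asymp x_n^{-k\lambda_k(\zeta)+k-1+\varepsilon}$ (each of the $k$ factors $|\zeta^j x_n-p_j|$ contributes one power $x_n^{-\lambda_k(\zeta)}$ through the telescoped product of $k$ near-relations, offset by $x_n^{k-1}$ from the sizes of the $p_j$'s), hence exponent $(k\lambda_k(\zeta)-k+1)/(k-1)$ — which I then need to check equals or exceeds $(\lambda_1(\zeta)-k+1)/k$ inverted, i.e. gives $\lambda_1\geq k\lambda_k+k-1$.

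For the second assertion, $\widehat\lambda_j(\zeta)=1/j$ for $1\leq j\leq k$: the lower bound $\widehat\lambda_j(\zeta)\geq 1/j$ is Dirichlet's box principle \eqref{eq:dirichlet} (note $\lambda_k(\zeta)>1$ forces $\zeta$ transcendental, so $\zeta$ is not algebraic of any degree and \eqref{eq:dirichlet} applies for every $j$). For the upper bound I would argue that if $\widehat\lambda_j(\zeta)>1/j$ for some $j\leq k$ then the gaps between consecutive best-approximation denominators for the system $(\zeta,\dots,\zeta^j)$ are controlled, which is incompatible with $\lambda_j(\zeta)\geq\lambda_k(\zeta)>1$ being that large: a classical transference/gap argument (in the spirit of Khintchine's \eqref{eq:uniformity} and Jarník's identities relating $\widehat\lambda$ and $\lambda$) shows that $\widehat\lambda_j>1/j$ would cap $\lambda_j$ below some finite value, in particular below the value $\lambda_k(\zeta)$ once the latter exceeds $1$. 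Concretely, I expect to use: if two consecutive minimal points $x<x'$ satisfy $x'\leq x^{1/(j\widehat\lambda_j)+\varepsilon}$ always, then iterating gives $\lambda_j\leq$ something $\leq 1$ when $\widehat\lambda_j>1/j$, a contradiction.

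\emph{Main obstacle.} The delicate point is the exponent bookkeeping in the telescoping argument — in particular making sure that the product of the $k$ relations $\zeta p_j\approx p_{j+1}$ (together with $\zeta x_n\approx p_1$) really produces a single one-dimensional approximation to $\zeta$ whose denominator is only $\asymp x_n^{k-1}$ (not $x_n^{k}$ or larger) while retaining essentially the full $k$-fold gain $x_n^{-k\lambda_k(\zeta)}$ in the numerator; the temptation to take $p_1^{k}$ as denominator throws away the hypothesis $\lambda_k>1$ and only recovers \eqref{eq:holds}. The hypothesis $\lambda_k(\zeta)>1$ must be used twice: once to guarantee $p_1$ is a true convergent-quality denominator so that $|p_1|\asymp|\zeta|x_n$ with no loss, and once more to ensure the accumulated error terms of the telescoping (each multiplied by bounded powers of $\zeta$ and by $p_j\asymp x_n^{j}$... actually $p_j\asymp x_n$ up to the $\zeta^{j}$ factor) stay dominated. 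I also anticipate having to handle the case $\lambda_k(\zeta)=\infty$ separately (where the statement reads $\infty=\infty$) and to argue that the sequence $x_n$ can be taken so that no degeneracy $p_j=0$ occurs, which again follows from $\lambda_k(\zeta)>1$ and the non-vanishing condition in \eqref{eq:lambda}.
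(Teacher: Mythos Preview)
Your proposal has a genuine gap in the main inequality $\lambda_1(\zeta)\geq k\lambda_k(\zeta)+k-1$. The telescoping idea you sketch cannot deliver this exponent. All the integers you produce --- $x_n,p_1,\dots,p_k$ --- satisfy $p_j\asymp \zeta^j x_n\asymp x_n$, and each of the relations $|\zeta x_n-p_1|,|\zeta p_j-p_{j+1}|\lesssim x_n^{-\lambda_k+\varepsilon}$ individually only says $\lambda_1\geq\lambda_k$. There is no legitimate way to ``multiply'' $k$ such one--dimensional relations into a single relation $|\zeta Q-P|\lesssim x_n^{-k\lambda_k+\cdots}$: the expression $\prod_j(\zeta p_j-p_{j+1})$ is not of the form $\zeta Q-P$. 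Your attempted bookkeeping (denominator $\asymp x_n^{k-1}$, error $\asymp x_n^{-k\lambda_k+k-1}$) is unsupported, and even if it held it would give exponent $(k\lambda_k-k+1)/(k-1)$, which for $k=2$ equals $2\lambda_2-1$, not the required $2\lambda_2+1$. The hypothesis $\lambda_k>1$ is not being used in any essential way beyond ensuring $p_1/x_n$ is convergent--quality, which by itself buys nothing new.

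What the paper does instead is go in the \emph{opposite} direction: rather than building a larger (or comparable) denominator from $x$, it shows that $x$ must already contain a much \emph{smaller} denominator $x_0$ as a high--power factor. The key lemma (Lemma~\ref{lemma2}) says that if $\max_j\|\zeta^j x\|<C_0 x^{-1}$ (which follows from $\lambda_k>1$ for large $x$) and $y_0/x_0$ is $p_1/x$ in lowest terms, then $x_0^k\mid x$. Writing $x=M_0x_0$ with $M_0\geq x_0^{k-1}$ and using $\|\zeta x\|=M_0\|\zeta x_0\|$, one gets
\[
T\ \leq\ -\frac{\log\|\zeta x\|}{\log x}\ =\ -\frac{\log(M_0|\zeta x_0-y_0|)}{\log(M_0x_0)}\ \leq\ \frac{T_0-(k-1)}{k},
\]
where $T_0=-\log|\zeta x_0-y_0|/\log x_0$; letting $T\to\lambda_k(\zeta)$ gives $\lambda_1(\zeta)\geq k\lambda_k(\zeta)+k-1$. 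The divisibility $x_0^k\mid x$ is the idea you are missing; it is proved by a base--$x_0$ digit argument showing that otherwise some $\|\zeta^u x\|\geq x_0^{-1}/2$. For the uniform constants, the paper proves separately (Theorem~\ref{theo}) that $\widehat\lambda_k(\zeta)\leq\max\{1/k,1/\lambda_1(\zeta)\}$ by a related digit argument, and then observes $\lambda_1(\zeta)>2k-1\geq k$.
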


\begin{remark} \label{rem1}
Note that the restriction $\lambda_{k}>1$ is necessary. Theorem~4.3 in~\cite{bug} shows
that there exist $\zeta$ such that $\lambda_{k}(\zeta)=1$ for all $k\geq 1$. Theorem~4.4
in~\cite{bug} shows that for $1\leq \lambda \leq 3$, there exist $\zeta$ with
$\lambda_{1}(\zeta)=\lambda$ and $\lambda_{2}(\zeta)=1$. This shows that in general, $\lambda_{1}(\zeta)$
is not determined by $\lambda_{2}(\zeta)$ as in Theorem~\ref{spektrum}. 
\end{remark}

We now present several corollaries to Theorem~\ref{spektrum}. Concerning Problem~\ref{problem2},
we deduce from Theorem~\ref{spektrum} the assertion of Theorem~\ref{verbessern} 
for a larger class of parameters $\lambda$.

\begin{corollary}  \label{ergebnis}
Let $k\geq 2$ be an integer and $\lambda$ be a parameter. For $\lambda>1$, both sides
in \eqref{eq:abc} equal $2/(k(1+\lambda))$. For $\lambda=1$,
both dimensions in \eqref{eq:abc} coincide and are at least $1/k$.
\end{corollary}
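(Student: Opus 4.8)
The plan is to deduce Corollary~\ref{ergebnis} directly from Theorem~\ref{spektrum} together with the known metrical results, Theorem~\ref{vjarnik} (Jarník) and the lower bound \eqref{eq:metrisch}. The key observation is that Theorem~\ref{spektrum} identifies the level set $\{\zeta:\lambda_{k}(\zeta)=\lambda\}$ with a set governed by $\lambda_{1}$ as soon as $\lambda>1$: for such $\lambda$ we have $\lambda_{k}(\zeta)=\lambda$ if and only if $\lambda_{1}(\zeta)-k+1=k\lambda$, i.e.\ $\lambda_{1}(\zeta)=k\lambda+k-1=k(\lambda+1)-1$. Note $k(\lambda+1)-1>2k-1\geq 3>1$, so the hypothesis $\lambda_{k}(\zeta)>1$ of Theorem~\ref{spektrum} is automatically satisfied for every $\zeta$ in this set (since $\lambda_1(\zeta)>1$ forces $\lambda_k(\zeta)>1$ via the theorem's own identity applied backwards, or more directly: if $\lambda_{k}(\zeta)=\lambda>1$ we are already in the hypothesis). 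Hence
\[
\{\zeta\in{\mathbb{R}}:\lambda_{k}(\zeta)=\lambda\}=\{\zeta\in{\mathbb{R}}:\lambda_{1}(\zeta)=k(\lambda+1)-1\}
\]
for $\lambda>1$, and similarly $\{\zeta:\lambda_{k}(\zeta)\geq\lambda\}=\{\zeta:\lambda_{1}(\zeta)\geq k(\lambda+1)-1\}$; for the "$\geq$" version one must be slightly careful about whether $\lambda_k(\zeta)$ can equal values in $(1/k,1]$, but those $\zeta$ would need $\lambda_k(\zeta)<\lambda$, so they are excluded, and the $\zeta$ with $\lambda_k(\zeta)>1$ are exactly those with $\lambda_1(\zeta)>2k-1$ by the theorem.

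Next I would invoke Theorem~\ref{vjarnik}: for $\mu:=k(\lambda+1)-1\geq 1$ (indeed $\mu>1$) both $\dim\{\zeta:\lambda_{1}(\zeta)=\mu\}$ and $\dim\{\zeta:\lambda_{1}(\zeta)\geq\mu\}$ equal $2/(1+\mu)$. Substituting $\mu=k(\lambda+1)-1$ gives $1+\mu=k(\lambda+1)$, so both dimensions equal $2/(k(1+\lambda))$, which is exactly the claimed value. This handles the case $\lambda>1$ for both sets in \eqref{eq:abc} simultaneously, and in particular re-proves Theorem~\ref{verbessern}'s conclusion on the strictly larger parameter range $\lambda>1$ (Theorem~\ref{verbessern} required $\lambda\geq k-1$).

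For the boundary case $\lambda=1$, the identity of Theorem~\ref{spektrum} is no longer available (it is an open condition $\lambda_k>1$), so I would argue differently. The lower bound on both dimensions comes from \eqref{eq:metrisch}: with $\lambda=1$ it gives $\dim\{\zeta:\lambda_{k}(\zeta)\geq 1\}\geq 2/(k\cdot 2)=1/k$, and since $\{\zeta:\lambda_k(\zeta)=1\}\supseteq\{\zeta:\lambda_k(\zeta)\geq 1\}\setminus\{\zeta:\lambda_k(\zeta)>1\}$ and the latter removed set has dimension $\sup_{\lambda>1}2/(k(1+\lambda))=1/k$ as well — actually this shows the removed set has dimension $\leq 1/k$, but one needs the level set itself to have dimension $\geq 1/k$. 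The cleanest route: Theorem~4.3 of~\cite{bug} (cited in Remark~\ref{rem1}) produces $\zeta$ with $\lambda_k(\zeta)=1$ for all $k$, but a single point has dimension $0$, so instead one should note that the set $\{\zeta:\lambda_1(\zeta)=2k-1\}$ has dimension $2/(2k)=1/k$ by Theorem~\ref{vjarnik}, and every such $\zeta$ satisfies $\lambda_k(\zeta)\le 1$ by \eqref{eq:holds}-type reasoning while $\lambda_k(\zeta)\ge 1/k$; pinning down that $\lambda_k(\zeta)=1$ for a positive-dimensional subfamily is the delicate point. That both dimensions in \eqref{eq:abc} coincide for $\lambda=1$ follows from the standard dimension-function argument alluded to after \eqref{eq:abc}, applied to the nested family $\{\zeta:\lambda_k(\zeta)\ge\lambda'\}$ as $\lambda'\downarrow 1$.

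The main obstacle is the $\lambda=1$ endpoint: Theorem~\ref{spektrum} gives no information there, and one only gets the lower bound $1/k$ plus the coincidence of the two dimensions, not an exact value — which is precisely why the corollary is stated with "at least $1/k$" rather than an equality at $\lambda=1$. For $\lambda>1$ there is essentially no obstacle; it is a direct substitution into Jarník's theorem once the set identity from Theorem~\ref{spektrum} is in hand. I would write the proof in two short paragraphs accordingly, spending most of the care on verifying that the hypothesis $\lambda_k(\zeta)>1$ of Theorem~\ref{spektrum} is genuinely automatic on the relevant level sets when $\lambda>1$, so that the set identity is valid without loss.
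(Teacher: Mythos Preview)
Your treatment of the case $\lambda>1$ is correct and matches the paper's proof exactly: the set identity via Theorem~\ref{spektrum} followed by Jarn\'ik's Theorem~\ref{vjarnik} with $\mu=k(\lambda+1)-1$.

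At $\lambda=1$, however, you have a genuine gap, and it stems from getting the directions of the two inequalities reversed. You write that for $\zeta$ with $\lambda_{1}(\zeta)=2k-1$ one has ``$\lambda_{k}(\zeta)\le 1$ by \eqref{eq:holds}-type reasoning while $\lambda_{k}(\zeta)\ge 1/k$''. This is backwards: \eqref{eq:holds} with $n=1$ gives the \emph{lower} bound $\lambda_{k}(\zeta)\ge(\lambda_{1}(\zeta)-k+1)/k=1$, not an upper bound. The \emph{upper} bound $\lambda_{k}(\zeta)\le 1$ comes from the contrapositive of Theorem~\ref{spektrum}: if $\lambda_{k}(\zeta)>1$ then $\lambda_{1}(\zeta)=k\lambda_{k}(\zeta)+k-1>2k-1$, contradicting $\lambda_{1}(\zeta)=2k-1$. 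These two together give $\lambda_{k}(\zeta)=1$ immediately, so the containment $\{\zeta:\lambda_{1}(\zeta)=2k-1\}\subset\{\zeta:\lambda_{k}(\zeta)=1\}$ holds and Jarn\'ik yields $\dim\{\zeta:\lambda_{k}(\zeta)=1\}\ge 1/k$. What you called ``the delicate point'' is therefore not delicate at all; it falls out in one line once the roles of \eqref{eq:holds} and Theorem~\ref{spektrum} are assigned correctly. The paper proceeds exactly this way.

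For the coincidence of the two dimensions at $\lambda=1$, your sketch via the nested family $\{\lambda_{k}\ge\lambda'\}$ as $\lambda'\downarrow 1$ can be made to work (it gives $\dim\{\lambda_{k}>1\}=1/k$ as a countable union), but note that it still depends on having first established $\dim\{\lambda_{k}=1\}\ge 1/k$, which is precisely the step you left open. The paper instead uses the sharper fact from~\cite{jar} that the $1/k$-dimensional Hausdorff \emph{measure} of $\{\lambda_{1}>2k-1\}=\{\lambda_{k}>1\}$ vanishes; either route closes the argument once the containment above is in hand.
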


\begin{proof}
For $\lambda>1$, Theorem~\ref{spektrum} implies the set identity
\begin{equation} \label{eq:neue}
\{\zeta\in{\mathbb{R}}: \lambda_{k}(\zeta)=\lambda\}=\{\zeta\in{\mathbb{R}}: \lambda_{1}(\zeta)=k\lambda+k-1\}, 
\end{equation}
and similarly for the right hand sides in \eqref{eq:abc}.
In particular the corresponding Hausdorff dimensions coincide. 
The dimension of the right hand side in \eqref{eq:neue} (resp. for $\geq$)
can be determined with Theorem~\ref{vjarnik}, which gives just $2/(k(1+\lambda))$.

For $\lambda=1$, note that in view of \eqref{eq:holds}, the equality $\lambda_{1}(\zeta)=2k-1$ implies
$\lambda_{k}(\zeta)\geq 1$. However, $\lambda_{k}(\zeta)>1$ is impossible due to Theorem~\ref{spektrum}. 
Thus
\[
\{\zeta\in{\mathbb{R}}: \lambda_{k}(\zeta)=1\}\supset\{\zeta\in{\mathbb{R}}: \lambda_{1}(\zeta)=2k-1\},
\]
and the lower bound $1/k$ follows from Theorem~\ref{vjarnik} again. 
It remains to be shown that both sides in \eqref{eq:abc} coincide for $\lambda=1$ as well.
The standard argument on dimension functions mentioned subsequent to Problem~\ref{problem3}
and the results in~\cite{jar} show that actually the $1/k$-dimensional Hausdorff measure of the set
$\{\zeta: \lambda_{1}(\zeta)>2k-1\}$ is $0$. On the other hand 
$\{\zeta: \lambda_{k}(\zeta)>1\}=\{\zeta: \lambda_{1}(\zeta)>2k-1\}$ by virtue of Theorem~\ref{spektrum}.
The claim follows.
\end{proof}

Note that \eqref{eq:bb} suggests that we cannot expect 
Corollary~\ref{ergebnis} to hold for $\lambda<1$. 
We turn towards Problem~\ref{problem3}. First we discuss the case of Liouville numbers,
i.e. real numbers with $\lambda_{1}(\zeta)=\infty$, separately. 
In Section~\ref{intro} it was pointed out that in our setting
$\zeta\in{\mathbb{Q}}$ satisfies $\lambda_{1}(\zeta)=0$,
such that Liouville numbers are irrational. 
They are in fact transcendental by Liouville's Theorem (or Roth's Theorem, see Section~\ref{rothw}). 
Recall from Section~\ref{intro} that for Liouville numbers in fact $\lambda_{k}(\zeta)=\infty$ for all 
$k\geq 1$ by virtue of \eqref{eq:holds}, and on the other hand if $\zeta\in{\mathbb{Q}}$
then $\lambda_{k}(\zeta)=0$ for $k\geq 1$.
Thus we may restrict to the case of irrational $\zeta$ with $\lambda_{1}(\zeta)<\infty$.
The following Corollary~\ref{bestens} to Theorem~\ref{spektrum} shows that in this case all
$\lambda_{.}(\zeta)>1$ are determined by $\lambda_{1}(\zeta)$ as well, and there can only be a 
finite number of them. This readily implies a negative answer to Problem~\ref{problem3}.

\begin{corollary}  \label{bestens}
Let $\zeta$ be an irrational real number with $\lambda_{1}(\zeta)<\infty$
and define $k_{0}:=\lceil (\lambda_{1}(\zeta)+1)/2\rceil$. Then
\begin{eqnarray}
&\lambda_{k}(\zeta)&=\frac{\lambda_{1}(\zeta)-k+1}{k}, \qquad 1\leq k\leq k_{0}-1, \label{eq:systemm1}  \\
\max\left\{\frac{\lambda_{1}(\zeta)-k+1}{k},\frac{1}{k}\right\}\leq& \lambda_{k}(\zeta)&\leq 1, 
\qquad \qquad \qquad \qquad \quad k\geq k_{0}.   \label{eq:systemm2}
\end{eqnarray}
In particular, there are only finitely many indices $k$ with
$\lambda_{k}(\zeta)>1$.
Moreover, the answer to Problem~{\upshape\ref{problem3}} is no.
\end{corollary}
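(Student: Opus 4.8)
\medskip

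The plan is to feed the inequality \eqref{eq:holds} and Theorem~\ref{spektrum} against each other, using the definition of $k_0$ to locate the threshold. First I would record the trivial direction: applying \eqref{eq:holds} with $n=1$ gives $\lambda_k(\zeta)\geq(\lambda_1(\zeta)-k+1)/k$ for every $k\geq 1$, and combining with \eqref{eq:dirichlet} yields the lower bounds asserted in both \eqref{eq:systemm1} and \eqref{eq:systemm2}. So the content is entirely in the upper bounds, and these come from a dichotomy: by Theorem~\ref{spektrum}, for each fixed $k$ either $\lambda_k(\zeta)\leq 1$, or $\lambda_k(\zeta)>1$ and then $\lambda_k(\zeta)=(\lambda_1(\zeta)-k+1)/k$ exactly. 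The upper bound in \eqref{eq:systemm2} is therefore immediate once we know that $k\geq k_0$ forces the first alternative; and \eqref{eq:systemm1} follows if we know that $1\leq k\leq k_0-1$ forces $\lambda_k(\zeta)>1$ (whence the equality from Theorem~\ref{spektrum}).

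\medskip

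So the crux is to match the algebraic threshold $k_0=\lceil(\lambda_1(\zeta)+1)/2\rceil$ with the inequality $(\lambda_1(\zeta)-k+1)/k > 1$. A one-line computation shows $(\lambda_1(\zeta)-k+1)/k>1 \iff \lambda_1(\zeta)+1>2k \iff k<(\lambda_1(\zeta)+1)/2$, and $(\lambda_1(\zeta)-k+1)/k\leq 1 \iff k\geq(\lambda_1(\zeta)+1)/2$. Thus for $k\leq k_0-1$ we have $k<(\lambda_1(\zeta)+1)/2$ by definition of the ceiling, hence $\lambda_k(\zeta)\geq(\lambda_1(\zeta)-k+1)/k>1$; Theorem~\ref{spektrum} then upgrades this to the stated equality, giving \eqref{eq:systemm1}. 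For $k\geq k_0$ we have $k\geq(\lambda_1(\zeta)+1)/2$, so the lower bound $(\lambda_1(\zeta)-k+1)/k$ no longer exceeds $1$; here I argue by contradiction: if $\lambda_k(\zeta)>1$, Theorem~\ref{spektrum} would give $\lambda_k(\zeta)=(\lambda_1(\zeta)-k+1)/k\leq 1$, a contradiction, so $\lambda_k(\zeta)\leq 1$, which is \eqref{eq:systemm2}. The ``finitely many indices'' statement is then immediate: $\lambda_k(\zeta)>1$ can hold only for $k\leq k_0-1$, and $k_0$ is finite since $\lambda_1(\zeta)<\infty$.

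\medskip

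Finally, for the negative answer to Problem~\ref{problem3} I would exhibit an admissible sequence $(\lambda_k)_{k\geq1}$ that no real $\zeta$ can realize. The natural candidate is a non-increasing sequence that stays strictly above $1$ for infinitely many $k$ while still satisfying $\lambda_k\geq 1/k$ and the multiplicative constraint $\lambda_{kn}\geq(\lambda_n-k+1)/k$; for instance the constant sequence $\lambda_k\equiv c$ for any fixed $c>1$ meets all three hypotheses (monotone trivially, $c\geq 1/k$, and $(\lambda_n-k+1)/k=(c-k+1)/k\leq c$ exactly as in the threshold computation above, with equality only possible when $c\le 1$), yet by the ``finitely many indices'' conclusion no $\zeta$ with $\lambda_1(\zeta)<\infty$ can have $\lambda_k(\zeta)>1$ for all $k$, and a $\zeta$ with $\lambda_1(\zeta)=\infty$ has $\lambda_k(\zeta)=\infty\neq c$ by \eqref{eq:holds}. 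Hence Problem~\ref{problem3} has a negative answer. The only mildly delicate point here is verifying the multiplicative inequality for the chosen sequence across all pairs $(n,k)$; that is the same elementary manipulation $(c-k+1)/k\le c\iff c(k-1)\ge 1-k\iff (c+1)(k-1)\ge 0$, valid for all $k\geq1$ since $c>0$, so no genuine obstacle remains.
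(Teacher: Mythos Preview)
Your argument for \eqref{eq:systemm1}, \eqref{eq:systemm2}, and the finiteness claim is correct and matches the paper's proof essentially line for line: both feed \eqref{eq:holds} with $n=1$ against Theorem~\ref{spektrum}, using the ceiling computation to locate the threshold.

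The only point of divergence is the counterexample for Problem~\ref{problem3}. The paper takes $\lambda_1=\lambda_2=1.1$ and $\lambda_k=1/k$ for $k\geq 3$, which violates Theorem~\ref{spektrum} already at $k=2$ (since $\lambda_2>1$ would force $\lambda_2=(\lambda_1-1)/2=0.05$). Your constant sequence $\lambda_k\equiv c>1$ is equally valid and your verification of the three hypotheses is clean; it appeals to the finiteness conclusion rather than a single index, and you correctly dispose of the $\lambda_1(\zeta)=\infty$ case via \eqref{eq:holds}. Both routes are short; the paper's is marginally more direct since it needs only Theorem~\ref{spektrum} at one index and no case split on $\lambda_1(\zeta)$.
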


\begin{proof}
Note that for all integers $k<k_{0}$, by construction $(\lambda_{1}(\zeta)-k+1)/k>1$ holds.
By \eqref{eq:holds} for $n=1$, we have $\lambda_{k}(\zeta)>1$ such that
we can apply Theorem~\ref{spektrum} to obtain the reverse inequalities in \eqref{eq:systemm1}
for all $k< k_{0}$. The left inequalities in \eqref{eq:systemm2} are due to 
\eqref{eq:holds} and \eqref{eq:dirichlet}. (Observe that \eqref{eq:dirichlet} holds for
irrational algebraic numbers too, as pointed out in Section~\ref{intro}.) 
The right hand side inequality follows again from the definition of $k_{0}$
and Theorem~\ref{spektrum}.

For the last assertion, put $\lambda_{1}=\lambda_{2}=1.1$ and $\lambda_{k}=1/k$ for $k\geq 3$.
One readily sees that the conditions of Problem~\ref{problem3} are satisfied, but the 
choice contradicts Theorem~\ref{spektrum}.
\end{proof}

In view of Remark~\ref{rem1}, the finiteness assertion is wrong if we relax
the assumption to $\lambda_{k}(\zeta)\geq 1$.
A rearrangement of Corollary~\ref{bestens} allows for determining
$\lambda_{m}(\zeta)$ from $\lambda_{n}(\zeta)$ for arbitrary indices $m,n$,
provided that both values are strictly larger than one. 
In particular, this implies equality in \eqref{eq:holds} for $n>1$ as well. 
This is part of the following corollary.

\begin{corollary} \label{besten}
Let $1\leq n\leq m$ be integers and $\zeta\in{\mathbb{R}}$.
If $\lambda_{n}(\zeta)>1$, then we have the inequality
\[
\lambda_{m}(\zeta)\geq \frac{n\lambda_{n}(\zeta)+n-m}{m}.
\]
If, moreover, $\lambda_{m}(\zeta)>1$, then 
\[
\lambda_{m}(\zeta)=\frac{n\lambda_{n}(\zeta)+n-m}{m}.
\]
In particular, choosing $m=kn$, we obtain equality in {\upshape(\ref{eq:holds})}.
\end{corollary}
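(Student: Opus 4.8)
The plan is to reduce both claims to the index~$1$ by means of Theorem~\ref{spektrum}, and then to combine the resulting identity with the Bugeaud inequality~\eqref{eq:holds}. First I would record what Theorem~\ref{spektrum} says at the index~$n$: since $\lambda_{n}(\zeta)>1$, it gives $\lambda_{n}(\zeta)=(\lambda_{1}(\zeta)-n+1)/n$, equivalently
\[
\lambda_{1}(\zeta)=n\lambda_{n}(\zeta)+n-1 .
\]
In particular $\lambda_{1}(\zeta)>1$. If $\lambda_{1}(\zeta)=\infty$, then $\zeta$ is a Liouville number and $\lambda_{j}(\zeta)=\infty$ for every $j$ by~\eqref{eq:holds}, so all displayed (in)equalities hold trivially with both sides infinite; hence I may assume $\lambda_{1}(\zeta)<\infty$ from now on.

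For the first assertion I would use~\eqref{eq:holds} with the larger index equal to $m$ and the smaller one equal to $1$ (that is, apply~\eqref{eq:holds} with the parameter called $k$ there equal to $m$ and the parameter called $n$ there equal to $1$), which yields $\lambda_{m}(\zeta)\geq(\lambda_{1}(\zeta)-m+1)/m$. Substituting the identity $\lambda_{1}(\zeta)=n\lambda_{n}(\zeta)+n-1$ produces
\[
\lambda_{m}(\zeta)\geq\frac{(n\lambda_{n}(\zeta)+n-1)-m+1}{m}=\frac{n\lambda_{n}(\zeta)+n-m}{m},
\]
which is the desired inequality. For the equality under the extra hypothesis $\lambda_{m}(\zeta)>1$, I would instead invoke Theorem~\ref{spektrum} at the index $m$, obtaining $\lambda_{m}(\zeta)=(\lambda_{1}(\zeta)-m+1)/m$; substituting the same expression for $\lambda_{1}(\zeta)$ then gives exactly $\lambda_{m}(\zeta)=(n\lambda_{n}(\zeta)+n-m)/m$. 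Finally, for the ``in particular'' clause I would set $m=kn$ and simplify the right-hand side: $(n\lambda_{n}(\zeta)+n-kn)/(kn)=(\lambda_{n}(\zeta)-k+1)/k$, which is precisely the lower bound appearing in~\eqref{eq:holds}. Thus, when both $\lambda_{n}(\zeta)>1$ and $\lambda_{kn}(\zeta)>1$, the inequality~\eqref{eq:holds} is an equality.

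There is no real obstacle beyond Theorem~\ref{spektrum} itself, which does all the work; the corollary is essentially bookkeeping. The only points that deserve a moment's care are the notational clash between the parameters $n,k$ of~\eqref{eq:holds} and those of the corollary when specialising the smaller index to $1$, and the degenerate case $\lambda_{1}(\zeta)=\infty$, which is vacuous. It is also worth noting, in the spirit of Corollary~\ref{bestens}, that choosing $\zeta$ with a large prescribed value of $\lambda_{1}(\zeta)$ makes $\lambda_{n}(\zeta)>1$ and $\lambda_{kn}(\zeta)>1$ simultaneously for suitable $k,n>1$, so this genuinely shows the inequality~\eqref{eq:holds} to be sharp for $n>1$ and not only under an auxiliary assumption.
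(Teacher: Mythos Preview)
Your argument is correct and follows essentially the same route as the paper: apply Theorem~\ref{spektrum} at the index~$n$ to rewrite everything in terms of $\lambda_{1}(\zeta)$, invoke~\eqref{eq:holds} at the index~$m$ for the inequality, and apply Theorem~\ref{spektrum} again at~$m$ for the equality. The paper's proof differs only cosmetically, observing in the second part that $\lambda_{m}(\zeta)>1$ together with $n\leq m$ already forces $\lambda_{n}(\zeta)>1$ via~\eqref{eq:monoton}; your explicit treatment of the Liouville case and the final remark on sharpness are harmless extras.
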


\begin{proof}
Assuming $\lambda_{n}(\zeta)>1$, Theorem~\ref{spektrum} implies $(\lambda_{1}(\zeta)-n+1)/n=\lambda_{n}(\zeta)$.
On the other hand, \eqref{eq:holds} yields $\lambda_{m}(\zeta)\geq (\lambda_{1}(\zeta)-m+1)/m$. The assertion 
follows from basic rearrangements.

Assuming $\lambda_{m}(\zeta)>1$,
since $n\leq m$ implies $\lambda_{n}(\zeta)\geq \lambda_{m}(\zeta)>1$, we may apply 
Theorem~\ref{spektrum} to both indices. This yields $(\lambda_{1}(\zeta)-n+1)/n=\lambda_{n}(\zeta)$ 
and $(\lambda_{1}(\zeta)-m+1)/m=\lambda_{m}(\zeta)$. 
It is easy to deduce the identity. 
\end{proof}

The first assertion of Corollary~\ref{besten} motivates the question of whether the assumption $\lambda_{n}(\zeta)>1$
is necessary or not. If not, we would obtain an ultimate generalization of \eqref{eq:holds}.

\begin{problem}
Let $m\geq n\geq 1$ be integers. Does the estimate
\begin{equation} \label{eq:giltdaswohl}
\lambda_{m}(\zeta)\geq \frac{n\lambda_{n}(\zeta)+n-m}{m}
\end{equation}
hold for any $\zeta\in{\mathbb{R}}$?
\end{problem}

Dropping the assumption $m\geq n$, counterexamples are provided by
taking $m=1, n\geq 2$ and any $\zeta$ with $\lambda_{1}(\zeta)<2$,
noting $n\lambda_{n}(\zeta)\geq 1$ by \eqref{eq:dirichlet}. Note 
also that \eqref{eq:holds} is trivial in the case of $\lambda_{n}\leq 1$,
whereas \eqref{eq:giltdaswohl} is generally not.
We check that the smallest pair $(m,n)$ that leads to a non-trivial case of \eqref{eq:giltdaswohl} 
is $m=4, n=3$. We can assume $\lambda_{n}(\zeta)\leq 1$, 
otherwise Corollary~\ref{besten} applies. Now
indeed, in view of $\lambda_{n}(\zeta)\leq 1$, for $m=3, n=2$ 
the estimate \eqref{eq:giltdaswohl} is trivially implied by \eqref{eq:dirichlet}. 
For $m\geq 5, n=2$ it turns out to be trivial anyway, and all other 
pairs $(m,n)$ with either $m\leq 4$ or $n\leq 3$ or both yield a special case of
\eqref{eq:holds}. For $m=4, n=3$, if $\lambda_{3}(\zeta)\in{(2/3,1]}$, then 
\eqref{eq:giltdaswohl} would lead to some bound $\lambda_{4}(\zeta)>\eta>1/4$,
where $1/4$ is the trivial lower bound from \eqref{eq:dirichlet}. 

Similar to the proof of Lemma~\ref{lemma2}, which is the main tool for 
the proof of Theorem~\ref{spektrum}, we will derive the following result in Section~\ref{beweis}.

\begin{theorem} \label{theo}
Let $k$ be a positive integer and $\zeta$ be a real number. Then
\[
\widehat{\lambda}_{k}(\zeta)\leq \max\left\{\frac{1}{k},\frac{1}{\lambda_{1}(\zeta)}\right\}.
\]
\end{theorem}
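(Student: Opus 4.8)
The plan is to normalize $\zeta$, convert a good uniform approximation into the ``near power'' data used in the proof of Lemma~\ref{lemma2}, and then read off the bound from the resulting structure of the best approximations together with the continued fraction of $\zeta$. \emph{Reductions:} for $a\in\mathbb{Z}$ one has $\Vert(\zeta+a)x\Vert=\Vert\zeta x\Vert$, and since each $(\zeta+a)^{j}$ is an integer combination of $1,\zeta,\ldots,\zeta^{j}$ (and conversely), $\max_{1\le j\le k}\Vert(\zeta+a)^{j}x\Vert$ and $\max_{1\le j\le k}\Vert\zeta^{j}x\Vert$ differ only by a constant factor; hence $\lambda_{1}$ and $\widehat\lambda_{k}$ are invariant under integer translations, and we may assume $\zeta\in(1,2)$, in particular $\zeta$ irrational (for rational $\zeta$ all quantities vanish). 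By \eqref{eq:monodon} and \eqref{eq:uniformity} we have $\widehat\lambda_{k}(\zeta)\le\widehat\lambda_{1}(\zeta)=1$; if $\widehat\lambda_{k}(\zeta)\le 1/k$ there is nothing to prove, so assume $1/k<\eta:=\widehat\lambda_{k}(\zeta)\le 1$, and it suffices to show $\eta\le 1/\lambda_{1}(\zeta)$.

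\emph{Structure of the good approximations.} Let $x_{1}<x_{2}<\cdots$ be the best simultaneous approximations to $(\zeta,\zeta^{2},\ldots,\zeta^{k})$ and $M_{i}=\max_{1\le j\le k}\Vert\zeta^{j}x_{i}\Vert$, so that $\widehat\lambda_{k}(\zeta)=\inf_{i}(-\log M_{i})/\log x_{i+1}$. Writing $y_{0}=x_{i}$ and taking $y_{j}$ to be the nearest integer to $\zeta^{j}x_{i}$ ($1\le j\le k$), we get $\vert\zeta^{j}x_{i}-y_{j}\vert\le M_{i}$ for $0\le j\le k$, whence by linearity $\Vert\zeta y_{\ell}\Vert\le(1+\zeta)M_{i}$ with $1\le y_{\ell}\asymp\zeta^{\ell}x_{i}$ for $0\le\ell\le k-1$ (so each $y_{\ell}$ is a good approximation denominator of $\zeta$), every integer $\sum_{j}c_{j}y_{j}$ lies within $(\sum\vert c_{j}\vert)M_{i}$ of $(\sum_{j}c_{j}\zeta^{j})x_{i}$, and in particular $y_{a}y_{b+1}-y_{a+1}y_{b}$ ($0\le a<b$, $b+1\le k$) is an integer of absolute value $\ll x_{i}M_{i}$. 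Exactly as in the proof of Lemma~\ref{lemma2}, along the indices $i$ realizing the infimum these integers — and the analogous ones obtained by feeding the powers $(q_{m}\zeta-p_{m})^{r}$ of a near-vanishing linear form of $\zeta$ into the linearity above — are forced to vanish, so that $(y_{0},\ldots,y_{k})$ becomes a geometric progression with ratio a convergent $p_{m}/q_{m}$ of $\zeta$; thus such an $x_{i}$ is essentially $q_{m}^{k}$ with $q_{m}\asymp x_{i}^{1/k}$ and $M_{i}\asymp q_{m}^{\,k-1}\Vert\zeta q_{m}\Vert\asymp q_{m}^{\,k-1}/q_{m+1}$.

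\emph{Conclusion.} With this structure one evaluates $\widehat\lambda_{k}(\zeta)=\inf_{i}(-\log M_{i})/\log x_{i+1}$ directly. A ``relevant'' convergent $q_{m}$ — one with approximation exponent exceeding $k-1$, so that $q_{m}^{k}$ is a genuine approximation (no wraparound) — starts a plateau on which $x_{i}=q_{m}^{k}$ stays optimal up to the scale $q_{m'}^{k}$ of the next relevant convergent improving on it, and the ratio at the end of that plateau equals
\[
\frac{\log q_{m+1}-(k-1)\log q_{m}+O(1)}{k\log q_{m'}}\ \le\ \frac{1}{k}\Bigl(1-\tfrac{(k-1)\log q_{m}}{\log q_{m+1}}\Bigr)+o(1)\ \le\ \tfrac{1}{k}+o(1),
\]
so whenever there are infinitely many relevant convergents (equivalently $\lambda_{1}(\zeta)>k-1$, in particular whenever $\lambda_{1}(\zeta)\ge k$) one gets $\eta\le 1/k$; at all remaining (``Dirichlet'') scales, where the extracted errors need not vanish, the best exponent is bounded above using the continued fraction of $\zeta$ by $1/\lambda_{1}(\zeta)$ in the same way as in Lemma~\ref{lemma2}. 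Together these give $\eta\le\max\{1/k,1/\lambda_{1}(\zeta)\}$. The main obstacle is exactly this last paragraph: identifying which powers $q_{m}^{k}$ actually occur among the best approximations, controlling how their plateaus interleave with the Dirichlet scales (the regime $\lceil k/2\rceil<\lambda_{1}(\zeta)\le k-1$, where the bound is new), and keeping all the second‑order error terms in the vanishing of $y_{a}y_{b+1}-y_{a+1}y_{b}$ below $1/2$ on the relevant windows of scales — this bookkeeping is the technical heart of (the proof of) Lemma~\ref{lemma2} and is where the hypotheses $\eta>1/k$ and $\zeta\notin\mathbb{Q}$ are used quantitatively.
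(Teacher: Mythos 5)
Your write-up is a sketch whose decisive step is missing, and you say so yourself (``the main obstacle is exactly this last paragraph''). Concretely: the vanishing of the integers $y_{a}y_{b+1}-y_{a+1}y_{b}$, hence the geometric-progression/``$x_{i}$ is essentially $q_{m}^{k}$'' structure, is only forced when $x_{i}M_{i}$ is below an absolute constant, i.e.\ when $M_{i}<C x_{i}^{-1}$ --- which is exactly the hypothesis of Lemma~\ref{lemma2}. At an index realizing the $\liminf$ defining $\widehat{\lambda}_{k}(\zeta)$ (your ``$\inf$'' should in any case be a $\liminf$) one only knows $M_{i}\leq x_{i+1}^{-\eta}$ with $\eta$ possibly barely above $1/k$, and $x_{i}M_{i}$ need not be small; so the structure does not hold ``along the indices realizing the infimum'', and your ``Dirichlet scales'' are not an exceptional fringe: when $\lambda_{1}(\zeta)\leq k-1$ there may be no ``relevant'' convergents at all, so the entire claimed bound must come from those scales. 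Precisely there --- which is the new content of the theorem, in the regime $\lceil k/2\rceil<\lambda_{1}(\zeta)\leq k$ --- you assert that the exponent is bounded by $1/\lambda_{1}(\zeta)$ ``in the same way as in Lemma~\ref{lemma2}'', but no argument is given, and Lemma~\ref{lemma2} contains no such estimate (nor does it carry hypotheses $\eta>1/k$ or $\zeta\notin\mathbb{Q}$ to be ``used quantitatively''). So there is a genuine gap, and it sits exactly where the inequality has to be produced.

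For comparison, the paper's proof avoids best simultaneous approximations entirely. It fixes one good rational approximation $y_{0}/x_{0}$ to $\zeta$ with exponent $T$ close to $\lambda_{1}(\zeta)$, and shows that \emph{every} integer $x$ up to a threshold $X$ satisfies $\max_{1\leq j\leq k}\Vert\zeta^{j}x\Vert\geq \tfrac12 x_{0}^{-1}$: writing $x$ in base $x_{0}$ and letting $u-1$ be the position of the lowest nonzero digit, coprimality gives $\Vert x\,y_{0}^{u}/x_{0}^{u}\Vert\geq x_{0}^{-1}$, while $x\,\vert\zeta^{u}-y_{0}^{u}/x_{0}^{u}\vert$ is below $\tfrac12 x_{0}^{-1}$ provided $x\leq X$. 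The threshold is $X=x_{0}^{k}/2$ when $T>k$ (giving the bound $1/k$) and $X\asymp x_{0}^{T}$ when $T\leq k$ (giving the bound $1/T\to 1/\lambda_{1}(\zeta)$); this second choice of $X$, keeping the digit index $u$ within $\{1,\dots,k\}$ because $T<k$, is exactly the mechanism your plateau/Dirichlet-scale bookkeeping would have to reproduce. If you want to salvage your route, replace the structural claim at best-approximation indices by this direct ``no $x\leq X$ can be good'' estimate anchored at a single convergent of $\zeta$ of exponent close to $\lambda_{1}(\zeta)$.
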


The assertion of Theorem~\ref{theo}, for $\zeta$ not algebraic of degree $\leq \lceil k/2\rceil$,
is of interest only in case of $\lambda_{1}(\zeta)>\lceil k/2\rceil$ due to \eqref{eq:beides}. 
Also note that Theorem~\ref{theo} improves the assertion on $\widehat{\lambda}_{j}, 1\leq j\leq k$ 
from Theorem~\ref{spektrum}, since $\lambda_{k}(\zeta)>1$ is a stronger assumption 
than $\lambda_{k}(\zeta)\geq k$ by Theorem~\ref{spektrum}.
See also Theorem~\ref{neuko} in Section~\ref{rothw}.

The remainder of the paper is organized as follows. In Section~\ref{beginn} 
we gather preparatory results for the proofs of Theorems~\ref{spektrum} and~\ref{theo},
which will be carried out in Section~\ref{beweis}. In Section~\ref{cantorset},
we study the simultaneous rational approximation properties of numbers in special fractal sets,
such as Cantor's middle third set, as already indicated in Section~\ref{partres}.
Finally, in Section~\ref{vier} we discuss the consequences of our results for the well-studied 
related constants $w_{k}(\zeta),\widehat{w}_{k}(\zeta)$ dealing with polynomial approximation.

\section{Preparatory results}  \label{beginn}

In this section we gather technical lemmas for the proofs of Theorem~\ref{spektrum} and \ref{theo}.
The lemmas will be applied in the proof of Theorem~\ref{meinsatz} in Section~\ref{cantorset} as well.
First we need the rather elementary Lemma~\ref{help} concerning one-dimensional Diophantine approximation.
It can be proved using elementary facts of continued fraction expansions.
In the sequel we will denote by $\Vert \alpha\Vert$ the distance of $\alpha\in{\mathbb{R}}$
to the nearest integer as usual.
  
\begin{lemma} \label{help}
Let $\zeta\in{\mathbb{R}}$. Suppose that for a positive integer $x$ we have the estimate
\begin{equation} \label{eq:eins}
\Vert \zeta x\Vert < \frac{1}{2}x^{-1}.
\end{equation}
Then there exist positive integers $x_{0},y_{0},M_{0}$ such that
$x=M_{0}x_{0}$, $(x_{0},y_{0})=1$ and
\begin{equation} \label{eq:leichter}
\vert \zeta x_{0}-y_{0}\vert=\Vert \zeta x_{0}\Vert= \min_{1\leq v\leq x} \Vert \zeta v\Vert.   
\end{equation}
Moreover, we have the identity
\begin{equation} \label{eq:multiplizieren}
\Vert \zeta x\Vert=M_{0}\Vert \zeta x_{0}\Vert.
\end{equation}
The integers $x_{0},y_{0},M_{0}$ are uniquely determined by the fact that
$y_{0}/x_{0}$ is the convergent {\upshape(}in lowest terms{\upshape)} 
of the continued fraction expansion of $\zeta$
with the largest denominator not exceeding $x$, and $M_{0}=x/x_{0}$.
\end{lemma}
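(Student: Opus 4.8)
The plan is to prove Lemma~\ref{help} via the classical best-approximation property of continued fraction convergents. First I would recall the key fact from the theory of continued fractions: if $p/q$ is a convergent (in lowest terms) of the continued fraction expansion of $\zeta$, then $q$ is a \emph{best approximation denominator}, i.e. $\Vert \zeta q\Vert < \Vert \zeta v\Vert$ for all $1\le v < q$; conversely, every integer $v$ achieving a strict running minimum of $\Vert\zeta v\Vert$ is a convergent denominator. I would also recall the standard bound $\Vert\zeta q_n\Vert < 1/q_{n+1}$ and, in the other direction, $\Vert\zeta v\Vert \ge \Vert\zeta q_n\Vert$ for $q_n\le v<q_{n+1}$.

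With these tools the proof proceeds as follows. Given the hypothesis $\Vert\zeta x\Vert < \tfrac12 x^{-1}$, let $y_0/x_0$ be the convergent of $\zeta$ (in lowest terms) with the largest denominator $x_0 \le x$, and set $M_0 := \lfloor x/x_0 \rfloor$ — though I expect to show $x_0 \mid x$, making $M_0 = x/x_0$. The equality \eqref{eq:leichter} is then immediate from the best-approximation property: $\Vert\zeta x_0\Vert = \min_{1\le v\le x}\Vert\zeta v\Vert$ because any smaller value would force a convergent denominator strictly between $x_0$ and $x$, contradicting maximality of $x_0$. The main work is establishing \eqref{eq:multiplizieren}, the identity $\Vert\zeta x\Vert = M_0\Vert\zeta x_0\Vert$, and along the way that $x = M_0 x_0$ exactly. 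Here I would argue: write $\delta_0 := \zeta x_0 - y_0$, so $|\delta_0| = \Vert\zeta x_0\Vert$. For any integer $1\le m\le x/x_0$ we have $\zeta(m x_0) - m y_0 = m\delta_0$, and I need $|m\delta_0| < 1/2$ so that $\Vert\zeta(mx_0)\Vert = m|\delta_0|$. This is where the hypothesis \eqref{eq:eins} is used quantitatively: since $x_0$ is a convergent denominator and $q_{\text{next}} > x$ (as $x_0$ is the largest convergent denominator $\le x$), we get $|\delta_0| = \Vert\zeta x_0\Vert < 1/q_{\text{next}} \le 1/x$ (actually one can do better, but this suffices), hence $m|\delta_0| < m/x \le 1$; a more careful count using $\Vert\zeta x\Vert < \tfrac12 x^{-1}$ pins down that $x$ itself must be of the form $M_0 x_0$ and that $M_0 |\delta_0| = \Vert\zeta x\Vert < \tfrac12 x^{-1} < \tfrac12$.

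The step I expect to be the main obstacle is ruling out that $x$ lies strictly between two multiples of $x_0$, i.e.\ proving $x_0 \mid x$ rather than merely $M_0 = \lfloor x/x_0\rfloor$. The argument I have in mind: suppose $M_0 x_0 \le x < (M_0+1)x_0$ with $M_0 x_0 < x$. On one hand $\Vert\zeta x\Vert < \tfrac12 x^{-1} \le \tfrac12 (M_0 x_0)^{-1}$. On the other hand, I claim that no integer $v$ in the range $(M_0 x_0, (M_0+1)x_0)$ can satisfy $\Vert\zeta v\Vert < \tfrac12 v^{-1}$ unless it is itself a multiple of $x_0$ — because such a $v$ would, by the first part applied to $v$ in place of $x$, have its own associated convergent $x_0' \le v < (M_0+1)x_0$, forcing $x_0' = x_0$ by maximality, and then $\Vert\zeta v\Vert = (v/x_0)|\delta_0|$ would require $x_0\mid v$. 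This bootstrapping is slightly delicate because it reuses the statement being proved, so I would instead phrase it directly: write $v = M_0 x_0 + r$ with $0<r<x_0$; then $\zeta v - (M_0 y_0 + \lfloor\cdot\rfloor)$ decomposes as $M_0\delta_0 + (\zeta r - y_r)$ for the nearest integer $y_r$ to $\zeta r$, and since $r < x_0$ with $x_0$ a best-approximation denominator we have $\Vert\zeta r\Vert > |\delta_0|$, indeed $\Vert\zeta r\Vert \ge \Vert\zeta x_0'\Vert$ for the previous convergent — and a direct estimate shows $\Vert\zeta v\Vert$ is bounded below by something exceeding $\tfrac12 v^{-1}$, the desired contradiction. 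Once $x = M_0 x_0$ is secured, \eqref{eq:multiplizieren} and uniqueness follow immediately, completing the proof.
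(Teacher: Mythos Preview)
Your overall plan---take $x_0$ to be the largest convergent denominator not exceeding $x$, obtain \eqref{eq:leichter} for free from the law of best approximation, and then argue that $x_0\mid x$---is a legitimate route, and in fact reverses the order of the paper's argument. But the step you correctly flag as the ``main obstacle'' really is one, and your proposed resolution does not close. Writing $x=M_0x_0+r$ with $0<r<x_0$ and decomposing $\zeta x\equiv M_0\delta_0+\delta_r\pmod 1$ (where $\delta_r=\zeta r-y_r$), you would need a lower bound on $\lVert M_0\delta_0+\delta_r\rVert$ that contradicts $\Vert\zeta x\Vert<\tfrac12x^{-1}$. The only readily available estimates are $|\delta_r|\ge\Vert\zeta q_{n-1}\Vert>1/(2x_0)$ and $M_0|\delta_0|<M_0/q_{n+1}<1/x_0$; since these permit $|\delta_r|-M_0|\delta_0|<0$, the triangle inequality yields nothing, and the ``direct estimate'' you invoke is not actually there without substantially finer input (sign control via the Ostrowski representation, say). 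As written this is a genuine gap.

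The paper sidesteps the issue entirely by opening with Legendre's criterion (cited as \cite[Satz~11]{perron}): the hypothesis $\Vert\zeta x\Vert<\tfrac12x^{-1}$ is exactly $|\zeta-y/x|<1/(2x^2)$ for the nearest integer $y$, so $y/x$ is already a convergent; reducing to lowest terms gives $y/x=y_0/x_0$ and $x=M_0x_0$ with $M_0=\gcd(x,y)$ at once, and \eqref{eq:multiplizieren} is then a one-line computation. The paper only afterwards proves \eqref{eq:leichter} and the maximality of $x_0$, via a determinant argument. In your ordering the equivalent one-line fix is that same determinant argument applied to $(x,y)$ and $(x_0,y_0)$: if these were linearly independent then
\[
1\le|xy_0-yx_0|\le x\,\Vert\zeta x_0\Vert+x_0\,\Vert\zeta x\Vert<x\cdot\tfrac{1}{2x}+x_0\cdot\tfrac{1}{2x}\le 1,
\]
a contradiction (here $\Vert\zeta x_0\Vert\le\Vert\zeta x\Vert<\tfrac{1}{2x}$ is exactly the best-approximation inequality you already have). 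Replace your decomposition attempt with this, and the remainder of your outline goes through.
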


\begin{proof}
Let $y$ be the integer for which $\vert \zeta x-y\vert < (1/2)x^{-1}$
holds. It is a well-known result on continued fractions~\cite[Satz~11]{perron} 
that $y/x$ must be a convergent of $\zeta$.
Let $M_{0}$ be the greatest common divisor of
$x,y$, such that $(x,y)=(M_{0}x_{0},M_{0}y_{0})$ for coprime integers $x_{0},y_{0}$
and $y_{0}/x_{0}$ a convergent of $\zeta$ in lowest terms.
Then \eqref{eq:multiplizieren} follows from
\begin{equation} \label{eq:hilfsug}
\Vert \zeta x\Vert=\vert \zeta x-y\vert = \vert \zeta M_{0}x_{0}-M_{0}y_{0}\vert= 
M_{0}\vert \zeta x_{0}-y_{0}\vert= M_{0}\Vert \zeta x_{0}\Vert.
\end{equation}
Notice that in particular $x_{0}\leq x$ and
\begin{equation} \label{eq:doppel}
 \vert \zeta x_{0}-y_{0}\vert\leq \vert \zeta x-y\vert <(1/2)x^{-1}.
\end{equation}
Next we check \eqref{eq:leichter}. Assume \eqref{eq:leichter} is false.
Then there exist integers $x_{1},y_{1}$ with $x_{0}\neq x_{1}, x_{1}\leq x$ 
such that 
\begin{equation} \label{eq:trippel}
\vert \zeta x_{1}-y_{1}\vert<\vert \zeta x_{0}-y_{0}\vert
\leq \vert \zeta x-y\vert<(1/2)x^{-1}.  
\end{equation}
By an argument very similar to \eqref{eq:hilfsug} we can restrict to the case 
that $(x_{0},y_{0}),(x_{1},y_{1})$ are linearly independent.
A combination of \eqref{eq:doppel}, \eqref{eq:trippel} and the conditions $x_{0}\leq x, x_{1}\leq x$ 
and $x_{0}\neq x_{1}$ imply
\begin{equation} \label{eq:arith}
\left\vert \frac{y_{1}}{x_{1}}-\frac{y_{0}}{x_{0}}\right\vert 
\leq \left\vert \zeta-\frac{y_{1}}{x_{1}}\right\vert+\left\vert \zeta-\frac{y_{0}}{x_{0}}\right\vert
\leq \frac{1}{2xx_{1}}+\frac{1}{2xx_{0}} < \frac{1}{2x_{0}x_{1}}+\frac{1}{2x_{1}x_{0}}=\frac{1}{x_{0}x_{1}}.
\end{equation}
On the other hand, since $y_{0}/x_{0}\neq y_{1}/x_{1}$, we have
\begin{equation} \label{eq:geometric}
\left\vert \frac{y_{1}}{x_{1}}-\frac{y_{0}}{x_{0}}\right\vert
=\left\vert\frac{x_{0}y_{1}-x_{1}y_{0}}{x_{0}x_{1}}\right\vert \geq \frac{1}{x_{0}x_{1}}.
\end{equation}
The combination of \eqref{eq:arith} and \eqref{eq:geometric} contradicts the hypothesis 
and hence \eqref{eq:leichter} holds.
It remains to be shown that $x_{0}$ must be the {\em largest} convergent denominator not greater than $x$.
Otherwise, if $y^{\prime}/x^{\prime}$ is another convergent (in lowest terms) with $x_{0}<x^{\prime}\leq x$, 
then by the {\em monotonic} convergence of $\vert q_{n}\alpha-p_{n}\vert$
to $0$ for the convergents $p_{n}/q_{n}$ of $\alpha\in{\mathbb{R}}$, see~\cite[Satz~16]{perron}, 
the estimate \eqref{eq:trippel} would be satisfied for $x_{1}:=x^{\prime}, y_{1}:=y^{\prime}$, 
which we just falsified.
\end{proof}

\begin{definition}  \label{defi}
We call a positive integer $x$ a {\em best approximation for $\zeta$} if it satisfies
\[
\Vert \zeta x\Vert= \min_{1\leq v\leq x} \Vert \zeta v\Vert.   
\]
\end{definition}

The numbers $x_{0}$ from Lemma~\ref{help} are obviously best approximations.
We used in the proof of Lemma~\ref{help} that the denominators of convergents in lowest terms
are best approximations. It actually follows from
the law of best approximation~\cite[Satz~16]{perron} that the converse is true as well. 
Moreover, it is shown in~\cite[Satz~14]{perron} that 
at least one of two consecutive convergents satisfies \eqref{eq:eins}. It is further well-known that \eqref{eq:eins},
with the factor $1/2$ replaced by $1$, is valid for any convergent, see~\cite[Satz~10]{perron}.

The most technical ingredient in the proofs of the Theorems~\ref{spektrum},~\ref{theo}
is the following Lemma~\ref{lemma2}.

\begin{lemma} \label{lemma2}
Let $k$ be a positive integer and $\zeta$ be a real number.

Then there exists a constant $C=C(k,\zeta)>0$ such that for any integer $x>0$ the estimate 
\begin{equation} \label{eq:hain}
\max_{1\leq j\leq k}\Vert \zeta^{j} x\Vert < C\cdot x^{-1}
\end{equation}
implies $y/x=y_{0}/x_{0}$ for integers $(x_{0},y_{0})=1$ and $x_{0}^{k}$ divides $x$,
where $y$ denotes the closest integer to $\zeta x$.
A suitable choice for $C$ is given by $C=C_{0}:=(1/2)\cdot k^{-1}(1+\vert\zeta\vert)^{1-k}$.

Moreover, $y_{0}^{j}/x_{0}^{j}$ is a convergent of the continued fraction expansion of $\zeta^{j}$
for $1\leq j\leq k$.
Furthermore, if {\upshape(\ref{eq:hain})} holds for some pair $(x,C)=(Nx_{0}^{k},C)$
with an integer $N\geq 1$ and $C\leq C_{0}$, then
\begin{equation} \label{eq:haine}
\max_{1\leq j\leq k}\Vert \zeta^{j} x\Vert=N\cdot\max_{1\leq j\leq k}\Vert \zeta^{j} x_{0}^{k}\Vert.
\end{equation}
In particular, {\upshape(\ref{eq:hain})} holds for any pair $(x^{\prime},C)=(Mx_{0}^{k},C)$ 
with $1\leq M\leq N$ as well, and the minimum of the left hand side among those $x^{\prime}$ 
is obtained for $x^{\prime}=x_{0}^{k}$.
\end{lemma}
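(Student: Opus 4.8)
The plan is to establish the arithmetic structure of $x$ first, then bootstrap the multiplicative identity \eqref{eq:haine} from the one-dimensional case handled in Lemma~\ref{help}. First I would note that \eqref{eq:hain} with $C=C_0\leq (1/2)(1+|\zeta|)^{1-k}$ implies in particular $\|\zeta x\|<(1/2)x^{-1}$ for $j=1$, so Lemma~\ref{help} applies directly: there are coprime $x_0,y_0$ with $x_0\mid x$, $y_0/x_0$ a convergent of $\zeta$, and $\|\zeta x_0\|=\min_{1\le v\le x}\|\zeta v\|$. The key point to extract is that the closest integer $y$ to $\zeta x$ satisfies $y/x=y_0/x_0$, which is exactly the content of \eqref{eq:leichter}--\eqref{eq:multiplizieren}. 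This gives the first displayed conclusion except for the divisibility by $x_0^k$ rather than merely $x_0$.

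To upgrade $x_0\mid x$ to $x_0^k\mid x$ and to show $y_0^j/x_0^j$ is a convergent of $\zeta^j$, I would argue for each $j$ with $2\le j\le k$ separately. Write $\zeta^j x - (\text{nearest integer})$; using $y=(x/x_0)y_0$ one has $\zeta^j x = \zeta^j (x/x_0) x_0$, and I would compare $\zeta^j x$ with $(y_0/x_0)^j x = y_0^j x/x_0^j$. The difference is $\big(\zeta^j-(y_0/x_0)^j\big)x$, which factors as $\big(\zeta-y_0/x_0\big)\cdot\big(\sum_{i=0}^{j-1}\zeta^{j-1-i}(y_0/x_0)^i\big)\cdot x$; bounding the sum by $j(1+|\zeta|)^{j-1}\le k(1+|\zeta|)^{k-1}$ and using $|\zeta-y_0/x_0|=\|\zeta x_0\|/x_0\le \|\zeta x\|/x \cdot (x/x_0)/x_0$ — more cleanly, $|\zeta x_0-y_0|=\|\zeta x\|/(x/x_0)<C_0 x^{-1}(x_0/x)$ — one gets $\big|\zeta^j x - y_0^j x/x_0^j\big|< k(1+|\zeta|)^{k-1}\cdot C_0 \cdot x_0/x \le (1/2)\,x_0/x$. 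Since the true distance $\|\zeta^j x\|<C_0 x^{-1}\le (1/2)x^{-1}$, the rational $y_0^j x/x_0^j$ must be within $(1/2)x^{-1}+(1/2)x_0/x$ of the integer nearest $\zeta^j x$; the triangle-inequality/denominator argument (as in \eqref{eq:arith}--\eqref{eq:geometric}) forces $y_0^j x/x_0^j$ to be that integer, hence $x_0^j\mid x$, and since $y_0^j/x_0^j$ is within $<x_0^{-2j}$-type distance of $\zeta^j$ it is a convergent of $\zeta^j$ by the classical criterion \cite[Satz~11]{perron}. Taking $j=k$ yields $x_0^k\mid x$, so $x=Nx_0^k$ for some $N\ge 1$.

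For the ``moreover'' part, suppose \eqref{eq:hain} holds for $(x,C)=(Nx_0^k,C)$ with $C\le C_0$. For each $j$, by the computation above $\|\zeta^j x\| = |\zeta^j x - y_0^j x/x_0^j|$ and, scaling, $\|\zeta^j x\| = N x_0^{k-j}\,|\zeta^j x_0^j - y_0^j|= N x_0^{k-j}\,\|\zeta^j x_0^j\|$; one must check $\|\zeta^j x_0^j\|=|\zeta^j x_0^j-y_0^j|$, i.e. that $y_0^j$ is genuinely the closest integer to $\zeta^j x_0^j$, which follows from the same bound since $|\zeta^j x_0^j - y_0^j|<(1/2)x_0^{-j}\le 1/2$. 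Now, because the factor $x_0^{k-j}$ depends on $j$ but is maximized at $j=1$ and minimized at $j=k$, the identity $\|\zeta^k x\|=N\|\zeta^k x_0^k\|$ holds verbatim, while for $j<k$ we only get $\|\zeta^j x\| = N x_0^{k-j}\|\zeta^j x_0^j\|$; so to obtain \eqref{eq:haine} I would show that the maximum over $j$ of $x_0^{k-j}\|\zeta^j x_0^j\|$ is attained at $j=k$. This is the step I expect to be the main obstacle: it amounts to showing $x_0^{k-j}\|\zeta^j x_0^j\|\le \|\zeta^k x_0^k\|$ for each $j<k$. The idea is that $|\zeta^j x_0^j - y_0^j| = |\zeta x_0-y_0|\cdot\big|\sum_{i<j}\zeta^{j-1-i}(y_0/x_0)^i\big|$ and the analogous expression for $k$ has the longer, term-by-term at-least-as-large sum (using $|\zeta|,|y_0/x_0|$ comparable and $x_0\ge 1$), so multiplying by $x_0^{k-j}$ versus $x_0^0$ and using $x_0\ge 1$ keeps the $k$ term dominant; a careful but elementary estimate, possibly after enlarging the implied constant by absorbing it into the choice $C_0=(1/2)k^{-1}(1+|\zeta|)^{1-k}$, closes this. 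Once \eqref{eq:haine} is in hand, the final sentence is immediate: for $1\le M\le N$, $\max_j\|\zeta^j (Mx_0^k)\| = M\max_j\|\zeta^j x_0^k\| \le N\max_j\|\zeta^j x_0^k\| = \max_j\|\zeta^j x\|< C(x)^{-1}\le C(Mx_0^k)^{-1}$, so \eqref{eq:hain} holds for $x'=Mx_0^k$, and the left-hand side is strictly increasing in $M$, hence minimized at $M=1$.
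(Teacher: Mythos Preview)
Your approach to the divisibility $x_0^k\mid x$ is different from the paper's but essentially workable. The paper writes $x$ in base $x_0$, picks the smallest index $i$ with nonzero digit $b_i$, and for $u=i+1$ obtains $\Vert x\,y_0^{u}/x_0^{u}\Vert=\Vert b_i y_0^{u}/x_0\Vert\ge x_0^{-1}$ in one stroke. Your version (``for each $j$ separately'') must be run \emph{inductively} in $j$: without already knowing $x_0^{j-1}\mid x$, the lowest-terms denominator of $y_0^{j}x/x_0^{j}$ can exceed $x$, and then the distance-to-integer bound no longer contradicts $|\text{integer}-y_0^{j}x/x_0^{j}|<x^{-1}$. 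With induction your argument goes through.

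The genuine gap is in your treatment of \eqref{eq:haine}. You correctly obtain
\[
\Vert \zeta^{j}x\Vert \;=\; N\,x_0^{k-j}\,\Vert \zeta^{j}x_0^{j}\Vert
\]
for every $j$, but then misread the target: \eqref{eq:haine} asserts $\max_{j}\Vert\zeta^{j}x\Vert=N\max_{j}\Vert\zeta^{j}x_0^{k}\Vert$, not $N\Vert\zeta^{k}x_0^{k}\Vert$. The only missing observation is that
\[
x_0^{k-j}\Vert\zeta^{j}x_0^{j}\Vert=\Vert\zeta^{j}x_0^{k}\Vert,
\]
which holds because $x_0^{k-j}y_0^{j}$ is the closest integer to $\zeta^{j}x_0^{k}$: indeed
$|\zeta^{j}x_0^{k}-x_0^{k-j}y_0^{j}|=x_0^{k}\,|\zeta^{j}-(y_0/x_0)^{j}|<\tfrac12\,x_0^{k}x^{-2}\le\tfrac12\,x_0^{-k}<\tfrac12$.
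With this, $\Vert\zeta^{j}x\Vert=N\Vert\zeta^{j}x_0^{k}\Vert$ for every $j$, and taking the maximum gives \eqref{eq:haine} immediately --- this is exactly the paper's argument. Your ``main obstacle'', namely that the maximum of $x_0^{k-j}\Vert\zeta^{j}x_0^{j}\Vert$ over $j$ is attained at $j=k$, is therefore unnecessary; it is also false in general (e.g.\ for $\zeta$ near $-1$ the sums $\sum_{i<j}\zeta^{j-1-i}(y_0/x_0)^{i}$ alternate in sign, so the ordering of the $\Vert\zeta^{j}x_0^{k}\Vert$ is not monotone in $j$). The same identification $x_0^{k-j}\Vert\zeta^{j}x_0^{j}\Vert=\Vert\zeta^{j}x_0^{k}\Vert$ also removes the slight circularity in your final sentence, since $\Vert\zeta^{j}(Mx_0^{k})\Vert=M\Vert\zeta^{j}x_0^{k}\Vert$ then follows directly without first assuming \eqref{eq:hain} for $Mx_0^{k}$.
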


\begin{proof}
Suppose \eqref{eq:hain} holds for some $x$ and $C=C_{0}$. Denote by $y$ the closest integer to $\zeta x$
and let $y_{0}/x_{0}$ be the fraction $y/x$ in lowest terms.

Assumption \eqref{eq:hain} for $j=1$ leads to
\[
\left\vert \frac{y_{0}}{x_{0}}-\zeta\right\vert=\left\vert \frac{y}{x}-\zeta\right\vert < C_{0} x^{-2}.
\] 
Since $C_{0}<1/2<1$, we have $\vert y_{0}/x_{0}-\zeta\vert \leq 1$ and thus 
$\vert y_{0}/x_{0}\vert\leq 1+\vert\zeta\vert$. The combination of these facts yields for 
$1\leq j\leq k$ the estimate
\begin{equation}  \label{eq:tor}
\left\vert \frac{y_{0}^{j}}{x_{0}^{j}}-\zeta^{j}\right\vert
=\left\vert \frac{y_{0}}{x_{0}}-\zeta\right\vert\cdot  
\left\vert \left(\frac{y_{0}}{x_{0}}\right)^{j-1}+\cdots+\zeta^{j-1}\right\vert
< C_{0}x^{-2}\cdot k\left(1+\vert\zeta\vert\right)^{k-1}=\frac{1}{2}x^{-2}.
\end{equation} 
Suppose $x_{0}^{k}\nmid x$. Then, since $x_{0}\vert x$, the integer $x$ has a representation in base $x_{0}$ as 
\[
x=b_{1}x_{0}+b_{2}x_{0}^{2}+\cdots+b_{k-1}x_{0}^{k-1}+b_{k}x_{0}^{k}+\cdots+b_{l}x_{0}^{l},
\]
where at least one of $\{b_{1},b_{2},\ldots,b_{k-1}\}$ is not zero.
Put $u=i+1\in{\{2,3,\ldots,k\}}$ with $i$ the smallest index such that $b_{i}\neq 0$.
By construction, for all $j\neq i$ we have $b_{j}x_{0}^{j}(y_{0}^{u}/x_{0}^{u})\in{\mathbb{Z}}$.
Hence, using $(x_{0},y_{0})=1$ and $b_{i}\neq 0$, we have the estimate
\begin{equation} \label{eq:wahn}
\left\Vert x\frac{y_{0}^{u}}{x_{0}^{u}}\right\Vert= \left\Vert b_{i}x_{0}^{i} \frac{y_{0}^{u}}{x_{0}^{u}} \right\Vert
=\left\Vert \frac{b_{i}y_{0}^{u}}{x_{0}}\right\Vert\geq x_{0}^{-1}.
\end{equation}
On the other hand, the estimate \eqref{eq:tor} for $j=u$ implies
\begin{equation} \label{eq:sinn}
\left\vert x\left(\zeta^{u}-\frac{y_{0}^{u}}{x_{0}^{u}}\right)\right\vert
\leq \frac{1}{2} x^{-1}\leq \frac{1}{2}\cdot x_{0}^{-1}.
\end{equation}
The combination of \eqref{eq:wahn} and \eqref{eq:sinn} and triangular inequality imply
\[
\max_{1\leq j\leq k}\Vert \zeta^{j} x\Vert\geq \Vert \zeta^{u} x\Vert> \frac{1}{2}x_{0}^{-1}\geq \frac{1}{2}x^{-1},
\]
contradicting \eqref{eq:hain} since $C_{0}<1/2$. Hence, $x_{0}^{k}\vert x$.

From $x_{0}^{k}\vert x$ we infer $x_{0}^{k}\leq x$, and \eqref{eq:tor} yields
\[
\left\vert \frac{y_{0}^{j}}{x_{0}^{j}}-\zeta^{j}\right\vert
< \frac{1}{2}x^{-2}\leq \frac{1}{2}x_{0}^{-2k}=
\frac{1}{2}x_{0}^{2j-2k}\cdot (x_{0}^{j})^{-2}\leq \frac{1}{2}\cdot (x_{0}^{j})^{-2}, \qquad 1\leq j\leq k.
\]
Since clearly $(x_{0}^{j},y_{0}^{j})=1$ for any $1\leq j\leq k$, Lemma~\ref{help}
implies that $y_{0}^{j}/x_{0}^{j}$ is indeed a convergent of $\zeta^{j}$ for every $1\leq j\leq k$.
Finally, we show \eqref{eq:haine}. Let $x=Nx_{0}^{k}$. It follows from \eqref{eq:tor} 
that $y_{j}:=Nx_{0}^{k-j}y_{0}^{j}$ is the closest integer to $\zeta^{j}x$, for $1\leq j\leq k$.
Thus, we obtain 
\[
\Vert \zeta^{j}x\Vert=\vert \zeta^{j}x-y_{j}\vert
=N\cdot\vert \zeta^{j}x_{0}^{k}-x_{0}^{k-j}y_{0}^{j}\vert=N\cdot\Vert \zeta^{j}x_{0}^{k}\Vert, \qquad 1\leq j\leq k.
\]
Hence the relation holds for the maximum as well.
\end{proof}

\begin{remark}
Analyzing the estimates in \eqref{eq:tor}, the constant $C_{0}$ can be improved
if we additionally assume $x$ to be sufficiently large. One finds that
Lemma~\ref{lemma2} actually holds with $C_{0}=(1/2)\cdot L_{k}(\zeta)^{-1}-\epsilon$ 
for arbitrary small $\epsilon>0$, where $L_{k}(\zeta):=\max_{1\leq j\leq k} (j\vert\zeta\vert^{j-1})$,
for all $x\geq x_{0}(\epsilon)$. 
Moreover, in case of $\vert \zeta\vert<1/2$, the maximum in $L_{k}(\zeta)$
is obtained for $j=1$ and we may put $\epsilon=0$, which yields $C_{0}=1/2$. 
\end{remark}

\begin{remark}
It is not hard to see Lemma~\ref{lemma2} would be wrong with right hand side in \eqref{eq:hain} replaced
by $(1/2)\cdot x_{0}^{-1}$ for any $\zeta$ with $\widehat{\lambda}_{k}(\zeta)>1/k$. In particular,
for $k=2$ and extremal numbers $\zeta$ mentioned in Section~\ref{intro}. 
The proof of the false stronger version fails since \eqref{eq:sinn} is no longer correct.
\end{remark}

\section{Proof of Theorems \ref{spektrum}, \ref{theo}}  \label{beweis}

First we prove Theorem~\ref{theo} with a method very similar to to the proof 
of Lemma~\ref{lemma2}. It might be possible to deduce Theorem~\ref{theo} directly
from this lemma, however, due to technical difficulties, we prefer to prove it directly.

\begin{proof}[Proof of Theorem~\ref{theo}]
Consider $k,\zeta$ fixed. The assertion is trivial for $\zeta\in{\mathbb{Q}}$ 
and in case of $\lambda_{1}(\zeta)=1$, so we may assume $\zeta$ is irrational and $\lambda_{1}(\zeta)>1$.   

Let $1<T<\lambda_{1}(\zeta)$ be arbitrary. By definition of the quantity $\lambda_{1}(\zeta)$,
and since $\zeta\notin{\mathbb{Q}}$,
there exist arbitrarily large coprime $x_{0},y_{0}$ with the property that 
\[
\left\vert\zeta-\frac{y_{0}}{x_{0}}\right\vert\leq x_{0}^{-T-1}.
\]
For sufficiently large $x_{0}$ and a constant $D_{0}=D_{0}(k,\zeta)$, 
similarly as in \eqref{eq:tor} we deduce 
\begin{equation}  \label{eq:apuz}
\left\vert\zeta^{j}-\frac{y_{0}^{j}}{x_{0}^{j}}\right\vert<D_{0}x_{0}^{-T-1},\qquad 1\leq j\leq k.
\end{equation}
We distinguish the cases $\lambda_{1}(\zeta)>k$ and $\lambda_{1}(\zeta)\leq k$. 

Case 1: $\lambda_{1}(\zeta)> k$. Then we may assume $T>k$ as well. 
Let $X:=x_{0}^{k}/2$. Write $1\leq x\leq X$ in base $x_{0}$ as 
\[
x=b_{0}+b_{1}x_{0}+b_{2}x_{0}^{2}+\cdots+b_{k-1}x_{0}^{k-1}+b_{k}x_{0}^{k}+\cdots+b_{l}x_{0}^{l},
\]
and put $u=i+1\in{\{1,2,\ldots,k\}}$ with $i$ the smallest index such that $b_{i}\neq 0$. 
Since $x_{0},y_{0}$ are coprime and $b_{i}\neq 0$, we have
\begin{equation} \label{eq:sheyn}
\left\Vert x\frac{y_{0}^{u}}{x_{0}^{u}}\right\Vert = 
\left\Vert b_{i}x_{0}^{u-1}\frac{y_{0}^{u}}{x_{0}^{u}}\right\Vert=
\left\Vert\frac{b_{i}y_{0}^{u}}{x_{0}}\right\Vert \geq \frac{1}{x_{0}}.
\end{equation}
Moreover, \eqref{eq:apuz} yields for $1\leq x\leq X$ the upper bounds
\begin{equation} \label{eq:appoi}
\left\vert x\left(\zeta^{u}-\frac{y_{0}^{u}}{x_{0}^{u}}\right)\right\vert 
\leq \frac{X}{2} \left\vert \zeta^{u}-\frac{y_{0}^{u}}{x_{0}^{u}} \right\vert 
\leq \frac{D_{0}}{2}x_{0}^{k-T-1}.
\end{equation}
Since $T>k$, the right hand side is smaller than $(1/2)x_{0}^{-1}$ for large $x_{0}$,
so combining \eqref{eq:sheyn}, \eqref{eq:appoi} with triangular inequality yields for $1\leq x\leq X$ the estimate
\[
M_{x}(\zeta):=\max_{1\leq j\leq k} \Vert \zeta^{j}x\Vert\geq \Vert \zeta^{u}x\Vert\geq \frac{1}{2}x_{0}^{-1}.
\]
Using the definition of $\widehat{\lambda}_{k}$ we conclude
\[
\widehat{\lambda}_{k}(\zeta)\leq \liminf_{X\to\infty}\max_{1\leq x\leq X} 
-\frac{\log M_{x}(\zeta)}{\log X}\leq \frac{1+\frac{\log 2}{\log x_{0}}}{k-\frac{\log 2}{\log x_{0}}},
\]
and with $X\to\infty$ or equivalently $x_{0}\to\infty$ indeed $\widehat{\lambda}_{k}(\zeta)\leq 1/k$.

Case 2: $\lambda_{1}(\zeta)\leq k$. Define $X:=(1/2)D_{0}^{-1}x_{0}^{T}$, 
and again write $x\leq X$ in base $x_{0}$ and define $i,u,b_{.}$ as in case 1.
We have $0\leq i\leq \lfloor T\rfloor\leq T$, such that from $T<k$ we infer that $1\leq u\leq k$.
We have \eqref{eq:sheyn} precisely as in case 1, such as
\begin{equation} \label{eq:sheyn3}
\left\vert x\left(\zeta^{u}-\frac{y_{0}^{u}}{x_{0}^{u}}\right)\right\vert
\leq X\left\vert \zeta^{u}-\frac{y_{0}^{u}}{x_{0}^{u}} \right\vert
\leq \frac{1}{2D_{0}}x_{0}^{T}\cdot D_{0}x_{0}^{-T-1}=\frac{1}{2}x_{0}^{-1}.
\end{equation}
So combining \eqref{eq:sheyn} and \eqref{eq:sheyn3} and triangular inequality yields
\[
M_{x}(\zeta):=\max_{1\leq j\leq k} \Vert \zeta^{j}x\Vert\geq \Vert \zeta^{u}x\Vert\geq \frac{1}{2}x_{0}^{-1}.
\]
Again we conclude
\[
\widehat{\lambda}_{k}(\zeta)\leq \liminf_{X\to\infty}\max_{1\leq x\leq X} 
-\frac{\log M_{x}(\zeta)}{\log X}
\leq \frac{1+\frac{\log 2}{\log x_{0}}}{T-\frac{\log D_{0}+\log 2}{\log x_{0}}}.
\]
As we may choose $T$ arbitrarily close to $\lambda_{1}(\zeta)$, indeed
$\widehat{\lambda}_{k}(\zeta)\leq 1/\lambda_{1}(\zeta)$ follows again with 
$X\to\infty$ or equivalently $x_{0}\to\infty$.
\end{proof}

Next we prove Theorem~\ref{spektrum} using Lemma~\ref{lemma2} and Lemma~\ref{help}.

\begin{proof}[Proof of Theorem~\ref{spektrum}]
In view of \eqref{eq:holds}, for the assertion on $\lambda_{k}(\zeta)$
we only have to show that provided that $\lambda_{k}(\zeta)>1$ holds, we have 
\begin{equation} \label{eq:dieletzte}
\lambda_{k}(\zeta)\leq \frac{\lambda_{1}(\zeta)-k+1}{k}.
\end{equation}
The definition of the quantity $\lambda_{k}(\zeta)$ implies that for any fixed 
$1<T<\lambda_{k}(\zeta)$, the inequality
\begin{equation} \label{eq:xy}
\max_{1\leq j\leq k} \Vert\zeta^{j} x\Vert \leq x^{-T}
\end{equation}
has arbitrarily large integer solutions $x$. One checks that for any $\tau>0$ and sufficiently large 
$x>~\hat{x}(\tau,T):=\tau^{1/(1-T)}$ we have $x^{-T}<\tau x^{-1}$. 
Choosing $\tau\leq C_{0}$ with $C_{0}<1/2$ from Lemma~\ref{lemma2}, condition \eqref{eq:xy} ensures 
we may apply both Lemma~\ref{lemma2} and
Lemma~\ref{help} for $x\geq \hat{x}$, with coinciding pairs $x_{0},y_{0}$ such that 
$y_{0}/x_{0}$ is the reduced fraction $y/x$. 
Further let $M_{0}$ be as in Lemma~\ref{help}. Writing $M_{0}=x_{0}^{\eta}$, by Lemma~\ref{lemma2}  
we infer $\eta \geq k-1$. Moreover, define $T_{0}$ implicitly by 
$x_{0}^{-T_{0}}=\vert \zeta x_{0}-y_{0}\vert$, i.e.
\[
T_{0}=-\frac{\log \vert \zeta x_{0}-y_{0}\vert}{\log x_{0}}.
\]
The derived properties yield
\[
T\leq -\frac{\log \Vert \zeta x\Vert}{\log x}= 
-\frac{\log(M_{0}\vert \zeta x_{0}-y_{0}\vert)}{\log(M_{0}x_{0})}
\leq \frac{T_{0}-\eta}{1+\eta}
\leq \frac{T_{0}-(k-1)}{1+(k-1)}=\frac{T_{0}-k+1}{k}.
\]
Since this is true for arbitrarily large values of $x$ (and thus $x_{0}$)
and we may choose $T$ arbitrarily close to $\lambda_{k}(\zeta)$, the definition
of $T_{0}$ implies \eqref{eq:dieletzte}. 

Since $\lambda_{1}(\zeta)=k\lambda_{k}(\zeta)+k-1>2k-1\geq k$, 
the assertion on $\widehat{\lambda}_{j}(\zeta)$ follows from Theorem~\ref{theo}.
\end{proof}

We actually proved something stronger than Theorem~\ref{spektrum}.
We point out the more general results evolved from the proof as a corollary.

\begin{corollary}  \label{korollar}
Let $k\geq 2$ be an integer and $\zeta$ be a real number. For any fixed $T>1$,
there exists $\hat{x}=\hat{x}(T,\zeta)$, such that the estimate
\[
\max_{1\leq j\leq k}\Vert \zeta^{j} x\Vert \leq x^{-T}
\]
for an integer $x\geq \hat{x}$ implies the
existence of $x_{0},y_{0},M_{0}$ as in Lemma~{\upshape\ref{help}} with the properties
\begin{equation}  \label{eq:allesgilt}
x\geq x_{0}^{k}, \qquad M_{0}\geq x_{0}^{k-1}, \qquad \vert \zeta x_{0}-y_{0}\vert\leq x_{0}^{-kT-k+1}.
\end{equation}
Similarly, if for $C_{0}=C_{0}(k,\zeta)$ from Lemma~{\upshape\ref{lemma2}} the inequality
\[
\max_{1\leq j\leq k}\Vert \zeta^{j} x\Vert < C_{0}\cdot x^{-1}
\]
has an integer solution $x>0$, then {\upshape(\ref{eq:allesgilt})} holds with $T=1$. 
\end{corollary}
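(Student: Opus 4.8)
The plan is to extract Corollary~\ref{korollar} from the proof of Theorem~\ref{spektrum} that was just completed, essentially by bookkeeping the intermediate quantities rather than passing to the limit. First I would fix $T>1$ and apply the argument verbatim: choose $\tau=C_0=C_0(k,\zeta)<1/2$ from Lemma~\ref{lemma2}, and observe that for $x>\hat x(\tau,T):=\tau^{1/(1-T)}$ the hypothesis $\max_{1\le j\le k}\Vert\zeta^j x\Vert\le x^{-T}$ forces $\max_{1\le j\le k}\Vert\zeta^j x\Vert<C_0 x^{-1}$, so both Lemma~\ref{lemma2} and Lemma~\ref{help} apply with the \emph{same} pair $x_0,y_0$ (the reduced form of $y/x$, where $y$ is the nearest integer to $\zeta x$), and the same $M_0=x/x_0$. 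Lemma~\ref{lemma2} gives $x_0^k\mid x$, hence immediately $x\ge x_0^k$, which is the first claim in \eqref{eq:allesgilt}. Writing $x=M_0 x_0$ and $x_0^k\mid x$ yields $x_0^{k-1}\mid M_0$, so $M_0\ge x_0^{k-1}$, the second claim. Here I would need to be a little careful that $x_0$ is genuinely the quantity appearing in \emph{both} lemmas; this is exactly the coincidence already invoked in the proof of Theorem~\ref{spektrum}, coming from the fact that in Lemma~\ref{help} the relevant denominator is the reduced form of $y/x$.

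For the third inequality in \eqref{eq:allesgilt}, I would retrace the chain of inequalities in the proof of Theorem~\ref{spektrum}. With $T_0$ defined by $x_0^{-T_0}=\vert\zeta x_0-y_0\vert$ and $\eta$ defined by $M_0=x_0^\eta$, the computation there shows
\[
T\le -\frac{\log\Vert\zeta x\Vert}{\log x}=-\frac{\log(M_0\vert\zeta x_0-y_0\vert)}{\log(M_0 x_0)}\le\frac{T_0-\eta}{1+\eta}\le\frac{T_0-(k-1)}{k},
\]
using $\eta\ge k-1$ (equivalent to $M_0\ge x_0^{k-1}$) together with the monotonicity of $t\mapsto (T_0-t)/(1+t)$. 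Solving $T\le (T_0-k+1)/k$ for $T_0$ gives $T_0\ge kT+k-1$, i.e. $\vert\zeta x_0-y_0\vert=x_0^{-T_0}\le x_0^{-kT-k+1}$, which is precisely the third assertion. The $T=1$ version is handled identically: the hypothesis $\max_{1\le j\le k}\Vert\zeta^j x\Vert<C_0 x^{-1}$ is exactly what makes Lemma~\ref{lemma2} and Lemma~\ref{help} applicable with no lower bound on $x$, so the same conclusions $x\ge x_0^k$, $M_0\ge x_0^{k-1}$, and $\vert\zeta x_0-y_0\vert\le x_0^{-2k+1}$ follow, the last from the same inequality chain with $T$ replaced by $1$ — or one can simply note the $T=1$ case is monotone-limit-compatible with $T>1$.

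The only real subtlety — and the step I expect to require the most care — is the legitimacy of applying Lemma~\ref{lemma2} and Lemma~\ref{help} \emph{simultaneously with a common $x_0$}, together with making sure the monotone estimate for $-\log(M_0\vert\zeta x_0-y_0\vert)/\log(M_0 x_0)$ is valid (this needs $\vert\zeta x_0-y_0\vert<1$ and $M_0\ge 1$, both automatic here for $x$ large, resp.\ in the $C_0$ case always). Everything else is a direct unwinding of inequalities already carried out. I would therefore write the proof as: ``This was established in the course of the proof of Theorem~\ref{spektrum}; we record the relevant bounds. Retaining the notation $x_0,y_0,M_0,T_0,\eta$ from there, Lemma~\ref{lemma2} gives $x_0^k\mid x$, whence $x\ge x_0^k$ and $M_0=x/x_0\ge x_0^{k-1}$, i.e.\ $\eta\ge k-1$; and the displayed chain of inequalities yields $T\le(T_0-k+1)/k$, equivalently $\vert\zeta x_0-y_0\vert\le x_0^{-kT-k+1}$. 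The final sentence is the special case $T=1$, for which the hypothesis $\max_j\Vert\zeta^j x\Vert<C_0 x^{-1}$ directly enables both lemmas.'' — padding in only the one-line verifications $\hat x(\tau,T)=\tau^{1/(1-T)}$ and $x^{-T}<\tau x^{-1}$ for $x>\hat x$ that link the two hypotheses.
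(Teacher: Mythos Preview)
Your proposal is correct and is exactly the approach the paper intends: the corollary is presented there without a separate proof, as a record of the intermediate inequalities obtained while proving Theorem~\ref{spektrum}. Your bookkeeping of $x_0^k\mid x\Rightarrow x\ge x_0^k$, $M_0=x/x_0\ge x_0^{k-1}$, and the rearrangement $T\le(T_0-k+1)/k\Rightarrow T_0\ge kT+k-1$ (with the $T=1$ case handled by $\Vert\zeta x\Vert<C_0x^{-1}<x^{-1}$) matches the paper's derivation line by line.
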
 

For direct consequences of Corollary~\ref{korollar}, see Section~\ref{vier}.

\section{Diophantine approximation in fractal sets}  \label{cantorset}

\subsection{Definitions and results} \label{deandre}
The middle-third Cantor set $\mathscr{C}$
is defined as the real numbers $a$ in $[0,1]$ that can be written in the form
\[
a=c_{1}3^{-1}+c_{2}3^{-2}+\cdots, \qquad \qquad c_{i}\in{\{0,2\}}.
\]  
The spectrum of the quantity $\lambda_{k}(\zeta)$ with the restriction 
that $\zeta$ belongs to the Cantor set has been studied. For $k=1$, the question is solved
by the following constructive result~\cite[Theorem~2]{buge}. 
We use a slightly different notation than the one in~\cite{buge}
for correlation with our upcoming results. 

\begin{theorem}[Bugeaud]  \label{bugerg}
Let $\tau\in{[1,\infty)}$ and $\alpha>0$. Any number
\begin{equation}  \label{eq:anna}
\zeta=2\sum_{n\geq 1} 3^{-\lceil\alpha(1+\tau)^{n}\rceil}
\end{equation}
belongs to $\mathscr{C}$ and satisfies $\lambda_{1}(\zeta)=\tau$. In particular, the spectrum
of $\lambda_{1}$ restricted to $\mathscr{C}$ equals $[1,\infty]$. 
\end{theorem}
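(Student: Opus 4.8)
The plan is to verify the two claims of Theorem~\ref{bugerg} separately: first that $\zeta\in\mathscr{C}$, which is immediate, and then that $\lambda_{1}(\zeta)=\tau$, which is the substantive part. Write $a_{n}:=\lceil\alpha(1+\tau)^{n}\rceil$, so that $\zeta=2\sum_{n\geq 1}3^{-a_{n}}$. Since every digit in the ternary expansion of $\zeta$ lies in $\{0,2\}$ (the exponents $a_{n}$ are strictly increasing because $\tau\geq 1$ forces $(1+\tau)^{n}$ to grow by a factor $\geq 2$, hence $a_{n+1}>a_{n}$), we have $\zeta\in\mathscr{C}$ by definition. For the approximation statement, I would introduce the natural rational approximants
\[
\frac{p_{n}}{q_{n}}:=2\sum_{m=1}^{n}3^{-a_{m}},\qquad q_{n}=3^{a_{n}},
\]
and estimate the tail: $\bigl|\zeta-p_{n}/q_{n}\bigr|=2\sum_{m>n}3^{-a_{m}}\asymp 3^{-a_{n+1}}$, since the tail is dominated by its first term up to a bounded factor. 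Because $a_{n+1}=\lceil\alpha(1+\tau)^{n+1}\rceil=\lceil(1+\tau)\cdot\alpha(1+\tau)^{n}\rceil$, one gets $a_{n+1}/a_{n}\to 1+\tau$, and more precisely $a_{n+1}=(1+\tau)a_{n}+O(1)$. Hence
\[
\Bigl|\zeta-\frac{p_{n}}{q_{n}}\Bigr|=q_{n}^{-(1+\tau)+o(1)}=q_{n}^{-1-\tau+o(1)},
\]
which gives $\lambda_{1}(\zeta)\geq\tau$ directly from the definition (taking $X=q_{n}\to\infty$), once one checks $p_{n}/q_{n}\neq\zeta$, i.e. that $\zeta$ is irrational — clear since its ternary expansion is non-eventually-periodic.

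For the reverse inequality $\lambda_{1}(\zeta)\leq\tau$, the strategy is to show that the $q_{n}$ are essentially the only good denominators, using Lemma~\ref{help}. Suppose $x$ is a large integer with $\Vert\zeta x\Vert<\tfrac12 x^{-1}$; then Lemma~\ref{help} tells us $y_{0}/x_{0}$ is a convergent of $\zeta$ with $x_{0}\leq x$, $x=M_{0}x_{0}$, and $\Vert\zeta x\Vert=M_{0}\Vert\zeta x_{0}\Vert$. So it suffices to bound $-\log\Vert\zeta q\Vert/\log q$ from above (by $\tau+o(1)$) over convergent denominators $q$, and to check that the multiplication by $M_{0}$ only makes the exponent worse. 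The key point is a gap argument: for $q$ with $q_{n}\leq q<q_{n+1}$, the best rational approximation to $\zeta$ with denominator $\leq q$ is $p_{n}/q_{n}$, so $\Vert\zeta q\Vert\geq$ something like $q\cdot\bigl|\zeta-p_{n}/q_{n}\bigr|$ minus lower-order terms — more carefully, any rational $p/q$ with $q<q_{n+1}=3^{a_{n+1}}$ that is not equal to $p_{n}/q_{n}$ differs from $p_{n}/q_{n}$ by at least $1/(q q_{n})$, which dominates the tail $\asymp 3^{-a_{n+1}}$ once $q<3^{a_{n+1}}$; hence $\bigl|\zeta-p/q\bigr|\gg 1/(q q_{n})$, giving $\Vert\zeta q\Vert\gg q_{n}^{-1}\geq (q^{1/(1+\tau)+o(1)})^{-1}\cdot(\text{harmless})$. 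Combined with the $M_{0}$-bookkeeping via Lemma~\ref{help}, this yields $\lambda_{1}(\zeta)\leq\tau$.

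The main obstacle is the reverse inequality, specifically making the gap/no-intermediate-good-approximation argument fully rigorous: one must control approximations $p/q$ with $q$ in the full range $q_{n}\leq q<q_{n+1}$, not merely at $q=q_{n}$, and correctly track how the factor $M_{0}=x/x_{0}$ in Lemma~\ref{help} enters, since $x$ could be a large multiple of a small convergent denominator and one needs $\Vert\zeta x\Vert=M_{0}\Vert\zeta x_{0}\Vert$ to not accidentally beat $x^{-\tau}$. The ceiling function in the exponents $a_{n}=\lceil\alpha(1+\tau)^{n}\rceil$ introduces only an additive $O(1)$ error, hence an $o(1)$ error in the exponent of $q_{n}$, so it does not affect the limiting value $\tau$; one just carries it along. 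Finally the "in particular" assertion — that the spectrum of $\lambda_{1}$ restricted to $\mathscr{C}$ is $[1,\infty]$ — follows by letting $\tau$ range over $[1,\infty)$ and handling $\tau=\infty$ by a separate, easy construction (e.g. exponents growing faster than any geometric sequence, which makes $\zeta$ a Liouville number in $\mathscr{C}$).
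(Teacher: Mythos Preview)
This theorem is not proved in the present paper: it is quoted from Bugeaud's earlier article~\cite{buge} (as Theorem~2 there), so there is no in-paper argument to compare against. Your lower bound $\lambda_{1}(\zeta)\geq\tau$ and the membership $\zeta\in\mathscr{C}$ are fine, as is the treatment of the endpoint $\tau=\infty$.

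The gap is in your upper bound, and it is not merely a matter of bookkeeping. Your key claim is that for $p/q\neq p_{n}/q_{n}$ with $q<q_{n+1}=3^{a_{n+1}}$, the quantity $1/(qq_{n})$ dominates the tail $\asymp 3^{-a_{n+1}}$; but this inequality $1/(qq_{n})\gg 3^{-a_{n+1}}$ only holds for $q\ll 3^{a_{n+1}-a_{n}}$, not on the whole range $q<3^{a_{n+1}}$. On the remaining interval $3^{a_{n+1}-a_{n}}\ll q<q_{n+1}$ your triangle-inequality step collapses, and comparing instead with $p_{n+1}/q_{n+1}$ yields only $\Vert\zeta q\Vert\gg q_{n+1}^{-1}$, hence an exponent bound of $a_{n+1}/(a_{n+1}-a_{n})\to 1+1/\tau$. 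For $\tau<(1+\sqrt{5})/2$ this exceeds $\tau$, so the sketch as written does not close. In particular, there \emph{are} convergents of $\zeta$ strictly between $q_{n}$ and $q_{n+1}$, so your heuristic ``the best rational approximation with denominator $\leq q$ is $p_{n}/q_{n}$'' is false on the upper part of the range.

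Bugeaud's original proof does not proceed by this kind of bare gap argument; it relies on determining the continued fraction expansion of $\zeta$ explicitly via a variant of the Folding Lemma (see the present paper's Remark~\ref{remarkch} and the references~\cite{buge},~\cite{fra}), which pins down all intermediate convergents and their approximation quality. That extra structural input is what makes the upper bound go through for every $\tau\geq 1$, and it is genuinely missing from your outline.
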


Indeed, the case $\lambda_{1}(\zeta)=\infty$ not explicitly mentioned in~\cite[Theorem~2]{buge} 
is obtained similarly by a sequence with hyper-exponential growth in the exponent, 
such that we can include the value $\infty$ in Theorem~\ref{bugerg}.

The best current result concerning the spectrum of $\lambda_{k}$ within $\mathscr{C}$
for $k\geq 2$, which is~\cite[Theorem~BL]{bug}, originates in $\zeta$ as in \eqref{eq:anna}
incorporating~\cite[Theorem~7.7]{bugg}.
\begin{theorem}[Bugeaud, Laurent] \label{alteschranke}
Let $k\geq 2$ be an integer. The spectrum of $\lambda_{k}$ among $\zeta$ in $\mathscr{C}$ contains 
the interval $[(1+\sqrt{4k^{2}+1})/(2k),\infty]$.
\end{theorem}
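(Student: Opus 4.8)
The strategy is to show that the numbers $\zeta$ produced by Bugeaud's construction \eqref{eq:anna}, for a suitable range of the parameter $\tau$, actually realize all values in $[(1+\sqrt{4k^2+1})/(2k),\infty]$ as values of $\lambda_k(\zeta)$; this will exploit the rigidity established in Theorem~\ref{spektrum}. Concretely, I would fix $\zeta$ as in \eqref{eq:anna} for some $\tau\geq 1$, so that $\lambda_1(\zeta)=\tau$ by Theorem~\ref{bugerg}, and $\zeta\in\mathscr{C}$. The point is to compute $\lambda_k(\zeta)$ for these explicit $\zeta$. On one hand, whenever $\tau$ is large enough that $(\tau-k+1)/k>1$, i.e. $\tau>2k-1$, inequality \eqref{eq:holds} gives $\lambda_k(\zeta)\geq(\lambda_1(\zeta)-k+1)/k=(\tau-k+1)/k>1$, so Theorem~\ref{spektrum} applies and yields $\lambda_k(\zeta)=(\tau-k+1)/k$ exactly. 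As $\tau$ ranges over $(2k-1,\infty]$, the quantity $(\tau-k+1)/k$ ranges over $(1,\infty]$; so the interval $(1,\infty]$ is already in the spectrum of $\lambda_k$ restricted to $\mathscr{C}$. This recovers (and slightly sharpens) the $[1,\infty]$-in-the-spectrum statement from \cite[Theorem~2]{bug} within $\mathscr{C}$, but does not yet reach below $1$, whereas $(1+\sqrt{4k^2+1})/(2k)$ is strictly below... wait — it is in fact \emph{above} $1$ for $k\geq 1$? Let me keep this as the crux below.

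The genuinely new content is therefore the lower portion of the interval, namely values of $\lambda_k(\zeta)$ in $[(1+\sqrt{4k^2+1})/(2k),\,1]$ (and the consistency check that $(1+\sqrt{4k^2+1})/(2k)$ is the exact threshold that Bugeaud--Laurent's method reaches). Here Theorem~\ref{spektrum} is unavailable since $\lambda_k(\zeta)\leq 1$, so one must analyze the approximation directly. For $\zeta$ as in \eqref{eq:anna} with $\tau\in[1,2k-1]$, the natural approximants to $\zeta^j$ arise from truncating the series: if $p_n/q_n$ denotes the fraction obtained by summing the first $n$ terms, then $q_n=3^{\lceil\alpha(1+\tau)^n\rceil}$ and $\Vert\zeta q_n\Vert\asymp q_n^{-\tau}$. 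The quantities $\Vert\zeta^j q_n^{?}\Vert$ must be controlled by combining \cite[Theorem~7.7]{bugg} with the gap structure of the exponents; this is exactly the computation underlying \cite[Theorem~BL]{bug}, and the bound $(1+\sqrt{4k^2+1})/(2k)$ emerges from optimizing the growth rate $1+\tau$ of the exponent sequence against the cost of simultaneously approximating $\zeta,\zeta^2,\dots,\zeta^k$ by the $k$-th powers $q_n^k$ of a common denominator — balancing the "surplus" $x\geq q_n^k$ against the strength $q_n^{-\tau}$ of the one-dimensional approximation. Explicitly, one finds $\lambda_k(\zeta)$ equals the positive root of a quadratic in $\tau$, and letting $\tau$ run over $[1,\infty)$ sweeps out the claimed interval. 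I would present this by quoting \cite[Theorem~7.7]{bugg} and \cite{bug} as a black box computing $\lambda_k$ for these $\zeta$, rather than redoing it.

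The main obstacle is the lower end: one must verify that the explicit numbers \eqref{eq:anna} cannot be approximated \emph{better} than the construction forces, i.e. establish the upper bound $\lambda_k(\zeta)\leq(1+\sqrt{4k^2+1})/(2k)$ when $\tau=1$. This requires a Lemma~\ref{lemma2}-type argument showing that any good simultaneous approximation $x$ to $(\zeta,\dots,\zeta^k)$ must essentially be (a multiple of) a $k$-th power $q_n^k$ of a convergent denominator of $\zeta$, together with the monotone-convergence control of $\Vert\zeta q_n\Vert$ from the sparse series; Corollary~\ref{korollar} is tailored for exactly this and would be the tool of choice, reducing the problem to the one-dimensional approximation exponent of $\zeta$, which is $\tau$ by \cite[Theorem~2]{buge}. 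Once the upper and lower bounds match for each $\tau$, surjectivity onto the interval is immediate by continuity of $\tau\mapsto\lambda_k(\zeta)$ in the resulting closed form, and the endpoint $\infty$ is handled by the hyper-exponential variant noted after Theorem~\ref{bugerg}.
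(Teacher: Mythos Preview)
Two things need to be said.

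First, the paper does not prove Theorem~\ref{alteschranke}: it is quoted as a known result of Bugeaud--Laurent, obtained in~\cite[Theorem~BL]{bug} by applying~\cite[Theorem~7.7]{bugg} to the numbers~\eqref{eq:anna}. So there is no ``paper's own proof'' to compare with; the paper's contribution is rather to \emph{supersede} Theorem~\ref{alteschranke} by Theorem~\ref{schwach}, which shows via Theorem~\ref{spektrum} that the spectrum of $\lambda_k$ on $\mathscr{C}$ contains the larger interval $(1,\infty]$.

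Second, your proposal contains a genuine arithmetic slip that sends you off in the wrong direction. You correctly observe that Theorem~\ref{spektrum} combined with Theorem~\ref{bugerg} and~\eqref{eq:holds} gives $\lambda_k(\zeta)=(\tau-k+1)/k$ for the numbers~\eqref{eq:anna} whenever $\tau>2k-1$, and that this sweeps out $(1,\infty]$. You then hesitate about whether $(1+\sqrt{4k^2+1})/(2k)$ lies below $1$, and proceed as though it does, devoting two paragraphs to a ``lower portion'' $[(1+\sqrt{4k^2+1})/(2k),\,1]$. But this interval is empty: for every $k\geq 1$ one has
\[
\frac{1+\sqrt{4k^2+1}}{2k}>1 \iff \sqrt{4k^2+1}>2k-1 \iff 4k^2+1>4k^2-4k+1 \iff 4k>0,
\]
which is always true. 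Hence $(1,\infty]\supset[(1+\sqrt{4k^2+1})/(2k),\infty]$, and your first paragraph already proves Theorem~\ref{alteschranke} in full --- indeed it proves the stronger Theorem~\ref{schwach}, exactly as the paper does. Everything from ``The genuinely new content is therefore the lower portion'' onward is chasing a phantom and should be deleted. The only caveat is that your argument relies on Theorem~\ref{spektrum}, the paper's main new tool, so it is anachronistic as a proof of the historical Bugeaud--Laurent result; the original argument goes through~\cite[Theorem~7.7]{bugg} instead and does not use Theorem~\ref{spektrum}.
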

Again, as conjectured for real numbers $\zeta$ in Problem~\ref{problem1}, there is reason 
to believe that the spectrum actually equals $[1/k,\infty]$.
The following immediate consequence of Theorem~\ref{spektrum}
yields an improvement of Theorem~\ref{alteschranke}.

\begin{theorem} \label{schwach}
Let $k\geq 2$ be an integer and $\alpha>0, \rho>0$. Then
$\zeta=2\sum_{n\geq 1} 3^{-\lceil\alpha (k(1+\rho))^{n}\rceil}$
belongs to the Cantor set. 
If $\rho\in{(0,1]}$, then
\begin{equation} \label{eq:warm}
\max\left\{\rho,\frac{1}{k}\right\}\leq \lambda_{k}(\zeta)\leq 1.
\end{equation}
If $\rho\in{(1,\infty]}$, we have equality
\begin{equation}  \label{eq:wbein}
\lambda_{k}(\zeta)=\rho.
\end{equation}
         
\end{theorem}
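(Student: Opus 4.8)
The plan is to reduce Theorem~\ref{schwach} to a combination of Theorem~\ref{bugerg}, Theorem~\ref{spektrum}, and the already-quoted monotonicity/Dirichlet bounds, using the substitution $\tau = k(1+\rho)-1$. First I would observe that, writing $\tau := k(1+\rho)-1$, the exponent sequence $\lceil \alpha(k(1+\rho))^{n}\rceil = \lceil \alpha(1+\tau)^{n}\rceil$ is exactly of the form appearing in \eqref{eq:anna}, so Theorem~\ref{bugerg} applies verbatim: $\zeta$ lies in $\mathscr{C}$ and $\lambda_{1}(\zeta)=\tau=k(1+\rho)-1$. (One must note $\tau\ge 1$: since $k\ge 2$ and $\rho>0$ we have $\tau = k + k\rho - 1 \ge 2-1=1$, with equality only in a degenerate limit, so Theorem~\ref{bugerg} is legitimately invoked; for $\rho=\infty$ one uses the stated extension of Theorem~\ref{bugerg} to the value $\infty$.) This disposes of the membership-in-$\mathscr{C}$ claim and pins down $\lambda_{1}(\zeta)$ for free.

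Next I would split on whether $\rho>1$ or $\rho\le 1$, mirroring the structure of Corollary~\ref{bestens}. In the case $\rho\in(1,\infty]$: then $(\lambda_{1}(\zeta)-k+1)/k = (k(1+\rho)-1-k+1)/k = \rho > 1$. By \eqref{eq:holds} with $n=1$ we get $\lambda_{k}(\zeta)\ge (\lambda_{1}(\zeta)-k+1)/k = \rho > 1$, hence $\lambda_{k}(\zeta)>1$ and Theorem~\ref{spektrum} is applicable; it yields $\lambda_{k}(\zeta) = (\lambda_{1}(\zeta)-k+1)/k = \rho$, which is \eqref{eq:wbein}. (When $\rho = \infty$, $\lambda_{1}(\zeta)=\infty$ and \eqref{eq:holds} already gives $\lambda_{k}(\zeta)=\infty=\rho$ directly, without needing Theorem~\ref{spektrum}; I would remark this so the argument covers the endpoint cleanly.)

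In the case $\rho\in(0,1]$: the lower bound $\lambda_{k}(\zeta)\ge \max\{\rho,1/k\}$ comes from two sources — $\lambda_{k}(\zeta)\ge 1/k$ is \eqref{eq:dirichlet} (valid for all irrational $\zeta$, and $\zeta$ is irrational since it is even a Liouville-type transcendental), while $\lambda_{k}(\zeta)\ge (\lambda_{1}(\zeta)-k+1)/k = \rho$ is again \eqref{eq:holds} with $n=1$. For the upper bound $\lambda_{k}(\zeta)\le 1$: suppose for contradiction $\lambda_{k}(\zeta)>1$; then Theorem~\ref{spektrum} forces $\lambda_{k}(\zeta) = (\lambda_{1}(\zeta)-k+1)/k = \rho \le 1$, contradicting $\lambda_{k}(\zeta)>1$. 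Hence $\lambda_{k}(\zeta)\le 1$, establishing \eqref{eq:warm}.

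I do not expect any serious obstacle here — the whole statement is, as the text says, an "immediate consequence" of Theorem~\ref{spektrum} together with the known constructive Theorem~\ref{bugerg}. The only points requiring a moment's care are: (i) verifying $\tau\ge 1$ so that Theorem~\ref{bugerg} genuinely applies, and tracking the endpoint $\rho=\infty$ separately; (ii) being careful that in the regime $\rho\le 1$ one cannot conclude an exact value — only the sandwich \eqref{eq:warm} — precisely because the hypothesis $\lambda_{k}(\zeta)>1$ of Theorem~\ref{spektrum} may fail, exactly as flagged in Remark~\ref{rem1}. These are bookkeeping matters rather than real difficulties, and no new estimate beyond what is already in the excerpt is needed.
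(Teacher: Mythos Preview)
Your proposal is correct and follows essentially the same route as the paper. The only cosmetic difference is that the paper packages the case split through Corollary~\ref{bestens} (checking whether $k<k_{0}$ or $k\ge k_{0}$ for $k_{0}=\lceil(\lambda_{1}(\zeta)+1)/2\rceil$), whereas you invoke Theorem~\ref{spektrum}, \eqref{eq:holds}, and \eqref{eq:dirichlet} directly; since Corollary~\ref{bestens} is itself just a repackaging of those same ingredients, the arguments are identical in substance.
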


\begin{proof}
By Theorem~\ref{bugerg} with $\tau:=k\rho+k-1>1$, we have $\lambda_{1}(\zeta)=k\rho+k-1$. 
If $\rho>1$, then $k< k_{0}:=\lceil (\lambda_{1}(\zeta)+1)/2\rceil$ such that with Corollary~\ref{bestens},
we obtain \eqref{eq:wbein}. If $\rho\in{(0,1]}$, then $k\geq k_{0}$ and the 
assertion \eqref{eq:warm} again follows from Corollary~\ref{bestens}. 
\end{proof}

Theorem~\ref{schwach} obviously yields the improvement
of Theorem~\ref{alteschranke} so that the spectrum of $\lambda_{k}$ contains $[1,\infty]$.
However, we want to prove the more general statement Theorem~\ref{meinsatz}. It 
is more flexible in the choice of suitable $\zeta$ and extends Theorem~\ref{schwach} 
to expansions in an arbitrary base.

\begin{theorem}  \label{meinsatz}
Let $k\geq 2, b\geq 2$ be integers and $\rho\in{(0,\infty]}$. 
Let $(a_{n})_{n\geq 1}$ be a strictly increasing sequence of positive integers with the property
\begin{equation} \label{eq:folge}
\lim_{n\to\infty} \frac{a_{n+1}}{a_{n}}=k(\rho+1).
\end{equation}
Let
\begin{equation} \label{eq:zeter}
\zeta=\sum_{n\geq 1} b^{-a_{n}}.
\end{equation}
If $\rho\in{(0,1)}$, then
\begin{equation} \label{eq:arm}
\max\left\{\frac{1}{k},\rho\right\}\leq \lambda_{k}(\zeta)\leq 1.
\end{equation}
If $\rho\in{[1,\infty]}$, we have equality
\begin{equation}  \label{eq:bein}
\lambda_{k}(\zeta)=\rho.
\end{equation}                                                               
Moreover, if $\rho\geq 1/k$, then $\widehat{\lambda}_{k}(\zeta)=1/k$.
\end{theorem}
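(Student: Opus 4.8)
The plan is to prove Theorem~\ref{meinsatz} in two halves: first establish the value (or the range) of $\lambda_{k}(\zeta)$, and then deduce the uniform statement $\widehat{\lambda}_{k}(\zeta)=1/k$. For the first half I would compute $\lambda_{1}(\zeta)$ directly from the lacunary series \eqref{eq:zeter}. By a standard argument (the same one underlying Theorem~\ref{bugerg}), the partial sums $p_{N}/q_{N}$ with $q_{N}=b^{a_{N}}$ are extremely good rational approximations to $\zeta$: the tail is of size about $b^{-a_{N+1}}$, so $\vert \zeta - p_{N}/q_{N}\vert \asymp b^{-a_{N+1}} = q_{N}^{-a_{N+1}/a_{N}}$, and \eqref{eq:folge} gives exponent tending to $k(\rho+1)$. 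One must also check there are no substantially better rational approximations with denominators between $q_{N}$ and $q_{N+1}$ — this follows because the gaps $a_{n+1}-a_{n}\to\infty$ force the digit structure to be ``spread out'', so any good approximation must essentially be a partial sum. Hence $\lambda_{1}(\zeta)=k(\rho+1)-1=k\rho+k-1$.

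Once $\lambda_{1}(\zeta)=k\rho+k-1$ is established, the rest of the first half is exactly the argument already used for Theorem~\ref{schwach}: apply Corollary~\ref{bestens} with $\lambda_{1}(\zeta)=k\rho+k-1$. When $\rho>1$ we have $k<k_{0}=\lceil(\lambda_{1}(\zeta)+1)/2\rceil$, so \eqref{eq:systemm1} gives $\lambda_{k}(\zeta)=(\lambda_{1}(\zeta)-k+1)/k=\rho$, which is \eqref{eq:bein} for $\rho>1$; the boundary case $\rho=1$ can be absorbed by a limiting/monotonicity argument or handled directly since then $\lambda_{1}(\zeta)=2k-1$ forces $\lambda_{k}(\zeta)=1$ as in the proof of Corollary~\ref{ergebnis}. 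When $\rho\in(0,1)$ we have $k\geq k_{0}$, and \eqref{eq:systemm2} together with the trivial lower bound \eqref{eq:holds} (which yields $\lambda_{k}(\zeta)\geq(\lambda_{1}(\zeta)-k+1)/k=\rho$) and Dirichlet's bound $\lambda_{k}(\zeta)\geq 1/k$ gives precisely \eqref{eq:arm}.

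For the uniform statement, assume $\rho\geq 1/k$, so $\lambda_{1}(\zeta)=k\rho+k-1\geq k$. Theorem~\ref{theo} then gives $\widehat{\lambda}_{k}(\zeta)\leq \max\{1/k,1/\lambda_{1}(\zeta)\}=\max\{1/k, 1/(k\rho+k-1)\}=1/k$. The reverse inequality $\widehat{\lambda}_{k}(\zeta)\geq 1/k$ is just Dirichlet \eqref{eq:dirichlet} (valid since $\zeta$, being a Liouville-type or at least non-algebraic number of any degree — it is transcendental by the lacunarity — is irrational and not algebraic of degree $\leq k$). Combining, $\widehat{\lambda}_{k}(\zeta)=1/k$.

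The main obstacle is the clean determination $\lambda_{1}(\zeta)=k\rho+k-1$ from the \emph{asymptotic} ratio condition \eqref{eq:folge} rather than an exact formula $a_{n}=\lceil\alpha(k(\rho+1))^{n}\rceil$ as in Theorem~\ref{bugerg}: one has to show that $\limsup$ and $\liminf$ of the approximation exponent along all integer denominators both equal $k(\rho+1)-1$. The $\limsup\geq$ direction is immediate from the partial sums. For the $\limsup\leq$ (equivalently, ruling out that some non-partial-sum denominator does unexpectedly well) and for the fact that the exponent does not drop below the claimed value infinitely often, I would use that for large $n$ the ratio $a_{n+1}/a_{n}$ is within $\epsilon$ of $k(\rho+1)$, run the elementary continued-fraction / gap analysis of Lemma~\ref{help} and \cite[Satz~11]{perron} to show every sufficiently good rational approximation has denominator a power of $b$ coming from truncating the series, and then let $\epsilon\to 0$. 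This is routine but is the only part not already packaged by the corollaries; everything downstream is a direct citation of Theorem~\ref{theo}, Corollary~\ref{bestens}, and \eqref{eq:holds}.
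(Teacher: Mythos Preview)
Your approach is correct but takes a different organizational route from the paper. Your plan---compute $\lambda_{1}(\zeta)=k\rho+k-1$ exactly, then invoke Corollary~\ref{bestens}---is precisely the argument the paper uses for the weaker Theorem~\ref{schwach}, where the value of $\lambda_{1}(\zeta)$ is supplied ready-made by Theorem~\ref{bugerg}. For general sequences satisfying only \eqref{eq:folge}, however, the upper bound $\lambda_{1}(\zeta)\leq k\rho+k-1$ is not pre-packaged: establishing that every sufficiently good rational approximation to $\zeta$ is (a multiple of) a partial sum is exactly the content of the paper's Lemma~\ref{danngenug}, proved via Proposition~\ref{cor} rather than just Lemma~\ref{help} and \cite[Satz~11]{perron}. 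So the step you label ``routine'' is in fact the main technical ingredient here. The paper instead bypasses the exact computation of $\lambda_{1}(\zeta)$ altogether: it obtains the lower bound $\lambda_{k}(\zeta)\geq\rho$ directly by evaluating $\Vert\zeta^{j}x\Vert$ at $x=b^{ka_{n}}$ via the binomial expansion $\zeta^{j}=S_{n}^{j}+O(\epsilon_{n})$, and obtains the upper bound $\lambda_{k}(\zeta)\leq\max\{1,\rho\}$ by assuming the contrary, applying Corollary~\ref{korollar} to force $x_{0}^{k}\mid x$ with $\vert\zeta x_{0}-y_{0}\vert<x_{0}^{-2k+1}$, and then invoking Lemma~\ref{danngenug} to pin down $x_{0}=b^{a_{n}}$. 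Your route buys conceptual economy---once $\lambda_{1}$ is known, everything downstream is a citation---while the paper's route works directly at level $k$ and never needs the exact value of $\lambda_{1}$ (only the easy lower bound $\lambda_{1}(\zeta)\geq k$ for the $\widehat{\lambda}_{k}$ assertion); but both ultimately rest on the same classification of good one-dimensional approximations to $\zeta$.
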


Note the similarity to the constructions of Theorem~\ref{bugerg}, where
the analogue result was established for $k=1$ and the sequence  
$a_{n}=\lceil \alpha(k(1+\rho))^{n}\rceil$ for $\alpha>0$. 
Roughly speaking, the additional factor $k$ in the quotient $a_{n+1}/a_{n}$ allows for generalizing 
the one-dimensional result. However, the methods of the proofs of Theorem~\ref{bugerg}
and Theorem~\ref{meinsatz} are much different. The approach in this paper is rather
connected to the one in~\cite{sa}, where a slightly weaker result than Theorem~\ref{bugerg} was established.

We encourage the reader to compare the following corollary to 
Theorem~\ref{meinsatz} with~\cite[Theorem~1]{buge}, which we will not state,
where a more general result in the special case $k=1$ was established.

\begin{corollary}  \label{verallg}
Let $k\geq 2$, $b\geq 2$ be integers and $\mathscr{A}\subset \{0,1,\ldots,b-1\}$
of cardinality $\vert \mathscr{A}\vert\geq 2$. 
The spectrum of the approximation constant $\lambda_{k}(\zeta)$, restricted to
$\zeta\in{(0,1)}$ whose expansion in base $b$ have all digits in $\mathscr{A}$, contains $[1,\infty]$.
In particular, for any $\epsilon>0$ there exists a set $\mathscr{B}$ of Hausdorff dimension less than $\epsilon$
such that the spectrum of $\lambda_{k}$ within $\mathscr{B}$ contains $[1,\infty]$.
\end{corollary}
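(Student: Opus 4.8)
The plan is to deduce Corollary~\ref{verallg} directly from Theorem~\ref{meinsatz}, exploiting the fact that the latter produces witnesses $\zeta$ with the required approximation constant whose base-$b$ expansion uses only the digits $0$ and $1$ (indeed only two distinct digits). First I would fix $k\ge 2$, $b\ge 2$ and $\mathscr{A}\subset\{0,1,\ldots,b-1\}$ with $|\mathscr{A}|\ge 2$, and choose two distinct digits $c_{0}<c_{1}$ in $\mathscr{A}$. Given a target value $\rho\in[1,\infty]$ we want to produce $\zeta$ with all base-$b$ digits in $\{c_{0},c_{1}\}\subset\mathscr{A}$ and $\lambda_{k}(\zeta)=\rho$. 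The natural candidate adapts \eqref{eq:zeter}: take a strictly increasing sequence $(a_{n})_{n\ge1}$ of positive integers with $a_{n+1}/a_{n}\to k(\rho+1)$ (for instance $a_{n}=\lceil (k(\rho+1))^{n}\rceil$ when $\rho<\infty$, and a hyper-exponentially growing sequence when $\rho=\infty$), and set
\[
\zeta=\frac{c_{0}}{b-1}\sum_{m\ge 1}b^{-m}\;+\;(c_{1}-c_{0})\sum_{n\ge 1}b^{-a_{n}},
\]
equivalently the number whose $m$-th base-$b$ digit is $c_{1}$ if $m\in\{a_{n}:n\ge1\}$ and $c_{0}$ otherwise; this manifestly has all digits in $\mathscr{A}$.

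Next I would reduce the computation of $\lambda_{k}(\zeta)$ for this $\zeta$ to Theorem~\ref{meinsatz}. Writing $\zeta=\alpha+(c_{1}-c_{0})\zeta_{0}$ with $\alpha=c_{0}/(b-1)\in\mathbb{Q}$ and $\zeta_{0}=\sum_{n\ge1}b^{-a_{n}}$ the number from \eqref{eq:zeter}, one observes that an affine change $\zeta\mapsto (\zeta-\alpha)/(c_{1}-c_{0})$ with rational coefficients does not change any of the constants $\lambda_{k}$: this is a standard invariance of $\lambda_{k}$ under $\zeta\mapsto (r\zeta+s)/t$ for rationals $r,s,t$, $r\neq0$, which follows by clearing denominators in \eqref{eq:lambda} and absorbing bounded polynomial factors (using that $\zeta^{j}$ is a $\mathbb{Q}$-linear combination of $1,(r\zeta+s)/t,\ldots,((r\zeta+s)/t)^{j}$ with bounded coefficients). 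Hence $\lambda_{k}(\zeta)=\lambda_{k}(\zeta_{0})$, and by Theorem~\ref{meinsatz} applied to $\zeta_{0}$ with this choice of $(a_{n})$ and $b$, we get $\lambda_{k}(\zeta_{0})=\rho$ for every $\rho\in[1,\infty]$. Letting $\rho$ range over $[1,\infty]$ shows the spectrum of $\lambda_{k}$ restricted to digit-set $\mathscr{A}$ contains $[1,\infty]$.

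For the last sentence of the corollary I would simply take $b$ large and $\mathscr{A}=\{0,1\}$: the set $\mathscr{B}$ of $\zeta\in(0,1)$ whose base-$b$ expansion uses only digits $0$ and $1$ is (up to a countable set of ambiguous endpoints) a self-similar Cantor-type set with $2$ pieces contracted by $b^{-1}$, so $\dim\mathscr{B}=\log 2/\log b$, which is $<\epsilon$ once $b>2^{1/\epsilon}$; the countable set of endpoints does not affect the Hausdorff dimension. By the first part, the spectrum of $\lambda_{k}$ within $\mathscr{B}$ contains $[1,\infty]$. The main obstacle, and essentially the only nontrivial point, is justifying the affine-invariance step cleanly — that $\lambda_{k}(\zeta)$ is unchanged under a rational affine substitution — since the construction in Theorem~\ref{meinsatz} is tailored to the specific digits $0$ and $1$; everything else is bookkeeping. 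One could alternatively sidestep the invariance lemma by re-examining the proof of Theorem~\ref{meinsatz} and checking that it goes through verbatim with the digit $1$ replaced by $c_{1}-c_{0}$ and a rational additive shift, but invoking invariance of $\lambda_{k}$ is the cleaner route.
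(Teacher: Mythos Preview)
Your argument is correct. The paper states the corollary without proof as an immediate consequence of Theorem~\ref{meinsatz}; your reduction to that theorem via the rational-affine invariance of $\lambda_{k}$ (to pass from the digits $\{0,1\}$ produced by \eqref{eq:zeter} to an arbitrary pair $\{c_{0},c_{1}\}\subset\mathscr{A}$), together with the standard self-similar dimension formula $\dim\mathscr{B}=\log 2/\log b$, fills in precisely the details the paper leaves to the reader.
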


For the proof of Theorem~\ref{meinsatz} we will need an estimate for the 
concrete numbers $\zeta$ in \eqref{eq:zeter}. This will be established in Lemma~\ref{danngenug}.
For its proof we apply a proposition connected to Lemma~\ref{help}.  

\begin{proposition} \label{cor}
Let $\zeta\in{\mathbb{R}}$. Then for no parameter $Q>0$ the system
\begin{equation} \label{eq:lindep}
\vert m\vert \leq Q, \qquad  \vert \zeta m- n\vert< \frac{1}{2Q}
\end{equation}
has two linearly independent solutions $(m,n)\in{\mathbb{Z}^{2}}$.
\end{proposition}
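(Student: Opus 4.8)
The plan is to argue by contradiction, mimicking the geometric estimate used inside the proof of Lemma~\ref{help}. Suppose for some $Q>0$ there are two linearly independent solutions $(m_1,n_1),(m_2,n_2)\in{\mathbb{Z}^2}$ of \eqref{eq:lindep}. First I would dispose of degenerate cases: if some $m_i=0$, then linear independence forces $n_i\neq 0$, whence $\vert\zeta m_i-n_i\vert=\vert n_i\vert\geq 1$, contradicting $\vert\zeta m_i - n_i\vert<1/(2Q)$ as soon as $Q$ is such that $1/(2Q)\leq 1$; and if $Q<1/2$ the second inequality in \eqref{eq:lindep} forces $\vert\zeta m_i-n_i\vert<1$, so actually $n_i$ must be the nearest integer to $\zeta m_i$, and one checks directly that no two independent solutions can occur for such small $Q$ since then $\vert m_i\vert\leq Q<1$ forces $m_i=0$. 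Hence we may assume both $m_i$ are nonzero integers with $1\leq\vert m_i\vert\leq Q$.

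Next, from $\vert\zeta m_i-n_i\vert<1/(2Q)$ and $\vert m_i\vert\leq Q$ I divide by $\vert m_i\vert$ to get
\[
\left\vert \zeta - \frac{n_i}{m_i}\right\vert < \frac{1}{2Q\vert m_i\vert}, \qquad i=1,2.
\]
By the triangle inequality,
\[
\left\vert \frac{n_1}{m_1}-\frac{n_2}{m_2}\right\vert < \frac{1}{2Q\vert m_1\vert}+\frac{1}{2Q\vert m_2\vert}
\leq \frac{1}{2\vert m_2\vert\vert m_1\vert}+\frac{1}{2\vert m_1\vert\vert m_2\vert}
= \frac{1}{\vert m_1 m_2\vert},
\]
where the middle inequality uses $Q\geq\vert m_1\vert$ in the first term and $Q\geq\vert m_2\vert$ in the second. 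On the other hand, linear independence of $(m_1,n_1),(m_2,n_2)$ means $m_1 n_2 - m_2 n_1\neq 0$, so this integer has absolute value at least $1$, giving
\[
\left\vert \frac{n_1}{m_1}-\frac{n_2}{m_2}\right\vert = \frac{\vert m_1 n_2 - m_2 n_1\vert}{\vert m_1 m_2\vert}\geq \frac{1}{\vert m_1 m_2\vert}.
\]
These two displays contradict each other, which completes the argument.

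The only genuine subtlety — and the step I expect to require the most care — is making sure the strict inequality in \eqref{eq:lindep} is genuinely strict in the right place so that the chain above produces a strict-versus-non-strict contradiction rather than an equality $1/\vert m_1m_2\vert = 1/\vert m_1m_2\vert$; this is why the hypothesis is stated with ``$<$'' rather than ``$\leq$'', and one must be slightly careful that when $\vert m_i\vert=Q$ exactly the bound $1/(2Q\vert m_i\vert)=1/(2\vert m_i\vert^2)$ is used with the correct (strict) inequality inherited from \eqref{eq:lindep}. Everything else is the classical two-rationals-are-far-apart estimate already exploited in \eqref{eq:arith}--\eqref{eq:geometric}.
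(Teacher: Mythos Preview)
Your argument is correct and follows essentially the same route as the paper's proof: both reduce to $m_i\neq 0$ and then derive a contradiction from the classical gap estimate $\vert n_1/m_1-n_2/m_2\vert\geq 1/\vert m_1m_2\vert$ versus the upper bound coming from the triangle inequality, exactly as in \eqref{eq:arith}--\eqref{eq:geometric}. The only difference is organizational: the paper fixes the solution with largest $m$ and shows the other one must violate the second inequality, whereas you treat the two solutions symmetrically; also, the paper disposes of the degenerate case in one line by noting that $Q<1$ forces $m=0$ and hence all solutions lie on a line, which is cleaner than your case split (and note your clause ``if $Q<1/2$ the second inequality forces $\vert\zeta m_i-n_i\vert<1$'' is not literally true and is in any case unnecessary, since $\vert m_i\vert\leq Q<1$ already forces $m_i=0$).
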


\begin{proof}
We may assume $Q\geq 1$, otherwise there do not exist two linearly independent vectors anyway.
Hence \eqref{eq:lindep} implies $m\neq 0$, so $m>0$ is no restriction (for else if $m<0$ consider $(-m,-n)$).
For fixed $Q$, say $(m_{1},n_{1})\in{\mathbb{Z}_{>0}\times \mathbb{Z}}$ 
is a solution to \eqref{eq:lindep} with largest $m_{1}$ among all such solutions. 
We have to show that any vector $(m_{2},n_{2})\in{\mathbb{Z}_{>0}\times \mathbb{Z}}$ 
linearly independent to $(m_{1},n_{1})$ with 
$m_{2}\leq m_{1}\leq Q$ satisfies $\vert \zeta m_{2}- n_{2}\vert\geq (1/2)Q^{-1}$. 
We infer 
\[
\left\vert \zeta-\frac{n_{1}}{m_{1}}\right\vert <\frac{1}{2m_{1}Q}\leq \frac{1}{2m_{1}m_{2}}
\]
from \eqref{eq:lindep}. Thus, the linear independence condition implies
\[
\left\vert \zeta-\frac{n_{2}}{m_{2}}\right\vert \geq 
\left\vert \frac{n_{1}}{m_{1}}-\frac{n_{2}}{m_{2}}\right\vert-\left\vert \zeta-\frac{n_{1}}{m_{1}}\right\vert
\geq \frac{1}{m_{1}m_{2}}-\frac{1}{2m_{1}m_{2}}=\frac{1}{2m_{1}m_{2}}\geq \frac{1}{2m_{2}Q}.
\]
Multiplying with $m_{2}$ yields the assertion.
\end{proof}

An alternative proof of Proposition~\ref{cor} is obtained by regarding it as a special case of  
Minkowski's second lattice point theorem~\cite{minkowski} on convex bodies, in the plane.

\begin{lemma}  \label{danngenug}
Let $k\geq 2, b\geq 2$ be integers, $\rho>0$ and $\zeta$ be as in {\upshape(\ref{eq:zeter})}
for some sequence $(a_{n})_{n\geq 1}$ as in {\upshape(\ref{eq:folge})}. 
Then for $(x,y)\in{\mathbb{Z}^{2}}$ with sufficiently large $x$, the estimate
\begin{equation} \label{eq:harz}
\vert\zeta x-y\vert\leq x^{-\frac{k}{k-1}}
\end{equation}
implies $(x,y)$ an integral multiple of some
\begin{equation} \label{eq:vektoren}
\underline{x}_{n}:=(x_{n},y_{n}):=(b^{a_{n}},\sum_{i\leq n} b^{a_{n}-a_{i}}).
\end{equation}
\end{lemma}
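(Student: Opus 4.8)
\textbf{Proof proposal for Lemma~\ref{danngenug}.}

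The plan is to exploit the very sparse structure of the digit expansion $\zeta=\sum_{n\geq 1}b^{-a_n}$, for which the vectors $\underline{x}_n=(b^{a_n},\sum_{i\leq n}b^{a_n-a_i})$ are the natural ``convergent-type'' approximations: one computes directly that $\zeta x_n-y_n=\sum_{i>n}b^{a_n-a_i}$, which is of size roughly $b^{a_n-a_{n+1}}=x_n^{-(a_{n+1}/a_n-1)}$, and by \eqref{eq:folge} the exponent tends to $k(\rho+1)-1\geq k-1$. So the $\underline{x}_n$ (and their integer multiples) do satisfy a good approximation estimate, and the content of the lemma is the converse: nothing else does. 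First I would fix a solution $(x,y)$ of \eqref{eq:harz} with $x$ large and let $n$ be the index with $x_n\leq x<x_{n+1}$ (legitimate since $x_1=b^{a_1}$ and the $x_n$ grow without bound). The goal is to force $(x,y)$ to be a multiple of $\underline{x}_n$.

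The key step is an application of Proposition~\ref{cor} with an appropriate choice of $Q$. Both $(x,y)$ and $\underline{x}_n$ satisfy $|\zeta m-n'|<\tfrac1{2Q}$ and $|m|\leq Q$ for a suitable $Q$ chosen slightly larger than $x$: indeed $|x|\leq x<x_{n+1}$ and $|x_n|\leq x_n\leq x$, while $|\zeta x-y|\leq x^{-k/(k-1)}$ and $|\zeta x_n-y_n|\approx x_n^{-(a_{n+1}/a_n-1)}$ are both far smaller than $\tfrac1{2Q}$ once $Q$ is taken of order, say, $x^{1+\delta}$ for small $\delta>0$ — here one must check that $x^{1+\delta}<x_{n+1}$, which holds because $x$ is much smaller than $x_{n+1}$ unless $x$ is already comparable to $x_{n+1}$, a boundary case I would handle by instead comparing with $\underline{x}_{n+1}$. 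By Proposition~\ref{cor}, $(x,y)$ and $(x_n,y_n)$ cannot be linearly independent, hence $y/x=y_n/x_n$. Since $\gcd(x_n,y_n)=1$ (because $y_n\equiv b^{a_n-a_1}\cdot(\text{unit})$... more simply: $x_n=b^{a_n}$ is a power of $b$ and $y_n$ is not divisible by $b$ since its lowest-order term $b^{a_n-a_n}=1$ contributes a unit digit, so $\gcd(x_n,y_n)$ divides a power of $b$ yet is coprime to $b$, hence equals $1$), writing $y/x$ in lowest terms gives $(x,y)=M\,\underline{x}_n$ for a positive integer $M$, as claimed.

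The delicate point — and the step I expect to be the main obstacle — is the choice of $Q$ together with the treatment of the transitional range where $x$ lies close to $x_{n+1}$, so that no single $Q\in(x,x_{n+1})$ simultaneously makes both $|\zeta x-y|$ and $|\zeta x_n-y_n|$ smaller than $\tfrac1{2Q}$. In that regime I would argue that $x$ is itself essentially forced to equal (a multiple of) $x_{n+1}$: if $x\geq x_{n+1}^{1-\epsilon}$ then $|\zeta x-y|\leq x^{-k/(k-1)}$ is already smaller than $|\zeta x_{n+1}-y_{n+1}|$-scale thresholds, and one reruns Proposition~\ref{cor} with $Q$ slightly above $x_{n+1}$ to conclude $y/x=y_{n+1}/x_{n+1}$ instead. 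Quantitatively this uses that the exponent $a_{n+1}/a_n-1$ exceeds $k/(k-1)$ for all large $n$, which follows from \eqref{eq:folge} since $k(\rho+1)-1\geq k-1>k/(k-1)$ for $k\geq 2$ — this strict inequality is exactly the slack that makes the dichotomy (``$(x,y)$ is a multiple of $\underline{x}_n$ or of $\underline{x}_{n+1}$, and in fact always the right one'') work, and pinning down the thresholds so the two sub-cases overlap is the routine-but-careful bookkeeping that completes the proof.
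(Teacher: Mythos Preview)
Your strategy is the paper's: locate $x$ in $[x_n,x_{n+1})$, then apply Proposition~\ref{cor} twice --- once with $Q$ of order $x$ to force $(x,y)\parallel\underline{x}_n$ when $x$ lies in a ``low'' sub-range, and once with $Q=x_{n+1}$ to force $(x,y)\parallel\underline{x}_{n+1}$ when $x$ is ``high''. The paper's explicit split point is $b^{a_{n+1}-(1+\delta)a_n}$ with $Q=x$ in the first case (slightly tidier than your $Q=x^{1+\delta}$, and note the correct constraint there is $Q\lesssim b^{a_{n+1}-a_n}$, not merely $Q<x_{n+1}$), but structurally the arguments coincide; your coprimality observation for $(x_n,y_n)$ also matches the paper's.

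One quantitative slip to repair: your overlap justification ``$k(\rho+1)-1\geq k-1>k/(k-1)$'' is false at $k=2$ (it reads $1>2$), and indeed $a_{n+1}/a_n-1>k/(k-1)$ can fail for $k=2$, $\rho\leq 1/2$. Fortunately that is not the inequality the dichotomy actually requires. The low-range ceiling (where $\vert\zeta x_n-y_n\vert<1/(2Q)$ with $Q\geq x$ is still possible) is $\approx b^{a_{n+1}-a_n}$, while the high-range floor (where $\vert\zeta x-y\vert\leq x^{-k/(k-1)}<1/(2x_{n+1})$ begins to hold) is $\approx x_{n+1}^{(k-1)/k}=b^{a_{n+1}(k-1)/k}$; the overlap condition reduces to $a_{n+1}/a_n>k$, i.e.\ simply $\rho>0$, which is exactly the hypothesis and exactly what the paper verifies. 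Replace your claimed inequality with this one and the ``routine bookkeeping'' you anticipate does go through.
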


\begin{proof}
First note that for any fixed $n$, the entries $x_{n},y_{n}$ of
the vectors $\underline{x}_{n}=(x_{n},y_{n})$ are coprime, 
since $x_{n}$ consists of prime factors dividing $b$ and $y_{n}\equiv 1\bmod b$.
Hence any vector $(x,y)\in{\mathbb{Z}^{2}}$ which is no integral multiple of 
$\underline{x}_{n}$ is actually linearly independent from it.

Assume the lemma is false. Then by the above observation,
there exist arbitrarily large $(x,y)$ for which \eqref{eq:harz} holds
and which are linearly independent to all $\underline{x}_{n}$. 
Let $\delta>0$ not be too large, in particular $\delta=1$ will be a proper choice if we
assume $x$ (or $n$) is sufficiently large. 
Say $n$ is the index with $b^{a_{n}}\leq x<b^{a_{n+1}}$. 

First suppose $x\leq b^{a_{n+1}-(1+\delta)a_{n}}$. Put $Q=x$.
If $n$ or equivalently $x$ is sufficiently large, then by assumption \eqref{eq:harz}
we have
\[
-\frac{\log \vert \zeta x-y\vert}{\log Q}
= -\frac{\log \vert \zeta x-y\vert}{\log x}
\geq \frac{k}{k-1}> 1.
\]
On the other hand the estimate
\begin{equation} \label{eq:trick}
\Vert b^{a_{n}}\zeta\Vert = \sum_{i\geq n+1} b^{a_{n}-a_{i}}
\leq 2\cdot b^{a_{n}-a_{n+1}}
\end{equation}
implies
\begin{equation} \label{eq:fried}
-\frac{\log \vert \zeta x_{n}-y_{n}\vert}{\log Q}= -\frac{\log \Vert b^{a_{n}}\zeta\Vert}{\log x}
\geq \frac{a_{n+1}-a_{n}-\frac{\log 2}{\log b}}{a_{n+1}-(1+\delta)a_{n}}>1.
\end{equation}
Hence, for some fixed $\epsilon>0$ and arbitrarily large $n$, the system
\begin{equation} \label{eq:wievorher}
\vert M\vert \leq Q, \qquad \vert \zeta M-N\vert \leq Q^{-1-\epsilon}
\end{equation}
has two linearly independent integral solutions $(M,N)=(x,y),(M,N)=(x_{n},y_{n})$. 
Since the above holds for all $n\geq 1$ and we may assume $Q=x>2^{1/\epsilon}$,
we infer a contradiction to Proposition~\ref{cor}.
 
In the remaining case $b^{a_{n+1}-(1+\delta)a_{n}}\leq x<b^{a_{n+1}}$, put $Q=b^{a_{n+1}}$.
For sufficiently large $n$, clearly \eqref{eq:trick} for $n$ replaced by $n+1$ shows that
$(x_{n+1},y_{n+1})$ satisfies \eqref{eq:wievorher}
for some $\epsilon>0$ (actually any $\epsilon\geq k(\rho+1)-1\geq k-1\geq 1$).
On the other hand, \eqref{eq:harz} yields
\begin{equation} \label{eq:frie}
-\frac{\log \vert \zeta x-y\vert}{\log Q}= -\frac{\log \vert \zeta x-y\vert}{\log x}\cdot\frac{\log x}{\log Q}
\geq \frac{k}{k-1}\cdot \frac{a_{n+1}-(1+\delta)a_{n}}{a_{n+1}}.
\end{equation}
For $\rho>0$, we have
\[
\lim_{n\to\infty, \delta\to 0}\frac{a_{n+1}-(1+\delta)a_{n}}{a_{n+1}}= \frac{k(\rho+1)-1}{k(\rho+1)}> \frac{k-1}{k}.
\]
Hence, the right hand side in \eqref{eq:frie} is strictly greater than $1$ and 
consequently the left is too. Thus
for some $\epsilon>0$ the system \eqref{eq:wievorher} has linearly independent integral solutions 
$(x,y),(x_{n+1},y_{n+1})$ again, contradiction to Proposition~\ref{cor} for large $n$ (resp. $Q$).
\end{proof}

\begin{remark}  \label{remarkch}
The continued fraction expansion of numbers $\zeta$ as in Theorem~\ref{meinsatz} can be explicitly
established using some variant of the Folding~Lemma,
see~\cite{buge} or~\cite{fra}. This should allow for 
proving the assertion of Lemma~\ref{danngenug} even for slightly larger exponents than $-k/(k-1)$ in \eqref{eq:harz}. 
It is reasonable that even the optimal value in \eqref{eq:harz}
in the dependence of $\rho$ can be determined
for which the claim of Lemma~\ref{danngenug} holds. 
It is possible to show, though, that Lemma~\ref{danngenug} does not apply with 
exponent $-1-\epsilon$ for some $\epsilon>0$ (otherwise the proof of Theorem~\ref{meinsatz}
could be simplified).
However, improvements of this kind are not necessary for our purposes. 
In fact, we only need the much weaker bound $-2k+1$ instead 
of $-k/(k-1)$ for the proof of Theorem~\ref{meinsatz}.
\end{remark}

\begin{remark}
In fact, Lemma~\ref{danngenug} is still true by essentially the same proof if 
we relax the assumption \eqref{eq:folge} to the weaker condition $\liminf_{n\to\infty} a_{n+1}/a_{n}\geq k(\rho+1)$. 
\end{remark}

\subsection{Proof of Theorem~\ref{meinsatz}}
We now prove Theorem~\ref{meinsatz} by using Lemma~\ref{help}, Lemma~\ref{danngenug} 
and Corollary~\ref{korollar}.
Lower bounds for $\lambda_{k}(\zeta)$ in Theorem~\ref{meinsatz} will be rather straightforward to derive
by looking at integers of the form $x=b^{ka_{n}}$ for large $n$,
whereas the proof of more interesting upper bounds is slightly technical. 
We sketch the outline of the proof of the upper bounds. We distinguish
between integers $x$ with the property that $\Vert \zeta x_{0}\Vert<x_{0}^{-2k+1}$ 
for $x_{0}$ the largest best approximation $\leq x$, see Definition~\ref{defi}, and those for
which this inequality is wrong. Lemma~\ref{help} and Lemma~\ref{danngenug} 
allow for an easy classification
of the values $x$ belonging to the first class to which
Lemma~\ref{lemma2} and Lemma~\ref{help} can be effectively applied to obtain upper bounds. For the 
remaining class of integers $x$, the negated formulation of Corollary~\ref{korollar} immediately
yields the upper bound $1$.   
 
\begin{proof}[Proof of Theorem~\ref{meinsatz}]
We consider $k\geq 2$ and $\rho>0$ fixed, and a corresponding sequence $(a_{n})_{n\geq 1}$
and $\zeta$ as in \eqref{eq:zeter} is constructed via the sequence.
Note that
\begin{equation} \label{eq:est}
\Vert b^{a_{n}}\zeta\Vert = \sum_{i\geq n+1} b^{a_{n}-a_{i}}\leq 2\cdot b^{a_{n}-a_{n+1}}.
\end{equation}
We first prove the assertion on $\widehat{\lambda}_{k}$.
Assuming $\rho\geq 1/k$, for any $\delta>0$ and sufficiently large $n\geq\widehat{n}(\delta)$ we have
\[
a_{n+1}\geq (k+k\rho-\delta) a_{n}\geq (k+1-\delta)a_{n}.
\]
If we choose integers $x$ of the form $b^{a_{n}}$, the estimate \eqref{eq:est} and $\delta\to 0$ imply 
\[
\lambda_{1}(\zeta)\geq \limsup_{n\geq 1} -\frac{\log(2\cdot b^{a_{n}-a_{n+1}})}{\log b^{a_{n}}}
\geq \limsup_{n\geq 1} \frac{(k+1)a_{n}-a_{n}}{a_{n}}= k.
\]
For any such real number $\zeta$, the assertion follows directly from Theorem~\ref{theo}.     
						
To prove \eqref{eq:arm} and \eqref{eq:bein}, we show
\begin{equation}  \label{eq:kopf}
\max\left\{\frac{1}{k},\rho\right\}\leq \lambda_{k}(\zeta)\leq \max\left\{1,\rho\right\}.
\end{equation}																				
We start with the left inequality. 
We only have to show $\lambda_{k}(\zeta)\geq \rho$, the other inequality $\lambda_{k}(\zeta)\geq 1/k$ is
trivial by \eqref{eq:dirichlet}. It suffices to consider integers $x$ of the form 
$x=b^{ka_{n}}$. Write $\zeta=S_{n}+\epsilon_{n}$ with
\[
S_{n}=\sum_{i=1}^{n} b^{-a_{i}}, \qquad \epsilon_{n}=\sum_{i=n+1}^{\infty} b^{-a_{i}}.
\]
Since $S_{n}<1, \epsilon_{n}<1$ and the binomial coefficients are bounded above by $k!$ , we have that
\[
\zeta^{j}=\sum_{i=0}^{j} \binom{j}{i}S_{n}^{i}\epsilon_{n}^{j-i}=S_{n}^{j}+O(\epsilon_{n}), \qquad 1\leq j\leq k, 
\]
as $n\to \infty$, with the implied constant depending on $k$ only. The crucial point 
now is that $xS_{n}^{j}$ is an integer
for $1\leq j\leq k$ by construction. Thus for some constant $C_{0}>0$ 
independent of $n$ and $C_{1}=2C_{0}$, we have
\begin{equation} \label{eq:tuti}
\left\Vert x\zeta^{j}\right\Vert\leq C_{0} \sum_{i=n+1}^{\infty} b^{-a_{i}}x
=C_{0}\cdot b^{ka_{n}}\sum_{i=n+1}^{\infty} b^{-a_{i}}\leq C_{1}\cdot b^{ka_{n}-a_{n+1}}, \qquad 1\leq j\leq k.
\end{equation}
The condition \eqref{eq:folge} implies for any $\nu>0$ and sufficiently large $n\geq \hat{n}(\nu)$
\[
ka_{n}-a_{n+1}\leq (-k\rho+\nu) a_{n}.
\]
For sufficiently large $n$ (or equivalently $x$), combination with \eqref{eq:tuti} yields
\[
\max_{1\leq j\leq k}\left\Vert x\zeta^{j}\right\Vert \leq C_{1}\cdot b^{(-k\rho+\nu) a_{n}}=
C_{1}\cdot x^{-\rho+\frac{\nu}{k}}\leq x^{-\rho+\frac{2\nu}{k}}.
\]
Since $\nu$ can be taken arbitrarily small we indeed obtain $\lambda_{k}(\zeta)\geq \rho$ for any fixed $\rho>0$.

We are left to prove the right hand side of \eqref{eq:kopf}, which we do
indirectly. Suppose there exists $\rho>0$ and $\zeta$ as in Theorem~\ref{meinsatz} 
such that $\lambda_{k}(\zeta)>\max\{1,\rho\}$. Then for $\epsilon=(\lambda_{k}(\zeta)+1)/2-1>0$, 
the inequality
\[
\max_{1\leq j\leq k}\Vert \zeta^{j} x\Vert\leq x^{-1-\epsilon}
\]
has arbitrarily large solutions $x$. Consequently Corollary~\ref{korollar} applies.
It yields that $x=M_{0}x_{0}$ for some best approximation $x_{0}$ such that 
$\vert \zeta x_{0}-y_{0}\vert< x_{0}^{-2k+1}$ for some $y_{0}$ with $(x_{0},y_{0})=1$, and $M_{0}\geq x_{0}^{k-1}$.
Note that $k/(k-1)< 2k-1$ for $k\geq 2$. Recall from the proof of Lemma~\ref{danngenug} that
for any fixed $n$, the entries of the vectors $\underline{x}_{n}=(x_{n},y_{n})$ defined in \eqref{eq:vektoren} 
are coprime too. Thus Lemma~\ref{danngenug} shows that for large $x_{0}$, the inequality 
$\vert \zeta x_{0}-y_{0}\vert< x_{0}^{-2k+1}$ can be satisfied only if 
$(x_{0},y_{0})=\underline{x}_{n}$ for some $n$.
Hence we can write $x_{0}=b^{a_{n}}$, and consequently $M_{0}\geq b^{(k-1)a_{n}}$ and $x\geq b^{ka_{n}}$. 
Observe that
\[
\Vert b^{a_{n}}\zeta\Vert = \sum_{i\geq n+1} b^{a_{n}-a_{i}}\geq b^{a_{n}-a_{n+1}}.
\]
Hence, \eqref{eq:multiplizieren} of Lemma~\ref{help} yields
\begin{equation} \label{eq:huhn}
\Vert \zeta x\Vert=M_{0}\vert \zeta x_{n}-y_{n}\vert\geq 
b^{(k-1)a_{n}}\cdot b^{a_{n}-a_{n+1}}= b^{ka_{n}-a_{n+1}}.
\end{equation}
By the assumption \eqref{eq:folge} on the sequence $(a_{n})_{n\geq 1}$, we have for any $\eta>0$
and sufficiently large $n\geq \widehat{n}(\eta)$ (or equivalently $x$ large enough)
\[
ka_{n}-a_{n+1}\geq (-k\rho-\eta)a_{n}.
\]
Together with \eqref{eq:huhn} we infer
\[
\max_{1\leq j\leq k}\Vert \zeta^{j} x\Vert\geq \Vert \zeta x\Vert \geq
b^{(-k\rho-\eta)a_{n}}\geq x^{-\rho-\frac{\eta}{k}}. 
\]
Thus, the approximation constant $\lambda_{k}(\zeta)$ restricted to 
pairs $(x,y)$ linearly dependent to some $\underline{x}_{n}$ is bounded above by $\rho+\eta/k$.
As we may choose $\eta$ arbitrarily small, this contradicts the assumption 
$\lambda_{k}(\zeta)>\max\{1,\rho\}$ as well.
\end{proof}

\begin{remark}
In fact we proved that for any $\epsilon>0$ and sufficiently large $n\geq \hat{n}(\epsilon)$,
any integer $x\in{[b^{a_{n}},b^{a_{n+1}})}$ not divisible by $b^{a_{n}}$ satisfies
$\max_{1\leq j\leq k} \Vert \zeta^{j}x\Vert \geq x^{-1-\epsilon}$.
Using the argument of case $T=1$ in Corollary~\ref{korollar} within the proof instead of the $T>1$ case,
this can be sharpened to $\max_{1\leq j\leq k} \Vert \zeta^{j}x\Vert\geq C_{0}x^{-1}$ 
with $C_{0}$ from Lemma~\ref{lemma2}.
\end{remark}

Eventually, we state two obvious conjectures concerning generalizations of Theorem~\ref{meinsatz}.
Both would imply a positive answer to Problem~\ref{problem1}.

\begin{conjecture}[Weak]
Let $k\geq 2,b\geq 2$ be integers and $\rho\geq 1/k$. 
Let $(a_{n})_{n\geq 1}$ be a strictly increasing sequence of positive 
integers with the property \eqref{eq:folge} and $\zeta$ as in \eqref{eq:zeter}.
Then $\lambda_{k}(\zeta)=\rho$.
\end{conjecture}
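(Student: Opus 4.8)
The plan is to reduce the ``Weak Conjecture'' to the already-established Theorem~\ref{meinsatz}, and then use Corollary~\ref{bestens} to upgrade the two-sided bounds \eqref{eq:arm} to an equality in the range $\rho\in[1/k,1)$ that Theorem~\ref{meinsatz} leaves open. Concretely, observe first that for $\rho\ge 1$ the equality $\lambda_k(\zeta)=\rho$ is \emph{exactly} the content of \eqref{eq:bein}, so nothing remains to be done there. Thus I would immediately restrict to the case $1/k\le\rho<1$, where Theorem~\ref{meinsatz} gives only $\max\{1/k,\rho\}\le\lambda_k(\zeta)\le 1$, i.e.\ since $\rho<1$, just $\rho\le\lambda_k(\zeta)\le 1$. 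The goal is then to rule out $\lambda_k(\zeta)>\rho$.

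Wait --- this cannot be done, because $\lambda_k(\zeta)$ can genuinely exceed $\rho$ in this range; the whole point of calling it a \emph{conjecture} is that the upper bound $\lambda_k(\zeta)\le\rho$ for $\rho<1$ is precisely what is \emph{not} accessible by the methods of the paper (the obstruction being the failure of Corollary~\ref{korollar} at exponent $T=1$, which is why Theorem~\ref{meinsatz} degenerates to the interval $[\rho,1]$ exactly when $\rho<1$). Therefore I would \emph{not} claim to prove the conjecture; instead the honest ``proof proposal'' is to record which half is already a theorem and to isolate the genuine obstacle. The plan is: (i) for $\rho\ge 1$, cite \eqref{eq:bein} of Theorem~\ref{meinsatz} verbatim --- done. (ii) For $1/k\le\rho<1$, cite \eqref{eq:arm}, giving the lower bound $\lambda_k(\zeta)\ge\rho$ unconditionally. (iii) The remaining task --- proving $\lambda_k(\zeta)\le\rho$ --- would require extending Corollary~\ref{korollar} (equivalently Lemma~\ref{danngenug}, cf.\ Remark~\ref{remarkch}) so that an integer solution $x$ of $\max_j\|\zeta^j x\|\le x^{-T}$ with $T$ only slightly larger than $\rho$ (rather than $T>1$) is still forced to be a multiple of $b^{ka_n}$; one would then rerun the final displayed chain of the proof of Theorem~\ref{meinsatz}, which shows that for such structured $x$ one has $\max_j\|\zeta^j x\|\ge x^{-\rho-\eta/k}$, to conclude $\lambda_k(\zeta)\le\rho$.

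The main obstacle is exactly step (iii): one needs a best-approximation analysis valid below exponent~$1$. As indicated in Remark~\ref{remarkch}, the vectors $\underline{x}_n=(b^{a_n},\sum_{i\le n}b^{a_n-a_i})$ are \emph{not} the only solutions of $|\zeta x-y|\le x^{-1-\epsilon}$ --- intermediate convergents of the continued fraction of $\zeta$ (obtained via a Folding-Lemma description of the partial quotients) also contribute, and these can produce extra integers $x$ on which $\max_j\|\zeta^j x\|$ is as small as roughly $x^{-\rho'}$ with $\rho<\rho'<1$. Controlling those intermediate convergents --- in particular showing that their powers $x_0^j/\text{(numerators)}$ still arrange themselves well enough that the divisibility obstruction of Lemma~\ref{lemma2} kicks in --- is precisely the gap. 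A natural strategy would be: write out the continued fraction expansion of $\zeta$ explicitly via the Folding Lemma (as in~\cite{buge},~\cite{fra}), classify \emph{all} best approximations $x_0\le x$ of $\zeta$ (not merely those with $\|\zeta x_0\|<x_0^{-2k+1}$), and for each such $x_0$ bound $\max_{1\le j\le k}\|\zeta^j x_0\|$ from below by analysing the base-$x_0$ digit expansion of $x_0^j$ times the relevant numerator. If every best approximation $x_0$ in the range $[b^{a_n},b^{a_{n+1}})$ could be shown to satisfy $\max_j\|\zeta^j x_0\|\ge x_0^{-\rho-o(1)}$, the conjecture would follow; but establishing this uniformly over intermediate convergents is the hard part and is left open.
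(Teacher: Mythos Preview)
Your analysis is correct and matches the paper's own treatment: this statement is explicitly posed as a \emph{conjecture} in the paper, not a theorem, and the paper offers no proof. You correctly isolate the case $\rho\ge 1$ as already covered by \eqref{eq:bein} of Theorem~\ref{meinsatz}, and correctly identify the open range $1/k\le\rho<1$ where only the sandwich \eqref{eq:arm} is available.

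Your diagnosis of the obstruction also agrees with the paper's. Immediately after stating the two conjectures, the paper writes: ``The crucial point why the methods in the proof of Theorem~\ref{meinsatz} do not allow for establishing better upper bounds for $\lambda_{k}(\zeta)$ \ldots\ in case of $\rho<1$, is that no extension of Lemma~\ref{lemma2} to this case seems available.'' This is the same obstacle you name via Corollary~\ref{korollar} (which is the consequence of Lemma~\ref{lemma2} actually used in the proof): the divisibility argument forcing $x_0^k\mid x$ only fires when $\max_j\Vert\zeta^j x\Vert$ is below $C_0 x^{-1}$, i.e.\ at exponent $T>1$. One small clarification: the primary bottleneck is Lemma~\ref{lemma2}/Corollary~\ref{korollar}, not Lemma~\ref{danngenug}. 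The latter already works down to exponent $-k/(k-1)$, which is well below $-1$; what is missing is a structural constraint on $x$ when the \emph{simultaneous} approximation exponent lies in $(\rho,1]$, and that is governed by Lemma~\ref{lemma2}. Your suggested strategy (explicit continued fraction via the Folding Lemma, then bounding $\max_j\Vert\zeta^j x_0\Vert$ for \emph{all} best approximations $x_0$, including intermediate convergents) is a reasonable line of attack, but as you note, it goes beyond what the paper establishes.
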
 

\begin{conjecture}[Strong]
Let $k\geq 2, b\geq 2$ be integers and $\rho>0$. 
Let $(a_{n})_{n\geq 1}$ be a strictly increasing sequence of positive 
integers with the property \eqref{eq:folge} and $\zeta$ as in \eqref{eq:zeter}.
Then $\lambda_{k}(\zeta)=\max\{\rho,1/k\}$.
\end{conjecture}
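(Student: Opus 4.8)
By Theorem~\ref{meinsatz} we already know $\lambda_k(\zeta)=\rho$ when $\rho\ge 1$, and the lower bound $\lambda_k(\zeta)\ge\max\{\rho,1/k\}$ holds unconditionally, so the whole content of the conjecture is the upper bound $\lambda_k(\zeta)\le\max\{\rho,1/k\}$ for $\rho\in(0,1)$, where moreover $\lambda_k(\zeta)\le 1$ is already available. The plan is to argue indirectly: assume $\lambda_k(\zeta)>\max\{\rho,1/k\}$, fix $T$ with $\max\{\rho,1/k\}<T\le 1$, and show that $\max_{1\le j\le k}\Vert\zeta^jx\Vert\le x^{-T}$ fails for all large $x$. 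Given such an $x$, let $n$ satisfy $b^{a_n}\le x<b^{a_{n+1}}$ and let $v=v_b(x)$ be the $b$-adic valuation of $x$, so $v<a_{n+1}$ (for composite $b$ the largest divisor of $x$ supported on the primes of $b$ plays the role of $b^v$ below, affecting only constants). Writing $\zeta=S_m+\epsilon_m$ with $S_m=\sum_{i\le m}b^{-a_i}=Q_m/b^{a_m}$, $\gcd(Q_m,b)=1$, $\epsilon_m=\sum_{i>m}b^{-a_i}\asymp b^{-a_{m+1}}$, the expansion used for \eqref{eq:tor} gives $\zeta^j=Q_m^j/b^{ja_m}+O(b^{-a_{m+1}})$, hence
\begin{equation}\label{eq:propaux}
\Bigl|\,\Vert\zeta^jx\Vert-\Vert xQ_m^j/b^{ja_m}\Vert\,\Bigr|\le C(k,\zeta)\,x\,b^{-a_{m+1}}\qquad(1\le j\le k),
\end{equation}
and, since $\gcd(Q_m,b)=1$, one has $\Vert xQ_m^j/b^{ja_m}\Vert\ge b^{-(ja_m-v)}$ whenever $v<ja_m$.

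\textbf{The easy ranges.} If $v\ge ka_n$, write $x=b^{ka_n}w$ with $1\le w<b^{a_{n+1}-ka_n}$. When $w\le\tfrac14 b^{a_{n+1}-ka_n}$, a direct computation (cf.\ \eqref{eq:est}) gives $\Vert\zeta x\Vert=w\sum_{i>n}b^{ka_n-a_i}\ge b^{ka_n-a_{n+1}}\ge x^{-\rho-o(1)}$ --- using $x\ge b^{ka_n}$ --- and since $T>\rho$ this exceeds $x^{-T}$ for $n$ large, so no such $x$ solves our system; when $w>\tfrac14 b^{a_{n+1}-ka_n}$ we have $x\ge\tfrac14 b^{a_{n+1}}$, which falls under the hard range below. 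If instead $v<ka_n$ and $x\le b^{a_{n+1}-a_n-c_0}$ for a suitable constant $c_0=c_0(b,k,\zeta)$, then with $j_0:=\lfloor v/a_n\rfloor+1\in\{1,\dots,k\}$ (so $1\le j_0a_n-v\le a_n$) estimate \eqref{eq:propaux} with $m=n$ gives $\max_{1\le j\le k}\Vert\zeta^jx\Vert\ge b^{-(j_0a_n-v)}-C(k,\zeta)\,x\,b^{-a_{n+1}}\ge\tfrac12 b^{-a_n}$. This settles all $x$ that are not too close to $b^{a_n}$; for the remaining ones one writes $\zeta x\equiv c_0\zeta+(x-c_0)\epsilon_n\pmod 1$ with $c_0=x\bmod b^{a_n}$ (the second term being negligible in this range), bounds $\Vert c_0\zeta\Vert$ from below via the law of best approximation --- using that the $b^{a_m}$ are convergent denominators of $\zeta$, since $a_{m+1}>2a_m$ for $m$ large --- and, choosing for each $x$ the index $j\in\{1,\dots,k\}$ adapted to which of the exponents $a_1<\cdots<a_n$ are ``visible'' among the base-$b$ digits of $x$, recovers $\max_{1\le j\le k}\Vert\zeta^jx\Vert\ge x^{-\max\{\rho,1/k\}-o(1)}$ by induction on $n$.

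\textbf{The hard range and the obstacle.} What remains is $v<ka_n$ and $b^{a_{n+1}-a_n-c_0}<x<b^{a_{n+1}}$. Here \eqref{eq:propaux} with $m=n$ is useless (the error $x\,b^{-a_{n+1}}$ may have order $1$), while with $m=n+1$ it only yields $\max_j\Vert\zeta^jx\Vert\ge b^{-(a_{n+1}-v)}$, which can be as small as $\asymp x^{-1}$ --- too weak. The decisive point is that the $k$ inequalities $\Vert\zeta^jx\Vert\le x^{-T}$ ($1\le j\le k$) cannot hold \emph{simultaneously} for such $x$. To exploit this I would truncate $\zeta^j$ after finitely many terms, e.g.\ $\zeta^j=N_j/b^{(j-1)a_n+a_{n+1}}+O(b^{-2a_{n+1}}+b^{-a_{n+2}})$ with $N_j=Q_n^j b^{a_{n+1}-a_n}+jQ_n^{j-1}$, whose error --- multiplied by $x<b^{a_{n+1}}$ --- is negligible; each inequality then becomes a congruence on $x$ modulo $b^{(j-1)a_n+a_{n+1}}$, and the aim is to show that these congruences, whose moduli increase in $j$ and whose numerators are governed modulo suitable powers of $b$ by powers of $Q_n$, confine $x$ to a residue class in which $x$ is close to an integer multiple of some $b^{a_m}$, thereby reducing to the easy ranges. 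Equivalently, one can replace this congruence bookkeeping by the explicit continued fraction expansion of $\zeta$ (and the induced expansions of the $\zeta^j$) furnished by a Folding Lemma, cf.\ Remark~\ref{remarkch}, and bound $\min_{v\le x}\Vert\zeta^jv\Vert$ directly. Making this simultaneous use of all $k$ approximation conditions quantitatively effective is, I expect, the only genuinely new difficulty; everything else is a refinement of arguments already in Sections~\ref{beginn}--\ref{cantorset}. Combining the three ranges gives $\max_{1\le j\le k}\Vert\zeta^jx\Vert\ge x^{-\max\{\rho,1/k\}-o(1)}$ for all large $x$, hence $\lambda_k(\zeta)\le\max\{\rho,1/k\}$, which with Theorem~\ref{meinsatz} gives the asserted equality.
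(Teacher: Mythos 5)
This statement is not proved in the paper at all: it is stated there as the ``Strong'' conjecture, and immediately afterwards the author explains why his own machinery stops short of it --- for $\rho<1$ the hypothesis $\max_{1\le j\le k}\Vert\zeta^jx\Vert\le x^{-T}$ with $T<1$ is too weak to invoke Lemma~\ref{lemma2} (which needs the critical quality $C_0x^{-1}$), so no structural information on $x$ (divisibility by $x_0^k$, reduction to the vectors $\underline{x}_n$) can be extracted, and only the bound $\lambda_k(\zeta)\le 1$ of Theorem~\ref{meinsatz} survives. Your opening reduction is correct (the only content beyond Theorem~\ref{meinsatz} is the upper bound $\lambda_k(\zeta)\le\max\{\rho,1/k\}$ for $\rho\in(0,1)$), and your first ``easy range'' computation for $x=b^{ka_n}w$ with $w$ small is sound and close in spirit to the paper's treatment of the lower bound. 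But what you call the hard range is exactly the open difficulty, and you leave it unresolved: the truncation $\zeta^j\approx N_j/b^{(j-1)a_n+a_{n+1}}$ and the hoped-for congruence bookkeeping that would ``confine $x$ to a residue class near a multiple of some $b^{a_m}$'' is announced, not carried out, and there is no argument that the $k$ conditions really interact in the required way; this is precisely the missing substitute for Lemma~\ref{lemma2} below exponent $1$.

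Moreover, the gap is larger than the single paragraph you flag. In your second ``easy'' range the conclusion $\max_{1\le j\le k}\Vert\zeta^jx\Vert\ge\frac{1}{2}b^{-a_n}$ contradicts $x^{-T}$ only when $x> (2b^{a_n})^{1/T}$; since the conjecture forces you to take $T$ arbitrarily close to $\max\{\rho,1/k\}$, for $\rho\le 1/k$ this leaves essentially the whole interval $b^{a_n}\le x\lesssim b^{ka_n/1}$ untreated, and it is delegated to the sentence about the law of best approximation, ``visible'' digits and ``induction on $n$'', which is not a proof. Also, the sub-case $v\ge ka_n$ with $x\ge\frac{1}{4}b^{a_{n+1}}$ is deferred to a hard range that you defined by $v<ka_n$, so it is covered nowhere. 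In short, your proposal is a reasonable research plan whose verified parts reproduce arguments already in Sections~\ref{beginn}--\ref{cantorset}, but the decisive step is missing; since the paper itself leaves the statement open, the proposal cannot be counted as a proof.
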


The crucial point why the methods in the proof of Theorem~\ref{meinsatz} do not allow
for establishing better upper bounds for $\lambda_{k}(\zeta)$ in the context of 
Theorem~\ref{meinsatz} in case of $\rho< 1$, is that no 
extension of Lemma~\ref{lemma2} to this case seems available. 

\section{Consequences of Theorems~\ref{spektrum}, \ref{theo} for the dual constants}  \label{vier}
 
\subsection{Definition of the dual problem}  \label{dualproblem}
We conclude with applications to the dual constants $w_{k}(\zeta), \widehat{w}_{k}(\zeta)$.
For $\underline{\zeta}=(\zeta_{1},\ldots,\zeta_{k})\in{\mathbb{R}^{k}}$
the quantities $w_{k}(\underline{\zeta})$ and $\widehat{w}_{k}(\underline{\zeta})$
are respectively defined as the supremum of real numbers $\nu$ such that 
\[
\max_{0\leq j\leq k}\vert x_{j}\vert \leq X,  \qquad 
 0<\vert x_{0}+\zeta_{1} x_{1}+\cdots+\zeta_{k}x_{k}\vert \leq X^{-\nu},
\]
has a solution $(x_{0},x_{1},\ldots,x_{k})\in{\mathbb{Z}^{k+1}}$ for arbitrarily large $X$,
and for all $X\geq X_{0}$, respectively. Further let
\[
w_{k}(\zeta):=w_{k}(\zeta,\zeta^{2},\ldots,\zeta^{k}), \qquad
\widehat{w}_{k}(\zeta):=\widehat{w}_{k}(\zeta,\zeta^{2},\ldots,\zeta^{k}).
\]
Dirichlet's~Theorem yields for $\zeta$ not algebraic of degree $\leq k$ the estimates
\begin{equation} \label{eq:genauaso}
k\leq \widehat{w}_{k}(\zeta)\leq w_{k}(\zeta)\leq \infty.
\end{equation}
Moreover, it is not hard to check that \eqref{eq:monoton},~\eqref{eq:monodon} extend to
\begin{eqnarray} 
&\cdots&\leq \lambda_{3}(\zeta)\leq \lambda_{2}(\zeta)\leq \lambda_{1}(\zeta)=
w_{1}(\zeta)\leq w_{2}(\zeta)\leq w_{3}(\zeta)\leq\cdots,   \label{eq:dualugk}     \\
&\cdots&\leq \widehat{\lambda}_{3}(\zeta)\leq \widehat{\lambda}_{2}(\zeta)\leq \widehat{\lambda}_{1}(\zeta)=
\widehat{w}_{1}(\zeta)\leq \widehat{w}_{2}(\zeta)\leq \widehat{w}_{3}(\zeta)\leq\cdots. \nonumber
\end{eqnarray}
Khintchine's transference principle~\cite{khintchine} allows for a connection between the constants $\lambda_{k}$ 
and the constants $w_{k}$. Indeed, the original version is shown to be equivalent to
\begin{equation} \label{eq:kinschin}
\frac{w_{k}(\zeta)}{(k-1)w_{k}(\zeta)+k}\leq \lambda_{k}(\zeta)\leq \frac{w_{k}(\zeta)-k+1}{k}
\end{equation}
for all $\zeta\in{\mathbb{R}}$ in~\cite{bugg}. The analogue for the 
uniform constants $\widehat{w}_{k}, \widehat{\lambda}_{k}$ holds as well~\cite{buglau}, 
in fact a refined version due to German~\cite{german} can be written as
\begin{equation} \label{eq:dualdazu}
\frac{\widehat{w}_{k}(\zeta)-1}{(k-1)\widehat{w}_{k}(\zeta)}\leq 
\widehat{\lambda}_{k}(\zeta)\leq \frac{\widehat{w}_{k}(\zeta)-k+1}{\widehat{w}_{k}(\zeta)}.
\end{equation}
We should mention that \eqref{eq:kinschin} and \eqref{eq:dualdazu}
are valid in the much more general context of real vectors 
$\underline{\zeta}\in{\mathbb{R}^{k}}$ linearly independent over $\mathbb{Q}$ 
together with $\{1\}$. Consequently, the relations
\begin{equation} \label{eq:kin2}
\lambda_{k}(\zeta)=\frac{1}{k} \Longleftrightarrow w_{k}(\zeta)=k, \qquad 
\widehat{\lambda}_{k}(\zeta)=\frac{1}{k}\Longleftrightarrow \widehat{w}_{k}(\zeta)=k
\end{equation}
hold for any positive integer $k$ and any real number $\zeta$. Furthermore
\begin{equation} \label{eq:dualw}
\widehat{w}_{k}(\zeta)\leq 2k-1
\end{equation}
for $\zeta$ not algebraic of degree $\leq k$ is a consequence of~\cite[Theorem~2b]{schmidt1969}.

\subsection{Results for the dual problem} \label{rothw}

We say in advance that in the proofs in this section, we will assume $k\geq 2$,
since for $k=1$ the assertions follow from 
\eqref{eq:dirichlet},\eqref{eq:uniformity} and \eqref{eq:genauaso} if they are not trivial at all.
It will be convenient to use Roth's~Theorem~\cite{roth} at some places to exclude
the case that the numbers $\zeta$ involved are algebraic, in order to apply \eqref{eq:dualw}. 
It asserts that for irrational algebraic $\zeta$ we have $\lambda_{1}(\zeta)=1$.
Note also that Corollary~\ref{korollar} yields the equivalence of 
the inequalities $\lambda_{k}(\zeta)>1$ and $w_{1}(\zeta)=\lambda_{1}(\zeta)>2k-1$.

First we want to point out a
consequence of Theorem~\ref{theo}. It allows for improving the bound in \eqref{eq:dualw}
provided that $\lambda_{1}(\zeta)$ is sufficiently large.

\begin{theorem}  \label{neuko}
Let $k$ be a positive integer and $\zeta$ an irrational real number. In case of $w_{1}(\zeta)\geq k$, 
which is in particular true if $\lambda_{k}(\zeta)>1$, the equalities
\begin{equation}  \label{eq:zwischen}
\widehat{w}_{1}(\zeta)=1, \quad \widehat{w}_{2}(\zeta)=2,\quad \ldots,\quad \widehat{w}_{k}(\zeta)=k
\end{equation}
hold. In case of $k-1<w_{1}(\zeta)<k$, we have the inequalities
\begin{equation}  \label{eq:schwamm} 
k\leq \widehat{w}_{k}(\zeta)\leq \min\left\{\frac{w_{1}(\zeta)}{w_{1}(\zeta)-k+1},2k-1\right\}.
\end{equation}
\end{theorem}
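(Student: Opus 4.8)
The plan is to feed the two transference inequalities \eqref{eq:dualdazu} together with the new bound from Theorem~\ref{theo} into each other. First I would dispose of the algebraic case: if $\zeta$ is algebraic, then Roth's Theorem gives $\lambda_{1}(\zeta)=w_{1}(\zeta)=1$, so $w_{1}(\zeta)\geq k$ forces $k=1$, where \eqref{eq:zwischen} is just \eqref{eq:uniformity} restated via \eqref{eq:dualugk}; and in the range $k-1<w_{1}(\zeta)<k$ only $k=1$ occurs with $0<w_1(\zeta)<1$, impossible for irrational $\zeta$ by \eqref{eq:genauaso}, so actually the algebraic case contributes nothing new beyond $k=1$. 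Hence assume $\zeta$ transcendental (in particular not algebraic of degree $\leq k$), so that \eqref{eq:genauaso} and \eqref{eq:dualw} apply to every index $j\leq k$.

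For the first assertion, fix $j$ with $1\leq j\leq k$. The hypothesis $w_{1}(\zeta)\geq k\geq j$ gives $\lambda_{1}(\zeta)=w_{1}(\zeta)\geq j$ via \eqref{eq:dualugk}, hence $1/\lambda_{1}(\zeta)\leq 1/j$, and Theorem~\ref{theo} yields $\widehat{\lambda}_{j}(\zeta)\leq \max\{1/j,1/\lambda_{1}(\zeta)\}=1/j$. Combined with the Dirichlet lower bound $\widehat{\lambda}_{j}(\zeta)\geq 1/j$ from \eqref{eq:dirichlet} this gives $\widehat{\lambda}_{j}(\zeta)=1/j$, and then the equivalence \eqref{eq:kin2} produces $\widehat{w}_{j}(\zeta)=j$. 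Running $j$ from $1$ to $k$ gives the whole chain \eqref{eq:zwischen}. The parenthetical remark that $\lambda_{k}(\zeta)>1$ implies $w_{1}(\zeta)\geq k$ is exactly the observation recorded just before the theorem (from Corollary~\ref{korollar}, or from Theorem~\ref{spektrum}, $\lambda_{k}(\zeta)>1$ forces $\lambda_{1}(\zeta)=k\lambda_{k}(\zeta)+k-1>2k-1\geq k$).

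For the second assertion, assume $k-1<w_{1}(\zeta)<k$. The lower bound $\widehat{w}_{k}(\zeta)\geq k$ is \eqref{eq:genauaso}, and $\widehat{w}_{k}(\zeta)\leq 2k-1$ is \eqref{eq:dualw}. It remains to show $\widehat{w}_{k}(\zeta)\leq w_{1}(\zeta)/(w_{1}(\zeta)-k+1)$. Here $w_{1}(\zeta)=\lambda_{1}(\zeta)\in(k-1,k)$, so Theorem~\ref{theo} gives $\widehat{\lambda}_{k}(\zeta)\leq\max\{1/k,1/\lambda_{1}(\zeta)\}=1/\lambda_{1}(\zeta)=1/w_{1}(\zeta)$ (using $\lambda_{1}(\zeta)<k$). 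Now plug this into the left half of German's inequality \eqref{eq:dualdazu}: from $(\widehat{w}_{k}(\zeta)-1)/((k-1)\widehat{w}_{k}(\zeta))\leq\widehat{\lambda}_{k}(\zeta)\leq 1/w_{1}(\zeta)$ one rearranges (clearing the positive denominators $(k-1)\widehat{w}_{k}(\zeta)$ and $w_1(\zeta)$, legitimate since $k\geq 2$ and $w_1(\zeta)>k-1\geq 1>0$) to $w_{1}(\zeta)(\widehat{w}_{k}(\zeta)-1)\leq (k-1)\widehat{w}_{k}(\zeta)$, i.e. $\widehat{w}_{k}(\zeta)(w_{1}(\zeta)-k+1)\leq w_{1}(\zeta)$, and since $w_{1}(\zeta)-k+1>0$ this is $\widehat{w}_{k}(\zeta)\leq w_{1}(\zeta)/(w_{1}(\zeta)-k+1)$, as desired.

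I do not expect a genuine obstacle: the whole argument is a bookkeeping exercise chaining Theorem~\ref{theo} through \eqref{eq:kin2} and \eqref{eq:dualdazu}. The only points requiring a little care are (i) checking the sign conditions so that the rearrangement of \eqref{eq:dualdazu} preserves the inequality direction — in particular that $w_{1}(\zeta)-k+1>0$, which is guaranteed precisely by the hypothesis $w_{1}(\zeta)>k-1$ — and (ii) making sure in the first assertion that one has licence to invoke \eqref{eq:dualw} for the \emph{uniform} constants of each index $j\leq k$, which needs $\zeta$ not algebraic of degree $\leq k$; this is why the transcendence reduction is carried out at the outset.
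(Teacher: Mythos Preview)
Your proposal is correct and follows essentially the same route as the paper: apply Theorem~\ref{theo} to obtain $\widehat{\lambda}_{j}(\zeta)=1/j$ and then invoke \eqref{eq:kin2} for the first assertion, and for the second assertion combine Theorem~\ref{theo} with the left inequality of \eqref{eq:dualdazu} and \eqref{eq:dualw}, after using Roth's Theorem to ensure transcendence. One small slip in your closing commentary: \eqref{eq:dualw} is never invoked in the proof of \eqref{eq:zwischen}, only in \eqref{eq:schwamm}; the transcendence reduction is needed solely for the $2k-1$ bound in the second assertion.
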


\begin{proof}
Note that $w_{1}(\zeta)=\lambda_{1}(\zeta)$ by \eqref{eq:dualugk}. Thus Theorem~\ref{theo} implies
$\widehat{\lambda}_{j}(\zeta)=1/j$ for $1\leq j\leq k$, and \eqref{eq:zwischen} follows from \eqref{eq:kin2}.
Concerning the non-trivial right hand side inequality in
\eqref{eq:schwamm}, note that by assumption and Roth's~Theorem $\zeta$ is transcendental, 
such that the upper bound $2k-1$ is obtainded from \eqref{eq:dualw}. The remaining upper bound in \eqref{eq:schwamm}
follows from \eqref{eq:dualdazu} and Theorem~\ref{theo} via
\[
\frac{\widehat{w}_{k}(\zeta)-1}{(k-1)\widehat{w}_{k}(\zeta)}\leq \widehat{\lambda}_{k}(\zeta)\leq 
\max\left\{\frac{1}{k},\frac{1}{\lambda_{1}(\zeta)}\right\}=
\max\left\{\frac{1}{k},\frac{1}{w_{1}(\zeta)}\right\}=\frac{1}{w_{1}(\zeta)}
\]
by elementary rearrangements.
\end{proof}

One readily sees \eqref{eq:schwamm} is an improvement to \eqref{eq:dualw} 
in case of $w_{1}(\zeta)>k-1/2$.

In combination with certain results established before, Theorem~\ref{neuko} allows for finally determining
{\em all} the classical approximation constants introduced in the Sections~\ref{intro} and~\ref{dualproblem} 
for Liouville numbers.

\begin{corollary}
Let $\zeta$ be a Liouville number, i.e., a real number which satisfies $\lambda_{1}(\zeta)=\infty$. 
Then for any $k\geq 1$ we have
\[
\lambda_{k}(\zeta)=\infty, \quad \widehat{\lambda}_{k}(\zeta)=\frac{1}{k},
\quad w_{k}(\zeta)=\infty, \quad \widehat{w}_{k}(\zeta)=k.
\]
\end{corollary}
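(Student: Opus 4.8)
The plan is to deduce all four equalities by assembling results already established, observing that a Liouville number is the most favourable possible input: the hypothesis $\lambda_1(\zeta)=\infty$ makes every relevant assumption (in particular $w_1(\zeta)\geq k$ for Theorem~\ref{neuko}, and $\lambda_k(\zeta)>1$) automatically satisfied, and $\zeta$ is transcendental by Liouville's Theorem (as recalled in Section~\ref{rothw}), hence not algebraic of degree $\leq k$, so that the Dirichlet bounds \eqref{eq:dirichlet}, \eqref{eq:genauaso} and the transference estimates \eqref{eq:kinschin}, \eqref{eq:dualdazu} are all available.

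First I would record the asymptotic exponents. The inequality \eqref{eq:holds} with $n=1$ gives $\lambda_k(\zeta)\geq(\lambda_1(\zeta)-k+1)/k=\infty$, so $\lambda_k(\zeta)=\infty$. For the dual side, the chain \eqref{eq:dualugk} gives $w_1(\zeta)=\lambda_1(\zeta)=\infty$ together with $w_1(\zeta)\leq w_2(\zeta)\leq\cdots$, whence $w_k(\zeta)=\infty$ for every $k$; alternatively one may feed $\lambda_k(\zeta)=\infty$ into Khintchine's transference \eqref{eq:kinschin}, which yields $w_k(\zeta)\geq k\lambda_k(\zeta)+k-1=\infty$.

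Next I would treat the uniform exponents, which is where Theorem~\ref{theo} enters. Since $\lambda_1(\zeta)=\infty$, Theorem~\ref{theo} gives $\widehat{\lambda}_k(\zeta)\leq\max\{1/k,1/\lambda_1(\zeta)\}=1/k$, while Dirichlet's box principle \eqref{eq:dirichlet} gives the reverse bound $\widehat{\lambda}_k(\zeta)\geq 1/k$; hence $\widehat{\lambda}_k(\zeta)=1/k$. For $\widehat{w}_k(\zeta)$ one can then apply the equivalence \eqref{eq:kin2}, which turns $\widehat{\lambda}_k(\zeta)=1/k$ into $\widehat{w}_k(\zeta)=k$; equivalently, one may quote \eqref{eq:zwischen} of Theorem~\ref{neuko} directly, since $w_1(\zeta)=\lambda_1(\zeta)=\infty\geq k$.

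There is essentially no obstacle here: the corollary is a collation of Theorems~\ref{spektrum}, \ref{theo} and~\ref{neuko} with the classical inequalities. The only point worth a sentence of care is confirming that $\zeta$ is transcendental (so that the hypotheses ``not algebraic of degree $\leq k$'' and the inputs of Theorem~\ref{neuko} are met), and this is exactly Liouville's Theorem as noted in the excerpt.
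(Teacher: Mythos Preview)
Your proposal is correct and follows essentially the same route as the paper: both derive $\lambda_k(\zeta)=\infty$ from \eqref{eq:holds}, $w_k(\zeta)=\infty$ from \eqref{eq:dualugk}, $\widehat{\lambda}_k(\zeta)=1/k$ from Theorem~\ref{theo} together with \eqref{eq:dirichlet}, and $\widehat{w}_k(\zeta)=k$ from either \eqref{eq:kin2} or \eqref{eq:zwischen} of Theorem~\ref{neuko}. The only cosmetic difference is that the paper cites Corollary~2 of~\cite{bug} for $\lambda_k(\zeta)=\infty$, whereas you invoke \eqref{eq:holds} directly; these amount to the same thing.
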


\begin{proof}
The assertion on $\widehat{w}_{k}(\zeta)$ follows from \eqref{eq:zwischen}.
The claim on $\lambda_{k}(\zeta)$ was established in Corollary~2 in~\cite{bug},
as carried out preceding Corollary~\ref{bestens}. The assertion
on $\widehat{\lambda}_{k}(\zeta)$ follows either from Theorem~\ref{theo}
or \eqref{eq:kin2}, and the assertion on $w_{k}(\zeta)$ is due to \eqref{eq:dualugk}.
\end{proof} 

Recall $\lambda_{k}(\zeta)=\infty$ for any Liouville number $\zeta$ and all $k\geq 1$, 
as already mentioned in Section~\ref{intro}.
In particular, for any $k\geq 1$ and any parameter $\theta>0$ the estimate
$\max_{1\leq j\leq k}\Vert \zeta^{j} x\Vert< \theta x^{-1}$
has arbitrarily large integer solutions $x$.
Together with the case $T=1$ in Corollary~\ref{korollar},
we stem a new criterion for a number to be a Liouville number. 

\begin{theorem}  \label{kommtschonoch}
An irrational real number $\zeta$ is a Liouville number
if and only if for any positive integer $k$, the estimate
\[
\max_{1\leq j\leq k} \Vert \zeta^{j} x\Vert < C_{0}(k,\zeta)\cdot x^{-1}
\]
with $C_{0}$ defined in Lemma~{\upshape\ref{lemma2}}, has an integer solution $x=x(k,\zeta)>0$.
\end{theorem}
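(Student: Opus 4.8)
\textbf{Proof proposal for Theorem~\ref{kommtschonoch}.}

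The plan is to prove the two implications separately, both of which are essentially already contained in the material developed above. For the ``only if'' direction, suppose $\zeta$ is a Liouville number, so $\lambda_{1}(\zeta)=\infty$ and hence, by \eqref{eq:holds} applied with $n=1$, $\lambda_{k}(\zeta)=\infty$ for every $k\geq 1$. Fix $k$. Since $\lambda_{k}(\zeta)=\infty > 1$, the inequality $\max_{1\leq j\leq k}\Vert \zeta^{j}x\Vert \leq x^{-T}$ has arbitrarily large integer solutions $x$ for \emph{every} $T$; choosing $T$ large enough (say $T=2$) and then taking $x$ large enough that $x^{-2} < C_{0}(k,\zeta)\cdot x^{-1}$, i.e. $x > 1/C_{0}(k,\zeta)$, we obtain an integer $x=x(k,\zeta)>0$ with $\max_{1\leq j\leq k}\Vert \zeta^{j}x\Vert < C_{0}(k,\zeta)\cdot x^{-1}$, as required. (This is exactly the remark made in the paragraph preceding the theorem.)

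For the ``if'' direction, assume that for every positive integer $k$ there is an integer $x=x_{k}>0$ solving $\max_{1\leq j\leq k}\Vert \zeta^{j}x\Vert < C_{0}(k,\zeta)\cdot x^{-1}$. I would apply the $T=1$ case of Corollary~\ref{korollar} to each such $x_{k}$: it produces integers $x_{0}=x_{0}(k)$, $y_{0}=y_{0}(k)$, $M_{0}=M_{0}(k)$ as in Lemma~\ref{help} satisfying \eqref{eq:allesgilt} with $T=1$, in particular
\[
\left\vert \zeta x_{0}(k) - y_{0}(k)\right\vert \leq x_{0}(k)^{-k+1}.
\]
The key observation is that $x_{0}(k)\to\infty$ as $k\to\infty$: indeed $x_{0}(k)\geq 2$ since $\zeta$ is irrational (a denominator $x_{0}=1$ would force $\Vert\zeta x_{0}\Vert \geq$ a fixed positive constant, contradicting the bound for large $k$ — one should check that $x_{0}(k)$ cannot stay bounded, using that only finitely many convergent denominators lie below any fixed bound and each gives a fixed value of $\Vert\zeta x_{0}\Vert$, whereas $x_{0}(k)^{-k+1}\to 0$). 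Hence for each fixed exponent $\eta$, the estimate $\vert \zeta x_{0}(k)-y_{0}(k)\vert \leq x_{0}(k)^{-k+1} \leq x_{0}(k)^{-\eta}$ holds for all sufficiently large $k$ with $x_{0}(k)$ arbitrarily large, which gives $\lambda_{1}(\zeta)\geq \eta$. Since $\eta$ was arbitrary, $\lambda_{1}(\zeta)=\infty$, i.e. $\zeta$ is a Liouville number.

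The main obstacle, and the only genuinely non-routine point, is justifying $x_{0}(k)\to\infty$; everything else is bookkeeping. The cleanest way is probably to argue by contradiction: if $x_{0}(k)\leq B$ for infinitely many $k$, then since there are only finitely many positive integers $\leq B$, some value $x_{0}(k)=v$ recurs for infinitely many $k$, and then $\Vert \zeta v\Vert = \vert \zeta v - y_{0}(k)\vert \leq v^{-k+1}\to 0$, forcing $\Vert\zeta v\Vert=0$, i.e. $\zeta v\in\mathbb{Z}$, contradicting irrationality of $\zeta$. Alternatively one notes directly from \eqref{eq:allesgilt} that $x_{k}\geq x_{0}(k)^{k}$ and $\vert\zeta x_{0}(k)-y_{0}(k)\vert\leq x_{0}(k)^{-k+1}$, and feeds these two facts into the definition of $\lambda_{1}$ after the boundedness obstruction is removed.
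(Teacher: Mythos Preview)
Your proposal is correct and follows exactly the paper's approach: the ``only if'' direction uses $\lambda_k(\zeta)=\infty$ (via \eqref{eq:holds}) to produce solutions, and the ``if'' direction invokes the $T=1$ case of Corollary~\ref{korollar}, which is all the paper says. Two small remarks. First, the exponent you extract from \eqref{eq:allesgilt} with $T=1$ should be $-2k+1$, not $-k+1$ (one has $-kT-k+1=-2k+1$); your weaker inequality is still a valid consequence and suffices. Second, your pigeonhole argument for $x_0(k)\to\infty$ does not literally cover the case $v=1$, since $1^{-k+1}=1$ does not tend to $0$; your parenthetical exclusion of $x_0=1$ is correct but could be made explicit by noting that $C_0(k,\zeta)=(1/2)k^{-1}(1+|\zeta|)^{1-k}\to 0$, so $\Vert\zeta x_k\Vert<C_0(k,\zeta)$ already forces $x_k\to\infty$, whence $x_0(k)$ (the largest convergent denominator $\leq x_k$) also tends to infinity. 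The paper leaves this entire step to the reader, so your explicit treatment is a genuine addition.
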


Finally, in the case $\lambda_{k}(\zeta)>1$, Theorem~\ref{spektrum} implies the right hand side inequality 
in Khintchine's transference principle \eqref{eq:kinschin} and a criterion for equality. 

\begin{theorem}
Let $k$ be a positive integer and $\zeta$ be a real number with $\lambda_{k}(\zeta)>1$, 
or equivalently $w_{1}(\zeta)=\lambda_{1}(\zeta)>2k-1$.
We have equality in the right hand side inequality of {\upshape(\ref{eq:kinschin})} if and only if
\[
w_{1}(\zeta)=w_{2}(\zeta)=\cdots=w_{k}(\zeta).
\]
\end{theorem}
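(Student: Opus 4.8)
The plan is to translate everything into the language of $w_k$ and $\lambda_k$ and then exploit the rigidity provided by Theorem~\ref{spektrum}. First I would record the standing hypothesis $\lambda_k(\zeta)>1$, which by Corollary~\ref{korollar} (see the remark in Section~\ref{rothw}) is equivalent to $w_1(\zeta)=\lambda_1(\zeta)>2k-1$; in particular $\zeta$ is transcendental by Roth's Theorem, so all the transference inequalities in Section~\ref{dualproblem} are available. Theorem~\ref{spektrum} gives the exact value $\lambda_k(\zeta)=(\lambda_1(\zeta)-k+1)/k=(w_1(\zeta)-k+1)/k$. Plugging this into the right-hand inequality of Khintchine's transference \eqref{eq:kinschin}, namely $\lambda_k(\zeta)\le (w_k(\zeta)-k+1)/k$, and using the chain \eqref{eq:dualugk} that forces $w_k(\zeta)\ge w_1(\zeta)$, I would show that equality $\lambda_k(\zeta)=(w_k(\zeta)-k+1)/k$ holds precisely when $w_k(\zeta)=w_1(\zeta)$.

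More concretely: the inequality $\lambda_k(\zeta)\le (w_k(\zeta)-k+1)/k$ rearranges to $w_k(\zeta)\ge k\lambda_k(\zeta)+k-1=w_1(\zeta)$, which is the monotonicity already known from \eqref{eq:dualugk}; hence equality in the right half of \eqref{eq:kinschin} is equivalent to $w_k(\zeta)=w_1(\zeta)$. Since $w_1(\zeta)\le w_2(\zeta)\le\cdots\le w_k(\zeta)$ by \eqref{eq:dualugk}, the single equality $w_k(\zeta)=w_1(\zeta)$ collapses the whole chain, giving $w_1(\zeta)=w_2(\zeta)=\cdots=w_k(\zeta)$. That yields the ``only if'' direction; the ``if'' direction is immediate by reading the same computation backwards — assuming $w_1(\zeta)=\cdots=w_k(\zeta)$ and invoking Theorem~\ref{spektrum} shows $\lambda_k(\zeta)=(w_1(\zeta)-k+1)/k=(w_k(\zeta)-k+1)/k$, which is exactly equality in the right-hand bound of \eqref{eq:kinschin}.

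The only genuinely nontrivial input is Theorem~\ref{spektrum}, which supplies the otherwise-missing lower bound on $\lambda_k(\zeta)$ that matches Khintchine's upper bound; everything else is a short algebraic manipulation together with the monotonicity \eqref{eq:dualugk}. I do not anticipate a real obstacle here — the care needed is just to keep track of which direction of each inequality is being used and to note that the hypothesis $\lambda_k(\zeta)>1$ (equivalently $w_1(\zeta)>2k-1$) is exactly what is needed both to apply Theorem~\ref{spektrum} and, via Roth's Theorem, to know $\zeta$ is transcendental so that the classical transference estimates apply without the algebraic-degree caveat. A clean write-up would present the chain of equivalences $w_k(\zeta)=w_1(\zeta)\iff w_k(\zeta)=k\lambda_k(\zeta)+k-1\iff \lambda_k(\zeta)=(w_k(\zeta)-k+1)/k$ and then finish with the telescoping of \eqref{eq:dualugk}.
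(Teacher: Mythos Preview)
Your proposal is correct and follows essentially the same approach as the paper: apply Theorem~\ref{spektrum} to get $\lambda_k(\zeta)=(w_1(\zeta)-k+1)/k$, compare this with the Khintchine upper bound $(w_k(\zeta)-k+1)/k$, and observe that equality is equivalent to $w_k(\zeta)=w_1(\zeta)$, which by the monotonicity \eqref{eq:dualugk} collapses the whole chain. The paper's own proof is just a terser version of exactly this argument.
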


\begin{proof}
In view of \eqref{eq:dualugk} and Theorem~\ref{spektrum}, for $\lambda_{k}(\zeta)>1$ indeed
\[
\lambda_{k}(\zeta)= \frac{\lambda_{1}(\zeta)-k+1}{k}=\frac{w_{1}(\zeta)-k+1}{k}\leq \frac{w_{k}(\zeta)-k+1}{k}.
\]
Clearly the given equivalence is valid.
\end{proof}

The ''only if'' statement is the contribution of Theorem~\ref{spektrum}, the ''if'' part 
can be inferred from \eqref{eq:holds} and \eqref{eq:kinschin} without any restriction on $\lambda_{k}(\zeta)$ 
as already implicitly carried out in the proof of~\cite[Theorem~2]{bug}.

\vspace{1cm}
The author warmly thanks Yann Bugeaud and the anonymous referee for remarks that 
helped me to improve the original version, in particular the presentation.

\end{document}